\pgfplotsset{compat=1.14}
\title{Infinite cycles in the interchange process in five dimensions}
\date{}
\author{Dor Elboim}
\address{Dor Elboim\hfill\break
    Department of Mathematics,
    Princeton University,
    New Jersey, United States.}
\email{delboim@princeton.edu}
\author{Allan Sly}
\address{Allan Sly\hfill\break
    Department of Mathematics,
    Princeton University,
    New Jersey, United States.}
\email{allansly@princeton.edu}
\newtheorem{mainthm}{Theorem}
\newtheorem{thm}{Theorem}[section]
\newtheorem{lem}[thm]{Lemma}  
\newtheorem{prop}[thm]{Proposition}
\newtheorem{cor}[thm]{Corollary}
\newtheorem{definition}[thm]{Definition}
\newtheorem*{conjecture}{Conjecture}
\newtheorem{remark}[thm]{Remark}
\newtheorem{claim}[thm]{Claim}
\renewcommand{\P}{\mathbb{P}}
\numberwithin{equation}{section}
\def\P{\mathbb{P}}
\def\E{\mathbb{E}}
\def\R{\mathbb{R}}
\def\Z{\mathbb{Z}}
\def\cA{\mathcal{A}}
\def\cB{\mathcal{B}}
\def\cC{\mathcal{C}}
\def\cD{\mathcal{D}}
\def\cF{\mathcal{F}}
\def\cG{\mathcal{G}}
\def\cH{\mathcal{H}}
\def\fP{\mathfrak{P}}
\def\fR{\mathfrak{R}}
\begin{document}

\maketitle

\begin{abstract}
In the interchange process on a graph $G=(V,E)$, distinguished particles are placed on the vertices of $G$ with independent Poisson clocks on the edges. When the clock of an edge rings, the two particles on the two sides of the edge interchange. In this way, a random permutation $\pi _\beta:V\to V$ is formed for any time $\beta >0$. One of the main objects of study is the cycle structure of the random permutation and the emergence of long cycles.

We prove the existence of infinite cycles in the interchange process on $\mathbb Z ^d$ for all dimensions $d\ge 5$ and all large $\beta $, establishing a conjecture of Bálint Tóth from 1993 in these dimensions.

In our proof, we study a self-interacting random walk called the cyclic time random walk.  Using a multiscale induction we prove that it is diffusive and can be coupled with Brownian motion. One of the key ideas in the proof is establishing a local escape property which shows that the walk will quickly escape when it is entangled in its history in complicated ways.
\end{abstract}

\section{Introduction}

The interchange process (sometimes also called the random stirring model) is a model of random permutations obtained by composing random transposition along the edges of a graph. Given a graph $G=(V,E)$ the model is defined as follows. On each vertex of the graph there is a distinguished particle and each edge of the graph is endowed with an independent Poisson clock of rate $1$. The process evolves according to the following simple rule: when an edge rings, the two particles on the two sides of the edge swap. In this way, a random permutation $\pi _\beta  :V\to V$ is formed for any time $\beta >0$. 

The study of the cycle structure of the permutation $\pi _\beta $ is of great interest. This was initiated by Bálint Tóth \cite{toth1993improved} (and independently by Aizenman and Nachtergaele \cite{aizenman1994geometric} and Powers \cite{powers1976heisenberg}) who observed the connection between long cycles in the interchange process on $\mathbb Z ^d$ and a phase transition in the quantum Heisenberg ferromagnet. Tóth's conjecture is as follows.

\begin{conjecture}[Tóth 1993]
    Let $\pi _\beta $ be the interchange permutation on $\mathbb Z ^d$ at time $\beta$. 
    \begin{enumerate}
        \item 
        When $d=2$, for all $\beta >0$, the permutation $\pi _\beta $ contains only finite cycles almost surely. 
        \item 
        When $d\ge 3$, for all $\beta >\beta _c$, the permutation $\pi _\beta $ contains infinite cycles almost surely.
    \end{enumerate}
\end{conjecture}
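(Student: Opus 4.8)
\bigskip
\noindent\textbf{Proof proposal.}
I will target the infinite-cycle statement, part (2), in high dimensions; the finite-cycle statement in $d=2$ and the intermediate cases $d=3,4$ seem to require substantially different ideas and I do not address them here. The plan is to pass to the standard space--time (``bar'') representation: over each vertex of $\Z^d$ place a copy of the time circle $\R/\beta\Z$, and for each ring of the Poisson clock on an edge $\{x,y\}$ at time $t$ draw a horizontal bar joining the two circles at height $t$. A trajectory that moves up in cyclic time at unit speed and jumps across every bar it meets traces out exactly the cycle of the interchange permutation through its starting vertex; call it the \emph{cyclic time random walk} $X$. Then the cycle through the origin is infinite precisely when $X$ never returns to its starting point, so it is enough to prove that for $d\ge 5$ and $\beta$ large $X$ has positive probability of never returning, and then to upgrade this to an almost sure statement.

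The core of the argument is to show that $X$ is \emph{diffusive}, and in fact can be coupled, at every scale, with a Brownian motion of an appropriate diffusion constant. The difficulty is that, although $X$ at each step only examines bars it has not yet revealed, it is badly \emph{self-interacting}: the bar configuration is frozen once and for all, and because time is a circle even a perfectly diffusive walk wraps around after time $\beta$ and must re-enter space--time regions it has partly explored, where it inherits rigid constraints (a bar crossed on an earlier pass is crossed back on a later one, and so on). A naive appeal to transience of Brownian motion in $d\ge 3$ therefore fails. I would instead run a multiscale induction over scales $L_k$ with $L_{k+1}$ a fixed power of $L_k$: assuming that a run of $X$ of length $L_k$ is, outside an event of probability at most $\varepsilon_k$, within a small relative error of Brownian motion, I would cut a run of length $L_{k+1}$ into $\sim L_{k+1}/L_k$ consecutive blocks of length $L_k$, apply the inductive coupling block by block, and argue that, because the coupled walk has already spread out on scale $\sqrt{L_k}$ in each block, it only rarely returns to the space--time territory used by earlier blocks, so the blocks behave almost independently. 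Controlling the sum of the resulting corrections over all scales is where $d\ge 5$ enters, and taking $\beta$ large ensures enough bar crossings within a single time period for the base case of the induction to start.

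The \textbf{main obstacle}, and the step I expect to be genuinely new, is handling the rare events on which $X$ does become ``entangled'' with its own history in a complicated way. An isolated revisited bar is harmless, but chains of revisited bars can in principle force the walk to retrace long stretches of its past and thereby destroy the coupling. The key input is the \emph{local escape property}: one must show that whenever $X$ is in such an entangled configuration it escapes within a controlled number of steps -- the constraints remain localized in space and time and the walk quickly reaches fresh randomness -- so that the total probabilistic cost of entanglements is summable over scales. Making this estimate quantitative, uniformly over the (arbitrarily complicated) possible histories, and feeding it back into the inductive step so that $\varepsilon_{k+1}$ does not blow up, is the technical heart of the whole argument.

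Granting the multiscale coupling, the conclusion is routine. Diffusivity together with the coupling with Brownian motion shows that with positive probability the spatial coordinate of $X$ drifts to infinity while $X$ loops the time circle infinitely often; in particular $\pi_\beta^{\,n}(\text{origin})$ never returns to the origin, so the origin lies in an infinite cycle with positive probability. Translation invariance and ergodicity of the Poisson clock configuration give a $0$--$1$ law for the event that $\pi_\beta$ has an infinite cycle, so that event has probability $1$, which is part (2) of the conjecture for all $d\ge 5$ and all large $\beta$.
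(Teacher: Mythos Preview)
Your proposal correctly scopes the problem: the statement is a conjecture, and the paper, like you, proves only part~(2) for $d\ge 5$ and all sufficiently large $\beta$ (its Theorem~\ref{t:main.thm}). Your outline---cyclic time random walk, multiscale induction with a Brownian coupling at each scale, the local escape property as the technical heart, and the $0$--$1$ law to finish---is essentially the paper's approach. The one substantive point your sketch understates is that the induction must carry more than the Brownian coupling alone: the paper propagates five hypotheses simultaneously (a \emph{relaxed path} property controlling the density of the range in dyadic blocks, pointwise transition probability bounds, displacement tails, the Brownian coupling, and a \emph{pair proximity} property for two independent cyclic walks), and it is the relaxed-path and pair-proximity estimates, not the coupling by itself, that make precise your ``blocks behave almost independently'' step and that feed the escape algorithm.
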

Since the work of Tóth, the cycle structure of $\pi _\beta $ has been studied extensively for various graphs. The existence of long cycles has been shown on the complete graph, trees, the hypercube, the Hamming graph and random regular graphs. However, proving similar results for $\mathbb Z ^d$ was out of reach. In this paper we prove the existence of infinite cycles in dimensions 5 and higher.
\begin{mainthm}\label{t:main.thm}
    For all $d\ge 5$ and all $\beta $ sufficiently large, the permutation $\pi _\beta =\pi _\beta (\mathbb Z ^d)$ contains infinite cycles almost surely.
\end{mainthm}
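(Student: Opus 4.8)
The overarching strategy is to reduce the existence of infinite cycles to a statement about a single self-interacting walk — the *cyclic time random walk* (CTRW) — being diffusive, and then to establish that diffusivity by a multiscale induction that couples the walk with Brownian motion. Let me think through how I would structure this.

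First: what is the cyclic time random walk and why does it govern cycles? In the interchange process, following a particle means following a trajectory through space-time that, at each ring of a Poisson clock on an incident edge, jumps to the other endpoint. The cycle containing a vertex $v$ is infinite iff the trajectory started from $v$ never returns. The subtlety — and this is the whole game — is that after the particle moves along an edge at time $t$, if it later comes back near that edge, the clock configuration it sees is *correlated with its own past*: it has "used up" certain rings. So the natural object is a walk on $\mathbb{Z}^d$ that carries with it a record of which space-time edges it has traversed, and whose future increments are conditioned on that record. This is self-interacting. The key point is that in $d\geq 5$ a simple random walk is transient with summable return probabilities, so generically the walk wanders off and the self-interaction is a perturbation — but one must control the rare events where the walk gets "entangled in its history."

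Second: the multiscale scheme. I would set up a sequence of scales $L_k$ (say $L_{k+1} = L_k^{1+o(1)}$ or geometric with a slowly growing exponent) and formulate an inductive hypothesis at scale $L_k$: with probability $\geq 1 - p_k$ (with $p_k$ summable, or at least $p_k\to 0$ fast), the CTRW restricted to a box of side $L_k$, run for the appropriate diffusive time $\sim L_k^2$, can be coupled with a Brownian motion so that the sup-norm discrepancy is $\leq L_k^{1-\delta}$, AND the walk has not created any "bad entanglement" in that box. The induction step: decompose a scale-$L_{k+1}$ trajectory into $\sim (L_{k+1}/L_k)^2$ pieces of scale $L_k$; on each piece apply the inductive hypothesis; then one must (a) glue the Brownian couplings (a martingale/Skorokhod argument, controlling the accumulated error), and (b) control the probability that, across the many scale-$L_k$ pieces, the walk revisits a region it has already contaminated in a way that breaks the conditional-independence used in step (a). Here is where the \emph{local escape property} advertised in the abstract enters: one shows that whenever the walk finds itself in a complicated self-entangled local configuration, it escapes that neighborhood in a controlled (say, diffusive or faster) time, so entanglements are transient and their cumulative effect is summable over scales. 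Establishing this local escape estimate — a quantitative statement that the worst-case conditional law still pushes the walk out — is, I expect, the main obstacle: it requires understanding the structure of the history-dependent clock configuration well enough to lower-bound the escape speed uniformly over all "reasonable" histories, and it is where the dimension $d\geq 5$ (rather than $d\geq 3$) is presumably needed, because one needs enough transience to absorb the combinatorial cost of the possible entanglement patterns.

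Third: from diffusivity to infinite cycles. Once the CTRW is known to be diffusive and Brownian-coupled up to all scales, transience of the coupled Brownian motion in $d\geq 3$ (a fortiori $d\geq 5$) implies the CTRW is transient with positive probability, hence the forward trajectory from the origin fails to return with positive probability. A $0$–$1$ law / ergodicity argument (translation invariance of $\mathbb{Z}^d$ together with the structure of the interchange process) then upgrades "positive probability of an infinite cycle at the origin" to "infinite cycles exist almost surely." One also needs $\beta$ large so that the Poisson intensity is high enough that the walk actually moves ballistically-in-number-of-steps before the diffusive error swamps it — equivalently, so that the local escape estimate has room to work. Assembling these three pieces — (i) the definition and reduction, (ii) the multiscale induction with the local escape lemma as its engine, (iii) the transience-to-a.s.-infinite-cycles upgrade — yields Theorem~\ref{t:main.thm}; the hard technical heart is unquestionably the local escape property inside the inductive step.
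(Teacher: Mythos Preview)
Your outline captures the paper's strategy well: the cyclic-time random walk, a multiscale induction with a local escape mechanism (what the paper calls the \emph{escape algorithm}) as its engine, the role of $d\ge 5$ in making self-intersections summable, and the final $0$--$1$ law upgrade are all present and correctly identified.

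There is one genuine imprecision in your step~(iii). You write that Brownian transience, via the coupling, yields transience of the CTRW. But the coupling error in the inductive hypothesis is $t^{2/5}$, and a $d$-dimensional Brownian motion started at the origin satisfies $\mathbb{P}(|B(t)|\le t^{2/5})\asymp t^{-d/10}$; for $d=5$ this is $t^{-1/2}$, which is \emph{not} summable over the return times $t=k\beta$. So the Brownian coupling alone does not close the argument. The paper instead carries a separate \emph{transition probability} bound $\mathbb{P}(W(t)=u)\le t^{-d/2+0.1}$ as part of the inductive hypothesis (established by combining the Brownian approximation at one scale with the transition estimate at a smaller scale), and then finishes via
\[
\mathbb{P}(\tau_{\mathrm{reg}}<\infty)\le \sum_{k\ge 1}\mathbb{P}(W(k\beta)=0)\le \sum_{k\ge 1}(k\beta)^{-d/2+0.1}<1
\]
for large $\beta$. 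This local estimate is exactly the missing ingredient in your reduction; once you add it to the inductive package your outline matches the paper.
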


In the proof we study the cyclic random walk introduced in \cite{toth1993improved}. This is the walk obtained by exposing the cycle containing the origin in $\pi _\beta $. We use a multiscale inductive argument to show that this walk is diffusive. The method shows that for large $\beta $, the probability that the cycle containing the origin is finite is approximately the probability that a simple random walk will return to the origin at a cyclic time $k\beta $ with $k\in \mathbb N$. That is, we show that this probability is roughly $\beta ^{-d/2}$.

\subsection{Previous works}

\subsubsection{The cycle structure of the interchange}

It is of great interest to study the lengths of the cycles in the random permutation $\pi _\beta =\pi _\beta (G)$ for various graphs $G$ and in particular to prove the emergence of long cycles. This was done on the complete graph by Berestycki and Durrett~\cite{berestycki2006phase} and later refined by Schramm~\cite{schramm2005compositions} who proved the convergence of the cycle lengths to the Poisson Dirichlet distribution. The proof was later simplified and generalized by Berestycki \cite{berestycki2011emergence}. Berestycki and Kozma found yet another proof of the result using representation theory \cite{berestycki2012cycle}.

The existence of infinite cycles was shown on the infinite $d$ regular tree with $d\ge 5$ by Angel \cite{angel2003random}. This was later improved by Hammond \cite{hammond2012infinite} for a wider class of trees including the infinite binary tree. In a subsequent paper \cite{hammond2015sharp}, Hammond also proved the existence of a critical time for the $d$ regular tree when $d$ is sufficiently large. 
There are numerous other results showing the emergence of long cycles on the hypercube \cite{kotecky2016random,hermon2021interchange}, on Hamming graphs \cite{adamczak2021phase} and on random regular graphs \cite{poudevigne2022macroscopic}.
 Finally,  Alon and Kozma~\cite{alon2013probability} developed a formula for the probability of the (very rare) event that the permutation contains only one cycle in terms of the eigenvalues of the graph $G$. This result is proved using representation theory and it holds for all finite graphs. 

\subsubsection{Mixing time and spectral gap}
When $G$ is the complete graph, the interchange process is the random transposition shuffle of a deck of cards. At each step one chooses two cards uniformly at random from the deck and transposes them. In one of the founding results in the theory of Markov chain mixing, Diaconis and Shahshahani \cite{diaconis1981generating} showed that the deck will be mixed after $\frac{1}{2}n \log n$ random transpositions. This result is one of the first examples of the cutoff phenomenon in Markov chains and introduced the use of representation theory in the study of random walks on groups. 

Since the work of Diaconis and Shahshahani, the mixing time and spectral gap of the interchange process have been studied extensively. One notable result is the proof of Aldous conjecture by  Caputo, Liggett and Richthammer \cite{caputo2010proof}. The conjecture (now a theorem) says that, on any graph, the spectral gap of the interchange process is equal to the spectral gap of the simple random walk on the graph.
We refer the reader to the following results on the spectral gap \cite{flatto1985random,cesi2010eigenvalues,handjani1996rate,koma1997spectral,morris2008spectral,starr2011asymptotics,dieker2010interlacings} and the mixing time \cite{diaconis1988group,wilson2004mixing,aldous1995reversible,oliveira2013mixing,jonasson2012mixing,lee1998logarithmic,yau1997logarithmic,diaconis1993comparison,alon2020comparing,lacoin2016mixing,hermon2021interchange} of the interchange process on various graphs. See also \cite{subag2013lower,bernstein2019cutoff,nestoridi2021mixing,berestycki2011mixing} for similar mixing results on different variants of the process.

\subsection{Extensions and open problems}

\subsubsection{The finite volume case}

Consider the interchange process on the torus graph $\Lambda _L:=[1,\dots ,L)^d$. Using the methods of this paper, it is not hard to show that when $d\ge 5$, and $L\to \infty $, there are polynomially long cycles in $\pi _\beta (\Lambda _L)$ when $\beta $ is large. However, there are much more accurate predictions for the cycle lengths in this case. Tóth conjectured that macroscopic cycles are formed in the supercritical regime when $\beta >\beta _c$. These are cycles with lengths proportional to the volume of the torus $L^d$. Moreover, it is expected that the longest cycles will obey the so called Poisson-Dirichlet law. More precisely, for all $d\ge 3$ and all $\beta >\beta _c$ there exists a fixed fraction $0<\alpha =\alpha _\beta <1 $ such that roughly $\alpha L^d$ of the particles belong to a macroscopic cycle and roughly $(1-\alpha )L^d$ of the particle are in cycles of length $O(1)$ as $L\to \infty $. Letting $\ell _1 ,\ell _2, \dots $ be the lengths of the longest cycle in $\pi _\beta $, second longest cycle and so on, it is conjectured that
\begin{equation}
    \Big( \frac{\ell _1}{\alpha L^d}, \frac{\ell _2}{\alpha L^d}, \dots  \Big) \overset{d}{\longrightarrow } \text{PD}(1), \quad L\to \infty,
\end{equation}
where $\text{PD}(1)$ is the Poisson Dirichlet distribution with parameter $1$. See \cite{elboim2019limit} for more details and the exact definition of the Poisson Dirichlet distribution.

Such results were obtained by Schramm on the complete graph \cite{schramm2005compositions}. The proof of \cite{schramm2005compositions} contains two parts.  In the first part, it is shown that macroscopic cycles emerge at the right time. In the second part, the author uses a coupling argument to show that once long cycles are formed they quickly converge to the Poisson-Dirichlet distribution. It is possible that using a combination of the methods in this paper and in \cite{schramm2005compositions} will allow us to obtain the Poisson-Diriclet distribution in $\Lambda _n$. We intend to pursue this direction in the future.

\subsubsection{Other dimensions}
Our proof holds in dimension $d\ge 5$ because of the following heuristic. A random walk in five dimension will completely avoid its past with positive probability whereas in four dimension it will intersect its past with high probability. This fact allows us to couple the walk with shorter independent walks in order to analyze its behavior.

Dimension four is in some sense critical for the property of avoiding the past. This is similar to the fact the dimension two is critical for the property of transience and recurrence. Indeed, consider the probability $p_n$ that a simple random walk after time $n$ avoids its history before time $n$. That is, 
\begin{equation*}
    p_n:=\mathbb P \big( \exists s \le n-1,\  t\ge n \text{ such that }   W(s)=W(t) \big),
\end{equation*}
where $W$ is a simple $d$ dimensional random walk. It is well known that $p_n$ decays polynomially with $n$ when $d\le 3$, is bounded away from zero when $d\ge 5$ and decays like $1/\log n$ when $d=4$.

In the future we hope  to push our methods to dimension 4 using the fact that $p_n$ decays so slowly to zero in this case. To this end, one has to understand better the self intersections of the walk and argue that these intersections do not change the diffusive behavior of the walk too much. 

\subsubsection{The existence of a critical time}
A simple percolation argument shows that in any dimension $d\ge 2$ if $\beta $ is sufficiently small then there are no infinite cycles in $\pi _\beta $. Indeed, consider the set of edges that rang at least once up to time $\beta $. This is a percolation with probability $1-e^{-\beta }$ and therefore if $1-e^{-\beta }<p_c(\mathbb Z ^d )$ then the resulting graph contains only finite connected components. It is clear that in this case $\pi _ \beta $ contains only finite cycles.

We note that, unlike percolation for example, there is no simple monotonicity in this model. It might be the case that infinite cycles appear at some time and then disappear at a later time. Thus, it is still open to prove the existence of a unique phase transition for all $d\ge 3$. That is, to prove that there exists a critical time $\beta _c$ such that for $\beta <\beta _c$ there are only finite cycles and for $\beta >\beta _c$ there are infinite cycles.

It is possible that using a combination of our methods and those of \cite{hammond2015sharp}, one can prove the existence of a phase transition when the dimension $d$ is sufficiently large.

\subsubsection{Weakly correlated walks}
Many self-interacting random walks, such as the self avoiding walk or weakly self avoiding walk have been successfully studied using the Lace Expansion.  Our inductive method seems to be quite robust in the sense that it does not use heavily the exact local interactions of the interchange process. It is natural to ask if our proof extends to any of these walks.  

\subsection{Related models}

\subsubsection{Spatial random permutations}
The interchange process on $\mathbb Z ^d$ can be thought of as part of a wider family of models of spatial random permutations. In these models a random permutation is sampled such that $x$ and $\pi (x)$ are typically close in some underlying geometry.
Two examples of such models are the Mallows measure defined in one dimension and the Euclidean random permutations model. The second model was introduces by Matsubara \cite{matsubara1951quantum} and Feynman \cite{feynman1953atomic} to study the phenomenon of Bose-Einstein condensation in the ideal Bose gas. The model is defined as follows. For a given dimension $d$, time $t>0$ and length $L>0$, one samples $L^d$ particles in the continuous $d$-dimensional torus of side length $L$, and a permutation $\pi $ of the particles, with probability density proportional to 
\begin{equation}\label{eq:835}
    \prod _{i=1}^{L^d} \exp \big( |x_{\pi (i)}-x_i|^2/t \big)=\exp \bigg( \frac{1}{t}\sum _{i=1}^{L^d} |x_{\pi (i)}-x_i|^2 \bigg)
\end{equation}
where $x_i$ is the position of the $i$-th particle and where $|\cdot |$ denotes the Euclidean distance. Note that in here the particles and the permutation are sampled simultaneously with a joint density given by \eqref{eq:835}. The connection to the interchange in finite volume is clear. Indeed, in the interchange process any particle performs a simple continuous time walk and therefore its final position will be roughly normally distributed around its starting point with standard deviation $\sqrt{t}$. Thus, the density of the final position $\pi _t(x)$ of the particle starting at $x$ will be roughly given by the term inside the product in \eqref{eq:835}. Since different particles in the interchange process are somewhat independent, one may expect that a permutation sampled according to the product \eqref{eq:835} will share some similarities with the interchange.

The model defined in \eqref{eq:835} is in some sense integrable or exactly solvable and much more rigorous results are known. The analogue of Tóth conjecture was proved in \cite{betz2009spatial} and the Poisson Dirichlet limit law is established in \cite{betz2011spatial}.

In the finite volume interchange process in dimensions 2, it is conjectured that for all fixed $t>0$ there are only short and finite cycles in the permutation. However, when $t$ grows like $\log L$ (where $L$ is the side length of the box), it is expected that macroscopic cycles are starting to form. This prediction was rigorously established for the Euclidean permutations model by the first author and Peled \cite{elboim2019limit}. In this paper there are similar results in dimension $1$ and in the critical time in all dimensions. These results exceed the best known predictions for the interchange process and the possibility of universality is especially intriguing.

\subsubsection{The quantum Heisenberg ferromagnet}
Consider the following random permutation model on a graph $G=(V,E)$. For a permutation $\pi $ we let $C(\pi )$ denotes the number of cycles in $\pi $. Suppose that $\mu _\beta (\pi)$ is the probability to obtain the permutation $\pi :V \to V$ in the interchange process at time $\beta $. We define the new measure on permutation 
\begin{equation}\label{eq:1}
    \nu _t(\pi ):=\frac{1}{Z} \cdot 2^{C (\pi )} \cdot \mu _\beta (\pi ),
\end{equation}
where $Z$ is a normalizing constant. This is just the interchange measure on permutations tilted by an additional factor of $2$ to the power of the number of cycles. It is shown in \cite{toth1993improved} that this tilted model is in direct correspondence with the quantum Heisenberg model on $G$ at temperature $T=1/\beta $. More precisely, the spin-spin correlation between sites $x$ and $y$ in the quantum Heisenberg model equals a constant times the probability that $x$ and $y$ are in the same cycle. We denoted the time by $\beta $ throughout the paper sense it corresponds to the inverse temperature in the quantum Heisenberg ferromagnet.

A longstanding open problem is to rigorously prove the existence of a phase transition in the quantum Heisenberg ferromagnet on the torus $\Lambda _L:=[0,\dots ,L)^d$ in dimension $d\ge 3$. This is equivalent to showing the emergence of long cycles in the random permutation sampled according to \eqref{eq:1} with $G=\Lambda _L$. It is possible that using the methods developed in this paper, one can make progress toward this conjecture. 

Let us mention that in the antiferromagnetic case of the quantum Heisenberg model more is rigorously known. Indeed, in this case Dyson, Lieb and Simon \cite{dyson2004phase} proved the existence of long range order for all dimensions $d\ge 5$. This was later extended to dimensions $d\ge 3$ using further observations of Neves and Perez \cite{neves1986long}, and of Kennedy, Lieb and Shastry \cite{kennedy1988existence}. Finally, we refer the reader to the survey \cite{goldschmidt2011quantum} for further background on the quantum Heisenberg model.

\subsection{Organization of the paper}
In Section~\ref{sec:cyclic}, we define the cyclic walk and the regenerated cyclic walk and prove some basic a priori estimates for these walks. In Section~\ref{sec:induction} we state the induction hypothesis and in Section~\ref{sec:relaxed} we show that the walk will, most likely, avoid its history after a relaxed time. One of the main ideas of this paper is the escape algorithm which is presented in Section~\ref{sec:escape}. It shows that every time the walk enters a new small unexplored region, it has a non negligible chance of escaping far in a short period of time. The results of this section allows us to handle complicated situations in which the walk entangles with its history. In Section~\ref{sec:heavy} we prove that the walk does not spend too much time in any large block and that the the walk will quickly reach a relaxed time. These key estimates allows us to couple the long walk with a concatenation of shorter independent walks. In Section~\ref{sec:concatenation} we use this concatenation to prove the induction step. In Section~\ref{sec:base} we prove the induction base. Finally, in Section~\ref{sec:brownian} we prove some Brownian motion estimates that are used throughout the paper.

\subsection{Acknowledgments}
The first author would like to give a special thanks to Ron Peled for introducing to him the conjecture of Tóth in a seminar in Tel Aviv University around eight years ago and for many helpful discussions over the years. This conjecture is one of the reasons why D.E. decided to specialize in probability theory. D.E. thanks Ofir Gorodetsky, Gady Kozma, Yuval Peres and Daniel Ueltschi for fruitful discussions on the interchange process and related models. We thank Roland Bauerschmidt and Tom Spencer for meeting with us.  Indeed, their suggestions really helped get us started with the project. We thank Bálint Tóth for telling us about the history and background of the problem and of course for making the conjecture!  A.S. is supported by NSF grants DMS-1855527, DMS-1749103, a Simons Investigator grant, and a MacArthur Fellowship.

\section{Cyclic and regenerated walks}\label{sec:cyclic}

The continuous time \textbf{interchange process} $\pi_\beta$ is stochastic process taking values in the symmetric group on 
the vertex set of some graph $G=(V,E)$ with generator
\[
\mathscr{L}f(\pi) = \sum_{(u,v)\in E} (f((u v) \cdot \pi ) - f(\pi))
\]
It can be constructed as follows from rate 1 Poisson clocks $\Psi_e$ on each edge $e$ of the graph.  If edge $e=(u,v)$ rings at time $s$, the process is multiplied by the transposition $(u v)$ so $\pi_s=(uv) \cdot \pi_{s^-} $. 

For time $t=k\beta+s$ where $k\in\Z$ and $s\in[0,\beta)$ we define the \textbf{cyclic walk with period $\beta$} to be $W(t) := (\pi_s(\pi_\beta)^k)(0)$ which describes the position of the origin generated by the random transpositions which repeat cyclically with period $\beta$.  More concretely, the walk jumps from $u$ to $v$ at time $t=k\beta+s$ if $W(t-)=u$ and the edge $(u,v)$ rings at time $s$. Note that according to this definition the walk $W$ is defined for all $t\in \mathbb R$.

Up until time $\beta$ the transpositions are Poisson processes so the cyclic walk is simply a continuous time random walk in the interval $[0,\beta]$.  However, after time $\beta$ it may encounter jumps it has already taken and so there will be an interaction with its past.  In particular, the cyclic walk is not a Markov chain. A cyclic walk on $\Z$ is illustrated in Figure~\ref{fig:cyclic}.    

\begin{figure}[htp]
    \centering
    \includegraphics[width=15cm]{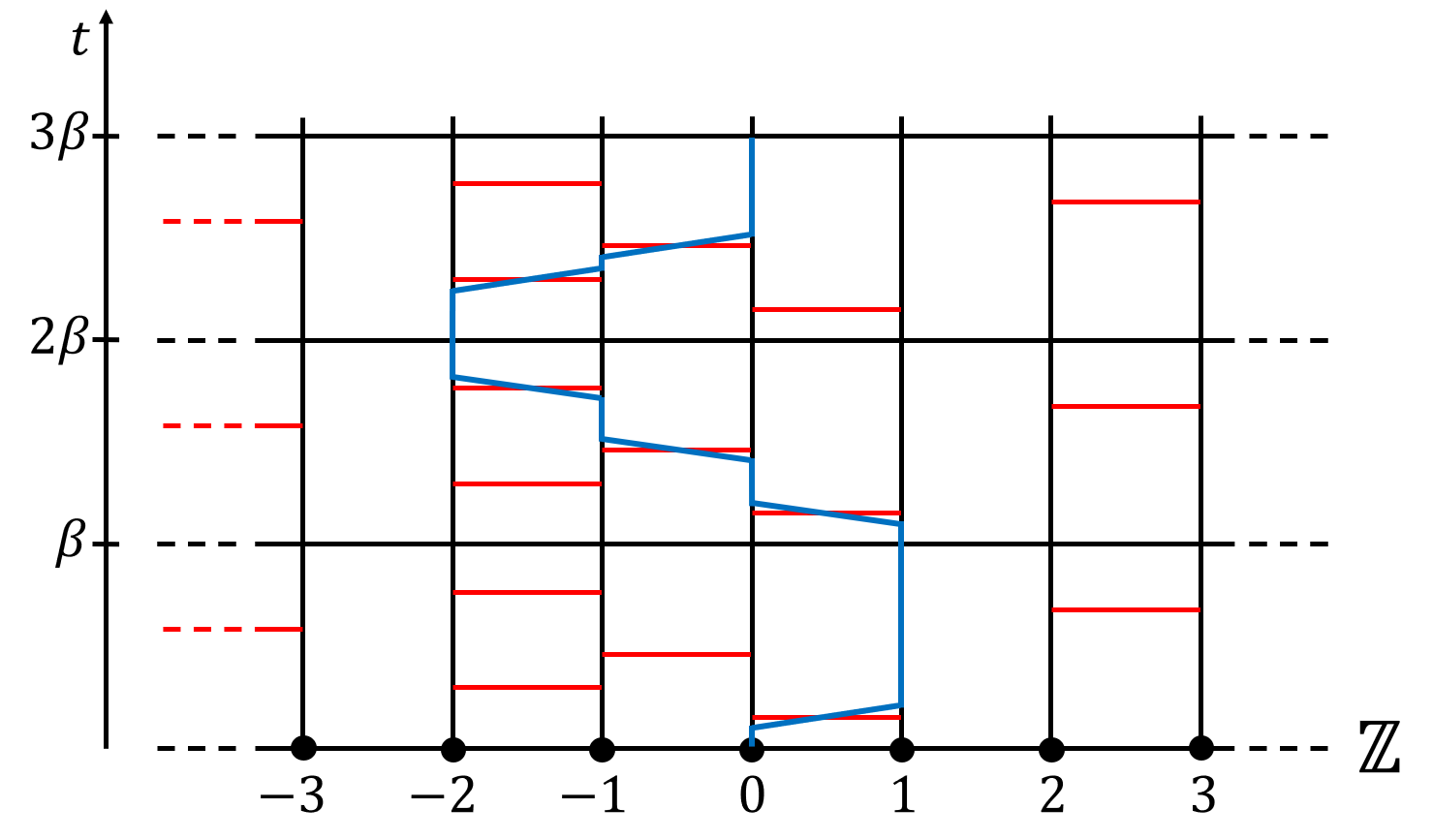}
    \caption{A cyclic time random walk on $\mathbb Z$. The red bars indicate a ring on the corresponding edge. The pattern repeats periodically when time is shifted up by $\beta $. The blue path is the cyclic time random walk $W(t)$ starting from the origin. When the walk meets a horizontal bar it jumps across it to the other side. In this example the walk closes at time $3\beta $ and therefore the cycle of the origin is of length $3$. Note also that $-1$ is a fixed point of $\pi _\beta $ and $(2,3)$ is a cycle of length $2$. }
    \label{fig:cyclic}
\end{figure}

From here let us assume that the graph is $\Z^d$ .
In order to sample the cyclic walk (say in a computer simulation), we do not need the randomness of the Poisson processes on all the edges. In fact, we only need the randomness of a simple, continuous time random walk. Let $X(t)$ be a simple continuous time walk, jumping with rate one to each one of its neighbors. We define $W(t)$ for $t>0$ in the following inductive way. Intuitively, $W$ takes exactly the same steps as $X$, unless it is forced to do something else by its history. That is, the jump is suppressed if $W$ is attempting to jump into its history and a jump is forced if its history jumps into it. More precisely, suppose that $W$ was defined for all $s\le t^-$. Then, 
\begin{enumerate}
    \item 
    If $X$ jumps at time $t$ and for all $k\in \mathbb N $ we have that 
    \begin{equation}
        W(t^-)+X(t)-X(t^-)\neq W(t-k\beta )
    \end{equation}
    then we take the jump: $W(t)-W(t^-)=X(t)-X(t^-)$. Otherwise, the jump is suppressed: $W(t)=W(t^-)$.
    \item 
    Similarly, if for some $k\in \mathbb N$ we have that $W$ jumps at time $t-k\beta $ and $W(t-k\beta )=W(t^-)$ then $W$ is forced to jump: $ W(t)=W((t-k\beta)^-)$.
\end{enumerate}
It is clear that the walk $W$ defined in the above way has the same law as the cyclic random walk. We say that the cyclic random walk $W(t)$ is  \textbf{driven}  by the simple random walk $X(t)$. We let $\mathcal F _t$ be the sigma algebra generated by the walk $X(s)$ for $s\in [0,t]$.

We say that the walk $W$ is \textbf{interacting with its past} at time $t>0$ if
\[
\inf \big\{ |W(t)-W(t-k\beta)|: k \in \mathbb N , \  k\le t/\beta  \big\} \leq 1.
\]
If $W(t)$ does not interact with its past in $[t,t']$ then $W(s)-W(t)=X(s)-X(t)$ for all $s\in[t,t']$.

If the origin is in a finite cycle then for some positive integer $k$ we have that $W(k\beta)=0$ and we define
\begin{equation}\label{eq:def of tau reg}
\tau _{\text{reg}}=\inf\{k\beta:k\in \mathbb N, W(k\beta)=0\}.    
\end{equation}
After $\tau _{\text{reg}}$, the cyclic walk repeats its past and so $W(s+\tau _{\text{reg}})=W(s)$ for all $s\geq 0$.  We call $\tau _{\text{reg}}$ the \textbf{time the cycle closes} and, for reasons that will be clear shortly, the \textbf{regeneration time}.  Our goal is to study the cyclic walk on the event that the cycle does not close.  Rather than study the conditional distribution, if the cycle closes we generate a new independent interchange process and run the process until the cycle closes in that one, repeating as many times as necessary.  Formally, we define the \textbf{regenerated cyclic walk} $W(s)$ as follows.  Let $\{W^{(i)}(s)\}_{i\geq 1}$ be IID cyclic walks with regeneration times $\{\tau^{(i)}_{\text{reg}}\}_{i\geq 1}$.  Then
\begin{equation}\label{eq:regen.walk.defn}
W(s):= W^{(i)}\bigg( s - \sum_{j=1}^{i-1} \tau^{(j)}_{\text{reg}} \bigg) \quad \hbox{for } s\in \bigg[ \sum_{j=1}^{i-1} \tau^{(j)}_{\text{reg}},\sum_{j=1}^{i} \tau^{(j)}_{\text{reg}} \bigg] .    
\end{equation}
That is it sequentially follows the cyclic walks $W^{(i)}(s)$ up until they close the cycle and then moves onto the next one.  Note that if $i$ is the smallest integer with $\tau^{(i)}_{\text{reg}}=\infty$ then after time $\sum_{j=1}^{i-1} \tau^{(j)}_{\text{reg}}$ then the walk evolves according to $W^{(i)}(s)$ for all future time.  We let
\[
\fR_t:=\max\bigg\{ i:\sum_{j=1}^{i} \tau^{(j)}_{\text{reg}}\leq t \bigg\}
\]
count the number of regeneration by time $t$ and let $\tau_{\text{reg}}=\tau^{(1)}_{\text{reg}}$ denote the first regeneration time. We let
\[
\alpha(t):=\inf\{s:\fR_t=\fR_s\}
\]
which is the most recent regeneration time.  We can couple the regenerated random walk in the same way where the coupling is started anew at each regeneration time.

Let $\tau _{\text{hit}}(t)$ be the first time the walk interacts with its history, that is 
\begin{equation}\label{eq:hit}
    \tau _{\text{hit}}(t) :=\inf \big\{ s>t : \exists s'\in s+\beta \mathbb Z, \ \alpha (t)<s'<t \text{ such that } \ \| W(s)-W(s') \| \le 1  \big\}.
\end{equation}
We can couple the future of the walk after time $t$ with an independent cyclic walk until $\tau _{\text{hit}}(t)$.
\begin{lem}\label{l:t.hit.couple}
Let $W(t)$ be the cyclic walk driven by $X(t)$.  Let $W'(s)$ be the cyclic walk driven by $X'(s)=X(t+s)-X(t)$.  Then for $s\in[0,\tau_{\text{hit}}(t)-t]$ we have $W(t+s)-W(t)=W'(s)$.
\end{lem}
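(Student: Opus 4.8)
The plan is to run the two inductive constructions in parallel. Write $\widehat W(s):=W(t+s)-W(t)$, so $\widehat W(0)=W'(0)=0$, and observe that since $X'(s)=X(t+s)-X(t)$ the walks $\widehat W$ and $W'$ are driven by the very same simple random walk increments. Recall that the cyclic walk attached to a driving walk is a \emph{deterministic} function of that walk's increments together with the cyclic walk's own past: at each relevant time one resolves whether a proposed increment is taken, whether it is suppressed, or whether a jump is forced. The set of times at which anything can happen to $\widehat W$ or to $W'$ is locally finite --- it consists of the jump times of $X'$ together with all $s=r+k\beta$ with $r<s$ a jump time of $X'$ and $k\ge 1$. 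I will show, by induction along this set of times, that throughout $[0,\tau_{\text{hit}}(t)-t]$ the resolution for $\widehat W$ at time $s$ coincides with the resolution for $W'$ at time $s$; since both walks are constant between consecutive such times, this yields $\widehat W(s)=W'(s)$ on all of $[0,\tau_{\text{hit}}(t)-t]$.

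Fix such a time $s\in(0,\tau_{\text{hit}}(t)-t]$ and suppose $\widehat W(r)=W'(r)$ for every $r<s$. The resolution of $W'$ at time $s$ consults the values $W'(s-k\beta)$ and the events ``$W'$ jumps at $s-k\beta$'' over $k\ge 1$ with $s-k\beta\ge 0$, whereas the resolution of $W$ at time $t+s$ consults $W(t+s-k\beta)$ and ``$W$ jumps at $t+s-k\beta$'' over $k\ge 1$ with $t+s-k\beta\ge\alpha(t)$. Split these indices into the \emph{recent} range $t+s-k\beta\ge t$ and the \emph{old} range $\alpha(t)\le t+s-k\beta<t$. On the recent range $t+s-k\beta=t+(s-k\beta)$ with $s-k\beta\ge 0$, so the induction hypothesis gives $W(t+s-k\beta)-W(t)=\widehat W(s-k\beta)=W'(s-k\beta)$ and identifies the corresponding jump events; consequently the recent part of $W$'s past feeds exactly the same data into the resolution of $\widehat W$ as the full past of $W'$ feeds into the resolution of $W'$. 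It remains only to show that the old range contributes nothing: that on $[t,\tau_{\text{hit}}(t)]$ the walk $W$ neither suppresses a proposed step onto, nor is forced to jump by, a history point $W(s')$ with $\alpha(t)<s'<t$ and $s'\equiv s\pmod\beta$. (The endpoint $s'=\alpha(t)$ never enters the two rules defining the walk, almost surely, since these only invoke $W(s')$ for $s'$ a jump time of the driving walk or of $W$, and no such time lies in $\beta\mathbb{Z}$.)

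This inertness is exactly what $\tau_{\text{hit}}(t)$ encodes. If $W$ suppressed a proposed nearest-neighbour step at a time $s\in(t,\tau_{\text{hit}}(t)]$ because the target equals $W(s')$ with $\alpha(t)<s'<t$, then $\|W(s^-)-W(s')\|=1$; as $W$ and this history point are both constant just before $s$, this gives $\|W(r)-W(r-k\beta)\|\le 1$ with $\alpha(t)<r-k\beta<t$ for all $r$ slightly below $s$, contradicting the definition of $\tau_{\text{hit}}(t)$. The forced-jump case is the same: for $W$ to be forced by such a history point its position must first coincide with it, hence certainly lie within distance $1$ of it, again contradicting minimality of $\tau_{\text{hit}}(t)$. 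At the boundary time $s=\tau_{\text{hit}}(t)$ itself, $W$ does come within distance $1$ of some such $W(s')$, but since $\tau_{\text{hit}}(t)$ is the \emph{first} such time this must be caused either by a fresh increment of $W$ landing on a site adjacent to $W(s')$ (it cannot land on $W(s')$, for that step would be suppressed and would presuppose an earlier distance-$\le 1$ event) or by the history point itself stepping adjacent to $W$; in neither case is $W$ altered at $\tau_{\text{hit}}(t)$ by the old range, so its resolution still agrees with that of $W'$. This closes the induction, and Lemma~\ref{l:t.hit.couple} follows.

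The crux, as this discussion shows, is the bookkeeping rather than any probabilistic input: one must verify that the forced-jump rule, not merely the suppression rule, is subsumed by $\tau_{\text{hit}}(t)$; that the relevant past of $W$ after time $t$ reaches back no further than $\alpha(t)$, so that the window $(\alpha(t),t)$ appearing in the definition of $\tau_{\text{hit}}(t)$ is exactly the right one; and that the closed endpoint $s=\tau_{\text{hit}}(t)$ behaves as above. Morally the lemma is a quantitative sharpening of the earlier remark that $W$ follows its driving walk exactly on any interval on which it does not interact with the pertinent part of its past.
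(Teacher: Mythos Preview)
Your proof is correct and follows exactly the same idea as the paper's own proof, which is the one-liner: ``Since the driving random walks are the same for $W'(s)$ and $W(t+s)-W(t)$, they are identical until $W(t+s)-W(t)$ is affected by its past in $[-t,0]$ which happens at $\tau_{\text{hit}}(t)-t$.'' You have simply unpacked this sentence into a careful induction along jump times, explicitly checking that neither the suppression rule nor the forced-jump rule can be triggered by the old history before $\tau_{\text{hit}}(t)$, and handling the closed endpoint; the paper leaves all of that implicit.
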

\begin{proof}
Since the driving random walks are the same for $W'(s)$ and $W(t+s)-W(t)$, they are identical until $W(t+s)-W(t)$ is affected by its past in $[-t,0]$ which happens at $\tau_{\text{hit}}(t)-t$.
\end{proof}

In Section~\ref{sec:concatenation} we will use Lemma~\ref{l:t.hit.couple} to approximate a cyclic walk by a sum of shorter independent ones and thus show it is approximately Brownian.  Let us note that by symmetry we have that there exists $\theta_t$ such that 
\begin{equation}\label{eq:mean.covar}
\E[W(t)]=0,\qquad \hbox{Cov}(W(t)) = \theta_t I.
\end{equation}

\subsection{Constant policy}
Throughout the paper we regard the dimension $d\ge 5$, as fixed. Constants such as $C,c$ may depend on $d$ but are independent of all other parameters. These constants are regarded as generic constants in
the sense that their value may change from one appearance to the next, with the value of $C$
increasing and the value of $c$ decreasing. However, constants labeled with a fixed number,
such as $C_0$, $c_0$, have a fixed value throughout the paper.

\subsection{Deviation bounds}
In this section we give some simple a priori bounds on the walk moving too far in a short period of time. Throughout the paper $\|\cdot \|$ will be the $L^{\infty }$ norm.

\begin{lem}\label{cor:far}
For all $L,t\geq 1$,
\begin{equation}
    \mathbb P \Big( \max _{s\le t }\|W(s)\| \ge 10dLt \Big) \le Ce^{-Lt}.
\end{equation}
\end{lem}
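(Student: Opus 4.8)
The plan is to reduce Lemma~\ref{cor:far} to a Poisson tail bound by controlling the displacement of $W$ through the number of jumps of its driving walk $X$. The crucial observation is that $W$ can reach a site it has \emph{never visited before} only via a direct jump (the first case of the inductive definition of $W$): a forced jump always sends $W$ to $W\big((t-k\beta)^-\big)$, a site it occupied strictly earlier, and a suppressed jump does not move $W$ at all. Hence, writing $q_0:=0$ and letting $q_1,q_2,\dots$ be the successive positions of $W$ immediately after its direct jumps, every value taken by $W$ on $[0,t]$ equals some $q_j$, and for each $j\ge 1$ we have $q_j=q_\ell+\big(X(s_j)-X(s_j^-)\big)$ for some $\ell<j$, where $s_j$ is the time of the $j$-th direct jump of $W$ (the position of $W$ just before a direct jump is the value left by a previous direct jump, possibly revisited through forced jumps in between). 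Since each step of $X$ has norm $1$, an immediate induction gives $\|q_j\|\le j$, and therefore $\max_{s\le t}\|W(s)\|\le D(t)$, where $D(t)$ denotes the number of direct jumps of $W$ in $[0,t]$.

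Every direct jump of $W$ is in particular a jump of $X$, so $D(t)\le N(t)$, where $N(t)$ is the number of jumps of $X$ in $[0,t]$; since $X$ jumps at total rate $2d$ on $\Z^d$, the random variable $N(t)$ is Poisson with mean $2dt$. It thus remains to estimate $\mathbb P\big(N(t)\ge 10dLt\big)$, for which I would invoke the standard Chernoff bound $\mathbb P\big(\mathrm{Poisson}(\lambda)\ge m\big)\le e^{-\lambda}(e\lambda/m)^m$, valid for $m\ge\lambda$. With $\lambda=2dt$ and $m=10dLt$ one has $e\lambda/m=e/(5L)\le e/5<1$ for every $L\ge 1$, so $\mathbb P\big(N(t)\ge 10dLt\big)\le e^{-2dt}(e/5)^{10dLt}\le e^{-Lt}$, with plenty of room (using only $d\ge 1$). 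This gives the claim with $C=1$, and in fact with exponential rate of order $dLt$.

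The argument is elementary, and the step most in need of care—and the only plausible obstacle—is the claim that a forced jump never carries $W$ to a previously unvisited site. This should be checked directly against the two cases of the definition of $W$, noting that at each time $t$ one has $W(t)\in\{\,W(t^-),\ W(t^-)+\big(X(t)-X(t^-)\big),\ W\big((t-k\beta)^-\big)\,\}$, that the first and last of these have been visited before, and that any coincidence of the direct and forced rules at the same time occurs with probability zero. Finally, the induction is anchored on $[0,\beta)$, where there is no past to interact with and $W$ copies $X$ verbatim, so that $\|W(s)\|=\|X(s)\|\le N(s)$ there trivially.
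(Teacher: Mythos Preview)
Your proof is correct. The core idea is the same as the paper's---fresh exploration of new sites (or, in the paper, new levels) can only happen through genuinely new Poisson rings, so the extent of the walk is controlled by a Poisson count---but you package it differently. The paper works coordinate by coordinate: the first time $W$ reaches level $i$ in direction $e_1$ must be a fresh ring on an edge in that direction, so the maximal coordinate is dominated by a $\mathrm{Poisson}(t)$ variable, and one union-bounds over the $2d$ directions and passes from $\|\cdot\|_\infty$ to $\|\cdot\|$. You instead bound the Euclidean norm directly by the total number $D(t)$ of direct jumps (via the induction $\|q_j\|\le j$, which rests on the observation that forced jumps only revisit old sites), and then $D(t)\le N(t)\sim\mathrm{Poisson}(2dt)$. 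Your route avoids the coordinate decomposition and the norm comparison, and yields $C=1$; the paper's route isolates the rate-$1$ Poisson per direction, which is marginally sharper per coordinate but needs the union bound. Both arguments ultimately hinge on the same structural fact about the driving-walk construction, which you identify and justify carefully.
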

\begin{proof}
Let $s_i=\inf\{s:\langle W(s),e_1 \rangle = i\}$ where $e_1=(1,0,\ldots,0)\in \R^d$.  Each new step further along in direction $e_1$ corresponds to a new Poisson clock ring so
\[
\P \big( s_{10LT} \leq t \big) \leq \P \big( {\rm Poisson}(t) \geq 10LT \big)  \leq e^{-Lt}.
\]
Taking a union over the co-ordinate directions and the positive and negative directions,
\[
    \mathbb P \Big( \max _{s\le t }\|W(s)\| \ge 10dLt \Big)\leq     \mathbb P \Big( \max _{s\le t }\|W(s)\|_\infty \ge 10Lt \Big)\leq e^{-Lt}. \qedhere
\]
\end{proof}

It will also be useful ensure the walk does not move to fast within a short time period.
Define the stopping time 
\begin{equation}\label{eq:tau fast}
    \tau _{\text{fast}}(L):=\inf \big\{ t>0 : \max _{s\in [t-1,t]}\|W(t)-W(s)\| \ge L \big\}.
\end{equation}

\begin{claim}\label{claim:fast}
There exists $C,c>0$, such that for $L\ge 1$  we have that 
\begin{equation}
    \mathbb P \big( \tau _{{\rm fast}} (L) \le t \big) \le Ct^2 e^{-cL}.
\end{equation}
\end{claim}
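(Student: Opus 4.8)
The plan is to reduce the claim to the deviation bound of Lemma~\ref{cor:far} applied over a family of unit-length time windows, together with a union bound. First I would discretize time: for an integer $n\ge 1$, consider the windows $I_n:=[n-1,n+1]$, so that any interval of the form $[t-1,t]$ with $t\le T$ is contained in $I_n$ for some integer $n\le \lceil T\rceil$. If $\tau_{\mathrm{fast}}(L)<T$, then there is some $t\le T$ with $\max_{s\in[t-1,t]}\|W(t)-W(s)\|\ge L$, and hence for the corresponding $n$ we have $\max_{s,s'\in I_n}\|W(s)-W(s')\|\ge L$, which in turn forces $\max_{s\in I_n}\|W(s)-W(n-1)\|\ge L/2$ (by the triangle inequality, since one of the two points realizing the diameter is at distance at least $L/2$ from $W(n-1)$).

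Next I would control each window. By the Markov-type structure of the driving walk — more precisely, using Lemma~\ref{l:t.hit.couple} to couple the walk after time $n-1$ with a fresh cyclic walk $W'$ started at $0$ up to the hitting time, and noting that before the walk interacts with its past the increments agree with those of $W'$ — together with the fact that interactions with the past only ever \emph{suppress} or \emph{force} jumps and so cannot increase the displacement beyond that of the driving simple random walk, it suffices to bound $\mathbb P(\max_{s\in[0,2]}\|W'(s)\|\ge L/2)$. Actually the cleanest route avoids the coupling entirely: the displacement $\|W(s)-W(n-1)\|$ over $s\in I_n$ is at most $\|X(s)-X(n-1)\|$ where $X$ is the driving simple random walk, because every realized jump of $W$ is a jump of $X$ (suppressed jumps do not move $W$, and forced jumps of $W$ reuse past jumps of $W$, which are themselves $X$-jumps at earlier cyclic times — here one must be slightly careful, so in practice I would instead invoke Lemma~\ref{cor:far} directly for the walk restarted at time $n-1$). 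Using Lemma~\ref{cor:far} with $t=2$ and the appropriate choice of the parameter (namely $L_{\mathrm{Lem}}$ with $10d\cdot L_{\mathrm{Lem}}\cdot 2 = L/2$, i.e. $L_{\mathrm{Lem}}=L/(40d)$), we get $\mathbb P(\max_{s\in I_n}\|W(s)-W(n-1)\|\ge L/2)\le C e^{-cL}$ for a constant $c=1/(20d)$.

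Finally I would assemble the pieces: by a union bound over the at most $\lceil T\rceil + 1 \le 2T+1$ relevant values of $n$,
\begin{equation}
\mathbb P\big(\tau_{\mathrm{fast}}(L)<T\big)\le (2T+1)\,Ce^{-cL}\le C'T e^{-cL}.
\end{equation}
This is actually stronger than the claimed $Ct^2 e^{-cL}$ (linear rather than quadratic in $t$), which is fine. If one prefers to match the statement exactly, the bound $(2T+1)Ce^{-cL}\le CT^2 e^{-cL}$ for $T\ge 1$ is immediate. The main obstacle — really the only subtle point — is justifying that the self-interaction of the cyclic walk cannot help it travel faster than its driver over a short window; this is handled either by the coupling in Lemma~\ref{l:t.hit.couple} (valid up to $\tau_{\mathrm{hit}}$, after which one recurses) or, more simply, by observing that the relevant deviation event for $W$ restarted at time $n-1$ is dominated by the corresponding event for a continuous-time simple random walk restarted at that time, since forced and suppressed jumps only ever redistribute or remove mass from the increment, never add a net new displacement beyond the Poisson clock rings — a fact whose careful statement is essentially the content of Lemma~\ref{cor:far}'s proof, which bounds displacement by counting clock rings. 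Everything else is routine.
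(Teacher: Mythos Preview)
Your argument has a genuine gap at the step where you bound the displacement of $W$ over a window $I_n=[n-1,n+1]$. Neither of your two proposed routes works.

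The domination claim --- that $\|W(s)-W(n-1)\|\le \|X(s)-X(n-1)\|$ --- is simply false. A \emph{forced} jump of $W$ at time $s$ (triggered because $W$ jumped at $s-k\beta$ and $W(s^-)=W(s-k\beta)$) is not a jump of the driving walk $X$ at time $s$. So $W$ can move in $[n-1,n+1]$ purely via forced jumps while $X$ stays still. Your parenthetical already notices this (``forced jumps \ldots\ are $X$-jumps at earlier cyclic times''), which is precisely why the inequality fails.

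Adapting the proof of Lemma~\ref{cor:far} to the walk ``restarted at time $n-1$'' also fails. That proof works because hitting a new record in the $e_1$-coordinate forces the walk across an edge it has \emph{never} visited, so each record consumes a fresh Poisson ring; the total fresh edge--time explored in $[0,t]$ has measure at most $t$, yielding the Poisson$(t)$ bound. After time $n-1$ this breaks: a new record relative to $W(n-1)$ need not be a new record for the whole walk --- the walk may have ranged far in $[0,n-1]$ and can now be pushed back there by forced jumps, with no fresh rings at all. There is no a~priori bound on the number of forced jumps in a unit window conditionally on $\mathcal F_{n-1}$.

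What you are missing is \emph{stationarity of the increments of the non-regenerated cyclic walk}. By joint time- and space-translation invariance of the Poisson environment, $(\hat W(r+s)-\hat W(r))_{s\ge 0}$ has the same law for every $r$, so $\P\big(\max_{s\in[i-1,i]}\|\hat W(s)-\hat W(i-1)\|\ge L/4\big)$ equals the $i=1$ case, which Lemma~\ref{cor:far} bounds by $Ce^{-cL}$. This one line replaces all of your hedged attempts.

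There is also a second, separate gap: you never treat the \emph{regenerated} walk, which is what $\tau_{\rm fast}$ concerns. The regenerated walk is not stationary. The paper handles this by working on blocks of length $\beta$ (so regenerations fall on block boundaries), bounding each block for a single non-regenerated copy via stationarity, and then taking a union bound over the at most $\lceil t/\beta\rceil$ copies and $\lceil t/\beta\rceil$ blocks. That double union is exactly where the $t^2$ comes from --- and why your claimed sharper bound $CTe^{-cL}$ is not available by this route.
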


\begin{proof}
When $t\leq \beta$  we can couple the walk with a simple random walk and the result follows by standard random walk estimates.  So assume that $t\geq \beta$. First consider the problem for a non-regenerated cyclic walk $\hat{W}(s)$.  Let
\[
\cA_i=\big\{ \max_{s\in [i-1,i]}\|\hat{W}(s)-\hat{W}(i-1)\| \geq L/4 \big\}
\]
and note that by Lemma~\ref{cor:far} we have that $ \P(\cA_1)\leq Ce^{-cL}$. Next let
\[
\hat{\cC}_i= \Big\{\exists s,s'\in [(i-1)\beta ,i\beta ] \text{ with } |s-s'|\le 1 \text{ such that }\|\hat{W}(s)-\hat{W}(s')\| \geq L/2 \Big\}.
\]
By the triangle inequality, $\hat{\cC}_1 \subset \bigcup_{i=1}^{\lceil \beta \rceil} \cA_i$
and by stationary of the process have that $\P(\cA_1)=\P(\cA_i)$ and so by a union bound $\P( \hat{\cC}_1 ) \leq C \beta e^{-cL}$.

Again by stationarity we have that $\P(\cC_1)=\P(\cC_i)$.  Now, moving back to the regenerated random walk let
\[
\cC_i= \Big\{\exists s,s'\in [(i-1)\beta ,i\beta ] \text{ with } |s-s'|\le 1 \text{ such that } \|W(s)-W(s')\| \geq L/2 \Big\}.
\]
Since regeneration times happen at integer multiples of $\beta$, the regenerated and non-regenerated walks have the same distribution up to time $\beta$ and so $\P(\cC_1)=\P(\hat{\cC}_1)\leq C \beta e^{-cL}$.  If we view the regenerated cyclic walk as constructed out of a sequence of independent cyclic walks as in equation~\eqref{eq:regen.walk.defn}, we have that $\cC_i$ holds provided $\hat{\cC}_{i-\alpha((i-1)\beta)/\beta}$ holds for the $\fR_{(i-1)\beta}$th walk.  There are at most $t/\beta$ regenerations by time $t$ so by a union bound
\[
\P \bigg( \bigcup_{i=1}^{\lceil t/\beta\rceil} \cC_i \bigg) \leq \lceil t/\beta\rceil^2 \P (\hat{\cC}_1 ) \leq C t^2 \beta e^{-cL}.
\]
The lemma is completed by noting that $\{ \tau _{{\rm fast}} (L)\le t\}\subset \bigcup_{i=1}^{\lceil t/\beta\rceil} \cC_i$.
\end{proof}

\section{Induction hypothesis}\label{sec:induction}
In this section we outline our inductive hypothesis  which will be over a series of time scales $t_n=4^n t_0$ where $t_0\in (1,4]$.  Throughout the remainder of the paper we will assume that $\beta$ is a large constant and each lemma should be read as implicitly stating that ``for a sufficiently large fixed $\beta$ the following holds''.  We also fix the following constants throughout the paper
\begin{equation}
    \epsilon :=1/(300d) \quad \delta :=1/(3000d^2).
\end{equation}

Recall that $\|\cdot \|$ is the $L^{\infty }$ norm. We let $B(u,r):=\{v\in \mathbb Z ^d : \|u-v\|\le r\}$ be the ball (or more precisely the box) of radius $r$ around $x\in \mathbb Z ^d$. Next, for a subset $A \subseteq \mathbb Z ^d$ and $u\in \mathbb Z ^d$ we let $d(u,A):=\min \{ \|u-v\| : v\in A\}$. Finally, we let $N(A,r):=\{u\in\Z^d:d(u,A)\leq r\}$ be the $r$-neighbourhood of $A$.

We let $W$ be an infinite, regenerated cyclic walk. A block $B=B(x,r)$ is called \textbf{heavy} at time $t$ if
\begin{equation}
    \big| \big\{ u\in B : \ \exists \, \alpha (t) \le s \le t, \ W(s)=u \big\} \big| \ge r^{5/2}.
\end{equation}
Note that typically a random walk stays $O(r^2)$ time inside $B(x,r)$ so if the cyclic walk behaves like random walk then we do not expect to encounter heavy blocks above a logarithmic scale. 

We say that a time $t$ is \textbf{relaxed} with respect to the walk $W$ if the following holds:
\begin{enumerate}
    \item 
    For all $r\ge \beta ^\delta $ the block $B(W(t),r)$ is not heavy.
    \item
    For all $\alpha (t) \le s \le t-\beta ^{3\delta }$ we have that $\|W(t) -W(s)\|\ge \beta ^\delta $.
\end{enumerate}

We let $\mathcal T$ be the set of relaxed times.

\begin{definition}
A path of length $t\in(4^n,4^{n+1}]$ is relaxed if
\begin{equation}
    \big| \mathcal T \cap [0,t] \big| \ge \big(0.9+1/n\big) t.
\end{equation}
\end{definition}
The required fraction of relaxed times decreases slightly as $n$ increases to allow some room for errors in the induction step.

We define the inductive failure probability bound at time $t_n$ to be
\begin{equation}
    p_n=p_n(\beta ,t_0 ):= \begin{cases}
    e^{-n^3}  &  \text{when } t_n < \beta \\
    e^{-n^2} &\text{when }  t_n \ge  \beta     \end{cases}
\end{equation}

The following 5 assumptions are the inductive hypothesis at time $t_n$.

{\bf (1) Relaxed paths:}
    The path up to time $t_n$ is relaxed with probability at least $1-p_n$.

{\bf (2) Transition probabilities:}
    For all $1 \le t\le t_n$ and $u\in \mathbb Z^d $ we have that $$\mathbb P (W(t)=u) \le t^{-d/2+0.1}.$$

{\bf (3) Traveling far:}
We have that 
\begin{equation}
    \mathbb P \Big( \sup _{s\le t_n} \|W(s)\| \ge n^2\sqrt{t_n} \Big) \le p_n.
\end{equation}
    
{\bf (4) Approximately Brownian:}
    There exists a constant $\sigma=\sigma (t_n)$ with
    \begin{equation}
    |\sigma -\sqrt{2} |\le \max\{3\beta ^{-1/16} -t_n^{-1/16},0\}
\end{equation}
 such that the following holds. There is a coupling of $W$ and a Brownian motion $B$ such that  
    \begin{equation}
        \mathbb P \Big( \max _{t\le t_n} | W(t)-\sigma B(t)| \ge t_n^{2/5} \Big) \le p_n.
    \end{equation}
    
{\bf (5) Pair proximity property:}
Let $W_1$ and $W_2$ be two cyclic walks in the same interchange environment, starting from $u_1,u_2\in \mathbb Z ^d$ at some cyclic times $q_1,q_2\le \beta $. Recall that these are the cyclic walks that satisfy $W_i(0)=u_i$ and that jump along a neighbouring edge at time $s$ if this edge rings at time $s+q_i (\!\!\!\! \mod \beta )$. Suppose that the walks run for a slightly reduced time $t_n':=t_n/n^4$. As before, the walk $W_i$ closes if there exists $ s \le t_n'$ with $s\in \beta \mathbb Z$ such that $W_i(s)=W_i(0)$. We say that $W_1$ and $W_2$ merge if there are $s_1,s_2\le t_n'$ such that $(s_1+q_1)-(s_2+q_2)\in \beta \mathbb Z$ and $W_1(s_1)=W_2(s_2)$. This happens when the first walk reaches $u_2$ at time $q_2-q_1+k\beta$ for $k\in \mathbb Z$ or the second walk reaches $u_1$ at time $q_1-q_2+k\beta $. We let $\Omega =\Omega (u_1,u_2,q_1,q_2)$ be the event that $W_1$ and $W_2$ did not merge and non of them closed up to time $t_n'$. Finally, we write $\fP_n=\fP_n(u_1,u_2,q_1,q_2)$ for the set of times $W_2$ is proximate to the trajectory of $W_1$,
\[
\fP_n=\big\{ s_2 \le t_n'  : \exists \, s_1\le t_n', \ \|W_1(s_1)-W_2(s_2)\| \le 1.9^n \big\}.
\]
The induction assumption is that for any choice of $u_1,u_2,q_1,q_2$ we have that
    \begin{equation}\label{eq:path pair}
       \mathbb P \big(\Omega  \text{ and } | \fP_n | \ge 3.9^n \big) \le p_n .
    \end{equation}
In words, as long as the paths don't close or merge, their trajectories will typically be far with high probability.

\begin{thm}\label{t:inductive}
There exists $\beta_0$ and $n_0$ such that for all $\beta\geq \beta_0$ and $n\geq n_0$ the 5 inductive hypothesises hold.
\end{thm}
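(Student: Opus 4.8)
The proof is a multiscale induction, so the plan is to set up the induction base at some scale $n_0$ (deferred to Section~\ref{sec:base}, relying on random walk estimates since $W$ agrees with a simple random walk up to time $\beta$ and the time scales $t_n$ below $\beta$ give a lot of room via the stronger bound $p_n=e^{-n^3}$) and then carry out the induction step: assuming hypotheses (1)--(5) at all scales up to $n$, deduce them at scale $n+1$. The engine of the step is the coupling of Lemma~\ref{l:t.hit.couple}: a cyclic walk run for time $t_{n+1}$ can be approximated by a concatenation of order $n^4$ independent cyclic walks each of length $t_{n+1}'=t_{n+1}/(n+1)^4$, glued at the stopping times $\tau_{\text{hit}}$. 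The main structural claim (proved across Sections~\ref{sec:relaxed}--\ref{sec:heavy}) is that with probability at least $1-p_{n+1}$ the walk reaches a relaxed time quickly and, at a relaxed time, is with good probability not interacting with its past and not entangled, so the $\tau_{\text{hit}}$ increments are each comparable to $t_{n+1}'$ and the concatenation really does consist of (close to) i.i.d.\ diffusive pieces.

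First I would establish hypothesis (1) and the ``relaxed'' machinery at scale $n+1$. The key inputs are the escape algorithm of Section~\ref{sec:escape} --- every time the walk enters a fresh unexplored block it has a non-negligible probability of escaping far before returning --- together with the heavy/super-heavy block estimates of Section~\ref{sec:heavy}, which control the time spent in any block. Combining these shows that super-heavy blocks around $W(t)$ are rare in time, so the fraction of relaxed times is at least $9/10+1/(n+1)$ with probability $1-p_{n+1}$; the slack between $1/n$ and $1/(n+1)$ absorbs the finitely many non-relaxed excursions coming from the $O(n^4)$ gluing points and from the rare entanglement events handled by the escape algorithm. Next, hypothesis (3) (traveling far) follows from the concatenation plus Claim~\ref{claim:fast} and Lemma~\ref{cor:far}: each of the $O(n^4)$ independent pieces travels at most $(n+1)^2\sqrt{t_{n+1}'}$ except with probability $e^{-3n^2}p_n$ by the inductive (3), and summing the displacements with a union bound over pieces gives the bound at scale $n+1$ with room to spare because $\sqrt{t_{n+1}}=2\sqrt{t_n}$ while we only pay a polynomial-in-$n$ factor.

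Then I would prove hypotheses (2) and (4) together. Using the concatenation, $W(t_{n+1})$ is a sum of $R\approx (n+1)^4$ near-independent increments, each distributed (up to the coupling error from $\tau_{\text{hit}}$ truncations, controlled by the pair proximity property (5) and the relaxed-time estimates) as an independent cyclic walk of length $t_{n+1}'$; each such increment is, by the inductive hypothesis (4) at scale $n$ (applied at the scale of $t_{n+1}'$, which is between $t_{n-1}$ and $t_n$), within $t_n^{2/5}$ of $\sigma(t_{n+1}') B$ on its time interval. Concatenating the Brownian pieces and re-centering the variance gives a single Brownian motion $\sigma(t_{n+1})B$ with $|\sigma(t_{n+1})-\sqrt2|$ controlled: the variance correction per regeneration/truncation is $O(\beta^{-c})$ relative, and the geometric-type sum of errors over scales produces the stated bound $|\sigma-\sqrt2|\le\max\{3\beta^{-1/16}-t_{n+1}^{-1/16},0\}$, the point being that the ``budget'' $3\beta^{-1/16}-t_n^{-1/16}$ grows with $n$ fast enough to pay for the new error at each step. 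The local limit bound (2), $\P(W(t_{n+1})=u)\le t_{n+1}^{-d/2+0.1}$, then follows from the Brownian coupling (4) together with a small-scale smoothing argument: on the last piece of length $t_{n+1}'$ the walk has a local-CLT-type density bound inherited from hypothesis (2) at scale $n$, and convolving with the displacement of the earlier pieces (which has the diffusive spread from (4)) yields the sup bound --- here $d\ge5$ is essential so that $t^{-d/2+0.1}$ is summable enough and the $0.1$ slack survives the convolution.

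Finally, the pair proximity property (5) at scale $n+1$ is proved by the same concatenation applied simultaneously to the two walks $W_1,W_2$: before they merge, the difference $W_1-W_2$ behaves like a diffusive walk of doubled variance (by the Brownian coupling of each), so the expected number of times the two walks are within $1.9^{n+1}$ of each other over a time window $t_{n+1}'$ is $O\big((1.9^{n+1})^d \cdot t_{n+1}'{}^{1-d/2}\big)$, which for $d\ge5$ is far below $3.9^{n+1}$; a union bound over the $O(n^4)$ pieces and a large-deviation bound for the count (using that proximity excursions in disjoint time pieces are near-independent) gives probability at most $p_{n+1}$. The hardest part, and the heart of the paper, is the structural input that at a relaxed time the walk is genuinely ``free'' --- that $\tau_{\text{hit}}$ increments are not much shorter than $t_{n+1}'$, so that the concatenation has the right number of pieces and the coupling errors are summable; this is where the escape algorithm (Section~\ref{sec:escape}) and the heavy-block control (Section~\ref{sec:heavy}) do the real work, and everything above is, modulo careful bookkeeping of the error budgets across scales, a consequence of them together with standard Brownian and random-walk estimates (Section~\ref{sec:brownian}).
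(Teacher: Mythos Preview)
Your high-level architecture matches the paper --- induction base via random-walk estimates for $t_n\le\beta$, induction step via the escape algorithm and heavy-block control feeding into a concatenation of independent shorter walks --- but two of the five substeps have real gaps as you describe them.

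For hypothesis~(3), ``summing the displacements with a union bound over pieces'' cannot work: with $(n+1)^4$ pieces each of displacement up to roughly $\sqrt{t_{n+1}}$, the triangle inequality gives $n^4\sqrt{t_{n+1}}$, not $n^2\sqrt{t_{n+1}}$. The paper instead uses Freedman's martingale inequality on the (mean-zero, bounded) endpoint increments, and does so in two stages (Lemmas~\ref{lem:2/3} and~\ref{l:too.far}) with $t_n^{1/3}$ pieces of lengths $t_n^{1/3}$ and then $t_n^{2/3}$, so that the per-piece bound is $t_n^{2/5}\ll\sqrt{t_n}$ and the concentration closes.

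For hypothesis~(4), ``concatenating the Brownian pieces'' from the inductive coupling accumulates error linearly: with $(n+1)^4$ pieces each carrying error $(t_{n+1}')^{2/5}$ you get total error of order $n^{12/5}t_{n+1}^{2/5}$, which does not close the induction. The paper does \emph{not} glue the inductive Brownian couplings; instead it takes the $t_n^{1/3}$ endpoint increments (each bounded by $n^2 t_n^{1/3}$ after the previous step), normalizes them to have identity covariance, and applies the Zaitsev multidimensional KMT embedding (Theorem~\ref{thm:KMT}) to produce a \emph{fresh} Brownian motion with coupling error $O(n^C t_n^{1/3})\le t_n^{2/5}$. Both the use of KMT and the choice of $t_n^{1/3}$ (rather than $O(n^4)$) pieces are essential here: your decomposition into only polylogarithmically-many pieces leaves each increment of order $\sqrt{t_{n+1}}$, which is too large even for KMT to rescue the $t_{n+1}^{2/5}$ target.
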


Using this theorem we can easily prove Theorem~\ref{t:main.thm}.

\begin{proof}[Proof of Theorem~\ref{t:main.thm}]
A cycle can only be closed at an integer multiple of $\beta$ and therefore by inductive hypothesis
\[
\P[\tau_{\text{reg}}<\infty] \leq \sum_{i=1}^\infty \P[W(i\beta)=0] \leq \sum_{i=1}^\infty (i\beta)^{-d/2+0.1}< 2/\beta <1
\]
for large $\beta$. Theorem~\ref{t:main.thm} follows immediately from the last estimate and the zero-one law.
\end{proof}

\section {Avoiding the history from Relaxed Times}\label{sec:relaxed}

In the next four sections we will assume the induction hypothesis holds for all $1<t_0\le 4$ and all $n'<n$. In particular, it holds at time $4^n$. The following lemma shows that at relaxed times we have a good probability of not interacting with our past.

\begin{lem}\label{lem:relaxed4}
Suppose that $t>0$ is a relaxed time. Then,
\begin{equation}
    \mathbb P \big( \tau _{ \rm hit }(t)\ge t+4^n \ | \ \mathcal F _t \big) \ge 1-C\beta^{-c} .
\end{equation}
\end{lem}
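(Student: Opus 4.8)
The plan is to use Lemma~\ref{l:t.hit.couple} to replace the future of the walk after time $t$ by a fresh, independent cyclic walk $W'$ started from $0$, and then to run a union bound over the finitely many points of the past trajectory $\{W(s'):\alpha(t)<s'<t\}$, bounding the chance that $W'$ ever comes within distance $1$ of one of them at a cyclically matched time. The relaxedness of $t$ is precisely the input that tells us how many past points sit at each distance from $W(t)$.

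First I would dispose of regenerations: by inductive hypothesis~(2), conditionally on $\mathcal F_t$ the probability that the current regeneration block closes during $[t,t+4^n]$ is at most $\sum_{k\beta\in[t,t+4^n]}C(k\beta)^{-d/2+0.1}\le C\beta^{-c}$, since cycles close only at multiples of $\beta$. On the complementary event the whole interval $[t,t+4^n]$ lies inside one block, so Lemma~\ref{l:t.hit.couple} applies and $\{\tau_{\text{hit}}(t)<t+4^n\}$ becomes the event that $\|W'(s)-(W(s')-W(t))\|\le 1$ for some $s\in(0,4^n]$ and some $s'\in(\alpha(t),t)$ with $s'\equiv t+s\pmod\beta$. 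The key structural point is that a given past time $s'$ contributes at most one matched future time with elapsed time $s-t<\beta$, the rest being spaced $\beta$ apart. A union bound over $s'$ and over its matched elapsed times, using the local central limit estimate $\mathbb P(\|W'(u)-x\|\le1)\le Cu^{-d/2+0.1}$ from hypothesis~(2) (legitimate since $u\le 4^n\le t_n$) together with the fact that this probability is super-polynomially small once $u\ll\|x\|^2$ (from the deviation estimate of Lemma~\ref{cor:far}), reduces the proof to showing the resulting sum is at most $C\beta^{-c}$.

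Then I would split the past trajectory by the distance $r=\|W(s')-W(t)\|$. For $r<c\beta^{\epsilon^4}$: relaxedness at the small scales forbids the trajectory from entering the block of side $\approx\beta^{\epsilon^4}$ about $W(t)$ during $[\alpha(t),t-\beta^{\epsilon^3}]$, so every such $s'$ has $t-s'<\beta^{\epsilon^3}$; its single short matched time therefore has elapsed time at least $\beta-\beta^{\epsilon^3}\ge\beta/2$, so summing $Cu^{-d/2+0.1}$ over the matched elapsed times $\gtrsim\beta$ gives $\le C\beta^{-d/2+0.1}$ per point, and since there are at most $C\beta^{d\epsilon^4}$ lattice points in that block this regime contributes $\le C\beta^{d\epsilon^4-d/2+0.1}$. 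For the dyadic shell $2^{m-1}\le r\le 2^m$ with $2^m\ge\beta^{\epsilon^4}$: relaxedness at scale $m$ (the block $W(t)+[-2^{m-1},2^{m-1})^d$ is not super heavy) gives fewer than $6^m$ trajectory points there; for one such point the matched times with elapsed $\ll 4^m$ are negligible, and a short computation bounds the contribution of the remaining ones so that, after multiplying by $6^m$ and summing over the dyadic scales, one obtains geometric series whose ratios are $6\cdot 2^{2-d+0.2}$ and $6\cdot 2^{-d+0.2}$; for $d\ge5$ both are strictly less than $1$ (the first, which forces $d\ge5$, only barely), so the sums converge, are dominated by their smallest term (at scale $2^m\approx\beta^{\epsilon^4}$), and are $\le C\beta^{-c}$. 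Adding the three contributions bounds $\mathbb P(\tau_{\text{hit}}(t)<t+4^n\mid\mathcal F_t)$ by $C\beta^{-c}$.

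I expect the main obstacle to be exactly the convergence of the shell sum: one must overcome the super heavy count $6^m$, which grows with the scale, against nothing more than the diffusive return probability $\sim(4^m)^{1-d/2}$, and this is where $d\ge5$ is essential (it is equivalent to $6<2^{d-2}$); moreover the bound has to be uniform in the horizon $4^n$, which is possible only because $\sum_k(k\beta)^{-d/2+0.1}$ converges regardless of the number of terms. A secondary difficulty is that relaxedness controls the trajectory inside the innermost block of side $\beta^{\epsilon^4}$ only crudely, by volume, so one has to use the period $\beta$ to ensure this poorly-understood part of the history cannot be revisited before $W'$ has spread over a region of diameter $\sqrt\beta\gg\beta^{\epsilon^4}$, by which time the local central limit bound of hypothesis~(2) is effective.
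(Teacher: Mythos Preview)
Your approach is essentially the paper's: couple the future with a fresh walk, split the past into a near region (within $\beta^{\epsilon^4}$ of $W(t)$) and far dyadic shells, and on each shell play the super-heavy bound $6^m$ against a Green's-function-type decay. The paper packages the far-shell computation as Claim~\ref{claim:A} and the near-part escape as Claim~\ref{claim:W big}, but the content is the same, and your binding ratio $6\cdot 2^{2-d+0.2}<1$ for $d\ge 5$ is precisely what drives Claim~\ref{claim:A}.

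One point needs care. The relaxed condition bounds the number of distinct \emph{lattice points} visited in each block, not the number of \emph{visit times}; a single point may be visited at many different $s'$, each contributing a different residue class of matched elapsed times. So your line ``$C\beta^{-d/2+0.1}$ per point'' (which sums over the $\beta$-spaced matched times of a \emph{single} visit) followed by multiplication by the $C\beta^{d\epsilon^4}$ lattice points conflates a per-visit bound with a per-point count. For the far shells this is harmless: drop cyclic matching entirely and bound the probability that the fresh walk \emph{ever} hits the point, which is the Green's-function estimate yielding your first ratio $6\cdot 2^{2-d+0.2}$ (your second ratio is then unnecessary). For the near part you cannot drop cyclic matching (the ever-hit probability is order one there), and this is where the paper's argument is cleaner: by Claim~\ref{claim:W big} the fresh walk lies outside the ball of radius $\beta^{2\epsilon^3/5}$ for all elapsed times $\ge\beta^{\epsilon^3}$ with probability $1-C\beta^{-c}$, so any near hit must occur at elapsed time $<\beta^{\epsilon^3}$, forcing the matched past time $s'<t-\beta/2$ and contradicting the small-scale relaxed condition directly---no counting of visits required.
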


For the proof of the lemma we will need the following claims.

\begin{claim}\label{claim:A}
Let $L\ge 1$ and let $A\subseteq \big\{ u\in \mathbb Z ^d: \|u\|\ge L \big\}$. Suppose that for all $r\ge L$ we have that $\big| A\cap [-r ,r ]^d \big| \le r^{8/3}$. Then
\begin{equation}
    \mathbb P \big( \forall s \le 4^n, \ W(s)\notin A  \big) \ge 1-CL^{-c}.
\end{equation}
\end{claim}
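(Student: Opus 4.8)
The plan is to reduce everything to a union bound over the points of $A$. Writing $q_u:=\mathbb P(\exists\,s\le 4^n: W(s)=u)$, it suffices to prove $\sum_{u\in A}q_u\le CL^{-c}$. We may assume $L\ge L_0$ for a large constant $L_0=L_0(d)$, since otherwise $1-CL^{-c}$ is vacuous for large $C$; in particular every $u\in A$ satisfies $\|u\|\ge L_0$. Decompose $A$ into dyadic shells $A_\ell:=A\cap\big([-2^\ell,2^\ell]^d\setminus[-2^{\ell-1},2^{\ell-1}]^d\big)$, which are empty for $2^\ell\lesssim L$ and satisfy $|A_\ell|\le 6.1^\ell$ with $\|u\|\ge 2^{\ell-1}$ for $u\in A_\ell$. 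The heart of the matter is a Green's-function-type bound
\[
q_u\le C\,\|u\|^{-(d-2)+1/5}\big(\log(2+\|u\|)\big)^{C}\qquad\text{for all }\|u\|\ge L_0.
\]
Granting this, since $d\ge 5$ we have $6.1<2^{\,d-2-1/5}$ (indeed $2^{2.8}\approx 6.96$), so $\sum_{u\in A}q_u\le C\sum_{\ell\gtrsim\log_2 L}6.1^\ell\,2^{-(d-2-1/5)\ell}\ell^{C}\le C(\log L)^C\,L^{-c}\le CL^{-c}$, which is the claim. This is the only place the hypothesis $d\ge 5$ enters: in dimension $4$ the ratio $6.1/2^{\,d-2-1/5}$ exceeds $1$ and the sum diverges.

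To prove the bound on $q_u$ I would relate hitting to occupation. Fix $\delta:=1/(100d)$ and let $H_u:=\inf\{s\le 4^n:W(s)=u\}$. The key observation is that $W$ \emph{cannot be forced away from $u$} during $[H_u,H_u+\delta]$: a forced jump out of $u$ at a time $t$ in this window would require a previous visit $W(t-k\beta)=u$ with $k\ge1$, hence with $t-k\beta<H_u$ (using $\delta<\beta$), contradicting the definition of $H_u$. Therefore, on the event that the driving walk $X$ has no clock ring at the $2d$ edges incident to $X(H_u)$ during $[H_u,H_u+\delta]$ — an event which, by the strong Markov property of $X$ at the stopping time $H_u$, has conditional probability $e^{-2d\delta}$ given $\mathcal F_{H_u}$ — the walk $W$ takes no step at all and stays at $u$. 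This gives $\mathbb E\big[\int_{H_u}^{H_u+\delta}\mathbf 1\{W(s)=u\}\,ds\mid\mathcal F_{H_u}\big]\ge\delta e^{-2d\delta}$ on $\{H_u\le 4^n\}$, and taking expectations (the integral lies in $[0,4^n+\delta]$ there) yields
\[
q_u\;\le\;\big(\delta e^{-2d\delta}\big)^{-1}\int_0^{4^n+\delta}\mathbb P\big(W(s)=u\big)\,ds\;=:\;C\,\Gamma_u .
\]

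It then remains to estimate $\Gamma_u$. Set $t^*:=\min\{\|u\|^2/(\log\|u\|)^{10},\,4^n+\delta\}$ and split $\Gamma_u=\int_0^{t^*}+\int_{t^*}^{4^n+\delta}$. For $s\le t^*$ one has $\|u\|\ge\sqrt s\,(\log\|u\|)^5$, so $W(s)=u$ forces the walk to travel far beyond its diffusive scale; bounding $\mathbb P(W(s)=u)\le\mathbb P(\sup_{r\le s}\|W(r)\|\ge\|u\|)$ and using Lemma~\ref{cor:far} for $s\le\|u\|/(10d)$ and, for larger $s\le 4^n$, inductive hypothesis~(3) (realising $s$ as a scale $t_m$ with $m<n$, for which $\|u\|\ge m^2\sqrt{t_m}$), one obtains $\mathbb P(W(s)=u)\le\exp(-c(\log\|u\|)^2)$ throughout, so the first integral is at most $t^*\exp(-c(\log\|u\|)^2)\le\|u\|^{-d}$, negligible. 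The second integral is nonempty only when $\|u\|\lesssim 2^n\cdot\mathrm{polylog}$, in which case $t^*\ge 2$ by the choice of $L_0$; inductive hypothesis~(2) then gives $\mathbb P(W(s)=u)\le s^{-d/2+1/10}$ for $1\le s\le 4^n$, and since $d\ge 5$ makes the exponent $<-1$,
\[
\int_{t^*}^{4^n+\delta}\mathbb P(W(s)=u)\,ds\le\int_{t^*}^{\infty}s^{-d/2+1/10}\,ds+C\delta\,(4^n)^{-d/2+1/10}\le C\,(t^*)^{-d/2+11/10}\le C\,\|u\|^{-(d-2)+1/5}(\log\|u\|)^{C},
\]
where the stray interval $[4^n,4^n+\delta]$ is handled by the fast-movement estimate of Claim~\ref{claim:fast}, reducing to $\mathbb P(W(4^n)\in N(u,O(1)))$. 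Combining the two pieces gives the desired bound on $\Gamma_u$, hence on $q_u$.

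The genuine difficulty is the occupation-to-hitting step: because $W$ is \emph{not} Markov one cannot restart it after $H_u$, and a priori the walk could be forced away from $u$ immediately; the resolution is the observation that a freshly discovered site triggers no forced jump, together with the strong Markov property of the \emph{driving} simple random walk $X$ (which in turn rests on Lemma~\ref{l:t.hit.couple}). The rest — making the $\Gamma_u$ estimate uniform in $n$, which is automatic since $\Gamma_u$ is super-polynomially small once $\|u\|$ is much larger than $\sqrt{4^n}$, and tracking polylogarithmic factors through the dyadic sum — is routine.
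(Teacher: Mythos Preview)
Your argument is correct and takes a genuinely different route from the paper. The paper decomposes \emph{time} into dyadic scales $[4^{\ell-1},4^\ell]$: for each integer $s$ in the scale it bounds $\mathbb P\big(W(s)\in N(A,\ell^2)\big)$ via the inductive transition probabilities together with the traveling-far hypothesis, passes from integer times to all times via the fast-movement stopping time $\tau_{\rm fast}$, and then sums over $\ell$. You instead decompose \emph{space}, taking a union bound over $u\in A$ and proving a Green's-function-type estimate on the hitting probability $q_u$. Your key additional ingredient is the hitting-to-occupation step, where the non-Markovianity of $W$ is circumvented by the nice observation that at the \emph{first} visit to $u$ no forced jump can be triggered (a forced jump would require a previous visit to $u$ at a cyclically equivalent time); this lets you invoke the strong Markov property of the driving walk $X$. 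Two minor remarks: the strong Markov property of $X$ is immediate from $X$ being a Poisson jump process and does not rely on Lemma~\ref{l:t.hit.couple}; and one should also note that no regeneration can occur in $[H_u,H_u+\delta]$ on the relevant event, which is automatic since the walk stays at $u\neq 0$. The paper's approach is more direct in that it sidesteps the hitting-to-occupation issue entirely; yours is closer in spirit to classical random-walk Green's-function arguments and yields a pointwise estimate $q_u\le C\|u\|^{-(d-2)+1/5}(\log\|u\|)^C$ which may be independently useful.
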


\begin{proof}
Let $\ell _0 := \lfloor \log _{4} L \rfloor $ and for all $ \ell _0 \le \ell \le n$ consider the set $A_\ell :=N(A,\ell ^2)$. By the traveling far property in the induction hypothesis we have that $\mathbb P (\|W(s)\| \ge \ell ^22^{\ell} )\le e^{-c\ell ^2}$ for all $4^{\ell -1} \le s\le 4^{\ell}$. Thus, using the inductive transition probabilities and the assumption on the density of $A$ we obtain for all $4^{\ell -1}\le s \le 4^{\ell}$,
\begin{equation}
\begin{split}
    \mathbb P (W(s)\in A_\ell ) &\le \mathbb P (\|W(s)\| \ge \ell ^2 2^\ell) +\mathbb P \big( W(s) \in A_\ell  \cap [-\ell ^2 2^{\ell} ,\ell ^2 2^{\ell }]^d \big)\\
    &\le e^{-c\ell ^2 }+ C\ell ^C 2^{8\ell/3 } s^{-d/2+0.1} \le 2 ^{-2.1\ell},
\end{split}
\end{equation}
where in the last inequality we used that $d\ge 5$. Thus,
\begin{equation}
\begin{split}
    \mathbb P \big( \exists s\in \mathbb N \cap [4^{\ell -1},4^{\ell }], \ W(s)\in A_\ell   \big) \le e^{-c\ell }.
\end{split}
\end{equation}
Next, consider the event $ \mathcal D _\ell :=  \{ \tau _{\text{fast}} (\ell ^2) > 4^{\ell } \} $ and note that by Lemma~\ref{claim:fast} we have that $\mathbb P (
\mathcal D _\ell )\ge 1- e^{-c\ell ^2}$. Moreover, on $\mathcal  D_\ell $ for all $4^{\ell -1} \le s \le 4^{\ell}$ such that $W(s)\in A$ we have that  $W(\lceil s \rceil )\in A _\ell  $. It follows that
\begin{equation}
    \mathbb P \big( \exists s\in [4^{\ell -1} ,4^{\ell }], \ W(s)\in A  \big)  \le \mathbb P \big( \exists s\in \mathbb N \cap [4^{\ell -1} ,4^{\ell }], \ W(s)\in A_\ell   \big)+ \mathbb P (\mathcal D _\ell ^c) \le e^{-c\ell}.
\end{equation}
Finally, using that $A \subseteq  \{u\in \mathbb Z ^d :  \|u\|\ge L \}$ and a union bound we obtain
\begin{equation}
\begin{split}
    \mathbb P \big( \exists s\le 4^{n}, \ W(s)\in A  \big) \le \mathbb P \Big( \max _{s\le L} \|W(s)\| \ge L \Big) &+ \sum _{\ell =\ell _0  } ^{n} \mathbb P \big( \exists s\in [4^{\ell -1},4^{\ell }], \ W(s)\in A  \big)\\
    &\le CL^{-1} +C\sum _{\ell =\ell _0 } ^{\infty } e^{-c\ell } \le CL ^{-c},
\end{split}
\end{equation}
where in the second inequality we used the traveling far property of the inductive assumption.
\end{proof}

\begin{claim}\label{claim:W big}
For any $t\le 4^n$ and $t^{2/5}\le L\le t^{1/2}$ we have that 
\begin{equation}\label{eq:W large}
    \mathbb P \big(\forall t\le s \le 4^n, \ \|W(s)\|\ge L \big) \ge 1-CL^{d-2}t^{-(d-2)/2}.
\end{equation}
\end{claim}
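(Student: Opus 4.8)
The plan is to estimate the complementary probability
\[
q \;:=\; \mathbb P\big(\exists\, s\in[t,4^n]\colon \|W(s)\|\le L\big)
\]
and show $q\le CL^{d-2}t^{-(d-2)/2}$. First I dispose of a trivial range: if $t<4^{n_0}$ then $L^{d-2}t^{-(d-2)/2}\ge (t^{2/5})^{d-2}t^{-(d-2)/2}=t^{-(d-2)/10}$ is bounded below by a constant depending only on $d$, so the asserted inequality holds with a large enough $C=C(d)$ (its right side being $\le 0$). Hence I may assume $t\ge 4^{n_0}$ and set $m_0:=\lfloor\log_4 t\rfloor$, so that $m_0\ge n_0$ and $2^{m_0}\asymp\sqrt t$.

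Next I decompose $[t,4^n]$ dyadically in time and compare each piece to Brownian motion at the matching scale using inductive hypothesis~(4). Put $J_{m_0}:=[t,4^{m_0+1}]$ and $J_m:=[4^m,4^{m+1}]$ for $m_0<m<n$, so $[t,4^n]\subseteq\bigcup_{m=m_0}^{n-1}J_m$, and write $\underline{J_{m_0}}:=t$, $\underline{J_m}:=4^m$. For each $m$ invoke hypothesis~(4) at scale $m+1$ (which lies in the inductive range $n_0\le m+1\le n$): off an event of probability at most $p_{m+1}$ there is a Brownian motion $B^{(m+1)}$ started at the origin with $\max_{s\le 4^{m+1}}|W(s)-\sigma_{m+1}B^{(m+1)}(s)|\le (4^{m+1})^{2/5}$, and on this event $\|W(s)\|\le L$ forces $\|B^{(m+1)}(s)\|\le \rho_m:=(L+(4^{m+1})^{2/5})/\sigma_{m+1}$. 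Therefore
\[
q \;\le\; \sum_{m=m_0}^{n-1}\Big(p_{m+1}+\mathbb P\big(\exists\, s\ge \underline{J_m}\colon \|B^{(m+1)}(s)\|\le \rho_m\big)\Big).
\]

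To control the Brownian terms I use the transience estimate that for standard Brownian motion from the origin in dimension $d\ge 3$ and all $\rho\ge 0$, $T>0$,
\[
\mathbb P\big(\exists\, s\ge T\colon \|B(s)\|\le\rho\big)\;\le\; C(\rho/\sqrt T)^{d-2},
\]
a fact of the type assembled in Section~\ref{sec:brownian} (one sums over dyadic windows $[2^jT,2^{j+1}T]$, bounds each by $\mathbb E\big[(\rho/\|B(2^jT)\|)^{d-2}\wedge 1\big]\asymp(\rho/\sqrt{2^jT})^{d-2}$ via the hitting formula $\mathbb P_x(\text{hit }\bar B(0,\rho))=(\rho/\|x\|)^{d-2}$ and the finiteness of $\mathbb E\big[\|B(1)\|^{-(d-2)}\big]$, and the geometric series $\sum_j 2^{-j(d-2)/2}$ converges since $d>2$; for $\rho\ge\sqrt T$ the bound is trivial). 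Since $\sigma_{m+1}$ is within $o(1)$ of $\sqrt 2$ one has $\rho_m\le C(L+(4^{m+1})^{2/5})$, so the $m$-th Brownian term is at most $C\big(L/\sqrt{\underline{J_m}}\big)^{d-2}+C\big((4^{m+1})^{2/5}/\sqrt{\underline{J_m}}\big)^{d-2}$. Summing over $m\ge m_0$: the first terms form a geometric series of ratio $2^{-(d-2)}$ with leading term of order $(L/\sqrt t)^{d-2}$, hence sum to $CL^{d-2}t^{-(d-2)/2}$; the second terms equal $C\,2^{4(d-2)/5}2^{-m(d-2)/5}$, a geometric series of ratio $2^{-(d-2)/5}$ with leading term of order $(2^{m_0})^{-(d-2)/5}\asymp t^{-(d-2)/10}$, hence sum to $Ct^{-(d-2)/10}$. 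Finally $\sum_m p_{m+1}$ is super-polynomially small in $n$, hence negligible against $L^{d-2}t^{-(d-2)/2}\ge 4^{-n(d-2)/10}$, while the hypothesis $L\ge t^{2/5}$ gives exactly $t^{-(d-2)/10}\le L^{d-2}t^{-(d-2)/2}$. Combining, $q\le CL^{d-2}t^{-(d-2)/2}$, as required.

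The main obstacle — and the reason the hypothesis reads $L\ge t^{2/5}$ — is the second family of scales, the deep ones where the Brownian coupling error $(4^{m+1})^{2/5}$ exceeds $L$: there one learns only that the walk has returned to within $\sim(4^{m+1})^{2/5}$ of the origin rather than within $L$, and the resulting contribution is precisely of order $t^{-(d-2)/10}$; the constraint $L\ge t^{2/5}$ is the sharp threshold at which this stays dominated by the target $L^{d-2}t^{-(d-2)/2}$. The remaining care is purely organizational: choosing the dyadic time windows and the scale at which hypothesis~(4) is invoked so that the coupling applies on each window and the leading term comes out of order $(L/\sqrt t)^{d-2}$, together with the trivial treatment of the genuinely bounded range $t<4^{n_0}$ where hypothesis~(4) is unavailable.
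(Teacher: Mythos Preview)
Your proof is correct and follows essentially the same approach as the paper: dyadic decomposition of $[t,4^n]$ in time, coupling with Brownian motion on each window via inductive hypothesis~(4), and the transience estimate of Lemma~\ref{l:BM.ball.exit}, with the split at the scale where the coupling error $(4^{m+1})^{2/5}$ overtakes $L$. One small slip: the coupling-failure contribution $\sum_{m\ge m_0} p_{m+1}$ is $O(e^{-m_0^2})$ rather than ``super-polynomially small in $n$'' (consider $m_0\ll n$); the correct comparison is to $L^{d-2}t^{-(d-2)/2}\ge t^{-(d-2)/10}\asymp 4^{-m_0(d-2)/10}$, which indeed dominates $e^{-m_0^2}$ once $m_0\ge n_0$.
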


\begin{proof}
We prove the claim using the inductive Brownian approximation. Let $\ell _0:=\lfloor \log _4 t \rfloor $ and for $\ell \ge \ell _0$ let $B_\ell $ be the Brownian motion from the induction hypothesis at time $4^{\ell}$. Define the event
\begin{equation}
    \cD := \bigcap _{\ell = \ell _0 } ^n \bigg\{ \max _{s\le 4^\ell }\|W(s)-\sigma_\ell B_\ell (s)\| \le 4^{2\ell /5} \bigg\}
\end{equation}
and note that by the induction hypothesis we have that $\mathbb P (\cD) \ge 1-Ce^{-c\ell _0^2}$. Next, define
\begin{equation}
    \mathcal C := \bigcap _{\ell =\ell _0  } ^n \big\{ \inf_{s\in [4^{\ell -1},\le 4^{\ell}]} \ \|\sigma_\ell B_\ell (s)\| \ge 2\max (4^{2\ell /5},L) \big\}.
\end{equation}
Let $\ell _1$ be the first integer for which $4^{2\ell /5}>L$. By Lemma~\ref{l:BM.ball.exit} we have that
\begin{equation}
\begin{split}
    \mathbb P (\mathcal C ^c) \le \sum _{\ell =\ell _0}^{\ell _1} \mathbb P \Big( \inf_{ s>4^{\ell -1}} \ \|\sigma_\ell B_\ell (s)\| \ge 2L  \Big)+\sum _{\ell =\ell _1}^{n} \mathbb P \Big( \inf_{ s>4^{\ell -1}} \ \|\sigma_\ell B_\ell (s)\| \ge 2\cdot 4^{2\ell /5}  \Big)\\
    \le C\sum _{\ell =\ell _0}^{\ell_1 } L^{d-2}2^{-(d-2)\ell }  + C\sum _{\ell =\ell _1}^{n} 4^{-(d-2)\ell /10}  \le CL^{d-2}t^{-(d-2)/2}
\end{split}
\end{equation}
which completes the proof since on $\cC \cap \cD $ the event in \eqref{eq:W large} holds. 
\end{proof}

We now prove Lemma~\ref{lem:relaxed4}.

\begin{proof}[Proof of Lemma~\ref{lem:relaxed4}]
Without loss of generality, suppose that $t<\tau _{\text{reg}}$. Indeed, the same holds between any two regeneration times since we ignore the past before the last regeneration in the definition of $\tau _{\text{hit}}(t)$. Let $\tilde{W}$ be a regenerated walk independent of $W$ and let $W'(s):=W(t)+\tilde{W}(s)$ for all $s>0$.  By Lemma~\ref{l:t.hit.couple} we can couple $W$ and $W'$ such that for all $t\le s \le \tau _{\text{hit}} (t)$ we have that $W(s)=W'(s-t)$. Next, define the set 
\begin{equation}
    A:=\big\{ u\in \mathbb Z ^d : \|u-W(t)\|\ge \beta^{\delta }/2 \text{ and } \exists s\in [0,t], \  \|W(s)-u\| \le 1 \big\}
\end{equation}
and note that for all $r\ge \beta ^{\delta }/2$ we have 
\begin{equation}
 \big| A \cap B(W(t),r) \big| \le 2d \cdot  \big| \big\{ u\in B(W(t),2r) : \exists s\le t,  \ W(s)=u  \big\} \big| \le  Cr^{5/2},   
\end{equation}
where the last inequality holds as $t$ is relaxed. Thus, using Claim~\ref{claim:A} and that $\tilde{W}$ is independent of $\mathcal F _t$ we obtain 
\begin{equation}\label{eq:12}
    \mathbb P \big( \forall  s\le 4^n, \ W'(s)\notin A  \ | \ \mathcal F _t \big) \ge 1-\beta^{-c}.
\end{equation}
Moreover, by Claim~\ref{claim:W big}
\begin{equation}\label{eq:13}
    \mathbb P \big( \forall \beta^{3\delta } \le s \le 4^n, \ \|W'(s)-W(t)\|\ge \beta^{\delta } \ | \ \mathcal F_t   \big) \ge 1-\beta^{-c}.
\end{equation}
It suffices to show that on the intersection of \eqref{eq:12} and \eqref{eq:13} we have that $\tau _{\text{hit}}(t)\ge t+4^n$. To this end, by the definition of $\tau _{\text{hit}}(t)$ we have that
\begin{equation}
 \|W'(\tau _{\text{hit}}(t)-t)-W(s)\| \le 1   \text{ for some }  s\le t \text{ with } s\in \tau _{\text{hit}}(t)+\beta \mathbb Z.
\end{equation}
If $\|W'(\tau _{\text{hit} }(t)-t)-W(t)\| \ge \beta^{\delta }/2$ then $W'(\tau _{\text{hit} }(t)-t)\in A$. Thus, on the event in \eqref{eq:12} we have $\tau _{\text{hit}}(t)-t\ge 4^n$. If $\|W'(\tau _{\text{hit} }(t)-t)-W(t)\| < \beta^{\delta }/2$ then on the event in \eqref{eq:13} we have $\tau _{\text{hit}}(t)-t < \beta^{3\delta }< \beta /2$ and therefore $s< t-\beta/2$ (since $s\in \tau _{\text{hit}}(t)+\beta \mathbb Z$). However, by the definition of relaxed time, for such $s$ we have $\|W(s)-W(t)\|\ge \beta^{\delta }$ which is a contradiction since $\|W'(\tau _{\text{hit} }(t)-t) -W(s)\|\le 1$ and $\|W'(\tau _{\text{hit} }(t)-t)-W(t)\| < \beta^{\delta }/2$. It follows that $\tau _{\rm hit}(t)\ge t+4^n$ on the intersection of the events in 
\eqref{eq:12} and \eqref{eq:13}.
\end{proof}

\section{The escape algorithm}\label{sec:escape}

We will use the results of this section in order to escape fast from (possibly) messy situations. Assume that the induction hypothesis holds at all levels $n'< n$. Throughout the paper we let $\mathcal B _m$ be the set of blocks of the form $u+[0,2^m)^d$ where $u\in 2^m\mathbb Z ^d$. The set $\mathcal B _m$ is a partition $\mathbb Z ^d$ into disjoint blocks and $\mathcal B_m$ is a refinement of $\mathcal B _{m+1}$. We fix $j=j_n$ to be the unique integer for which $n^{20}\le 2^j<2n^{20}$. Finally, let $\gamma =\gamma (d)>2$ be a sufficiently large constant that will be determined later. For a time $t>0$ define the escape event
\begin{equation}\label{eq:def of E}
    \mathcal E (t):= \big\{ \forall 4^j \le s \le  4^n , \ \|W(t+s)-W(t)\| \ge \sqrt{s}/n \big\}.
\end{equation}
Finally, define the escape stopping time
\begin{equation}
    \tau _{\text{esc}}:=\inf \Big\{ t>0 \ : \  t \text{ is the first time we enter a block }B\in \mathcal B _j  \text{ and } \mathbb P (\mathcal E (t) \mid \mathcal F _t ) \le n^{-\gamma } \Big\}.
\end{equation}
In words, before $\tau _{\text{esc}}$ every time we enter a block $B\in \mathcal B _j$ for the first time, we have a chance of at least $n^{-\gamma }$ to escape far in a short amount of time.

The main result of this section is the following theorem.
\begin{thm}\label{thm:escape}
We have that $\mathbb P (\tau _{\rm esc}<t_n\wedge \tau _{\rm reg}) \le e^{-cn^3}$.
\end{thm}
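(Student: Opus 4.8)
The plan is to reduce Theorem~\ref{thm:escape} to a uniform bound at a single block entry, build an explicit multiscale escape procedure, and then estimate the probability that the past obstructs the procedure on too many scales at once.

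\textbf{Reduction.} For $B\in\cB_j$ let $\tau_B$ be the first time $W$ enters $B$, a stopping time. By Lemma~\ref{cor:far}, outside an event of probability at most $Ce^{-c4^n}\le e^{-cn^4}$ the walk stays in a box of side $Ct_n$ before time $t_n$, and therefore enters at most $(Ct_n2^{-j})^d\le e^{Cn}$ blocks of $\cB_j$ before then. Hence it suffices to show that for each fixed $B\in\cB_j$
\begin{equation}\label{eq:esc.single}
\P\Big(\tau_B<t_n\wedge\tau_{\rm reg},\ \P\big(\|W(\tau_B+k^34^k)-W(\tau_B)\|\ge k2^k\mid\cF_{\tau_B}\big)\le n^{-C_1}\Big)\le e^{-cn^4},
\end{equation}
since $e^{Cn}e^{-cn^4}\le e^{-(c/2)n^4}$. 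Below write $t=\tau_B$, $x_0=W(t)$, and assume $t<t_n\wedge\tau_{\rm reg}$.

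\textbf{The escape procedure and clean crossings.} Conditionally on $\cF_t$ we construct an event $\cG$, measurable with respect to the driving walk $X$ on $[t,\infty)$, on which $\|W(t+k^34^k)-W(t)\|\ge k2^k$, and we lower bound $\P(\cG\mid\cF_t)$. The event $\cG$ prescribes a single multiscale climb in the $+e_1$ direction: for $i=j,j+1,\dots,k-1$ the walk advances a further $2^i$ in its $e_1$-coordinate within an allotted time $b_i=C(i-j+1)4^i$, and at the top scale it advances a further $k2^k$ within the remaining time, which exceeds $\tfrac12k^34^k$ because $\sum_{i<k}b_i\le Ck4^k$. The total time is at most $k^34^k$ and the net displacement at least $k2^k$ after adjusting constants, and since $k\le n/2$ all scales and budgets lie strictly below level $n$, so the inductive hypotheses at levels $n'<n$ are available. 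For a crossing at scale $i$ reached at time $s$, I would couple the future of $W$ after $s$ with an independent cyclic walk $W'$ via Lemma~\ref{l:t.hit.couple}, the coupling being exact up to $\tau_{\rm hit}(s)$; if $s$ is relaxed then Lemma~\ref{lem:relaxed4} gives $\tau_{\rm hit}(s)\ge s+b_i$ with conditional probability $\ge1-C\beta^{-c}$, and then the inductive ``approximately Brownian'' and ``traveling far'' estimates applied to $W'$ yield the required $e_1$-progress with conditional probability at least $1-e^{-c(i-j)}$ — the budget $b_i$, a growing multiple of the diffusive time $4^i$, producing this summable error, and the even more generous top-scale budget giving probability $\ge\tfrac12$ there. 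Multiplying, if the walk reaches every scale away from its history then $\P(\cG\mid\cF_t)\ge\tfrac14$, far above $n^{-C_1}$; this is the ``clean'' regime settled by Lemma~\ref{lem:relaxed4}.

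\textbf{Entangled crossings.} When scale $i$ is reached at a non-relaxed time there is a super-heavy block near the walk and the crossing is obstructed. The escape algorithm deals with this by squeezing the walk out of the congested region through its least occupied face: coupling once more with an independent cyclic walk and bounding how much of the relevant annulus the history can fill, exactly as in Claims~\ref{claim:A} and~\ref{claim:W big} (using the heaviness definitions of Section~\ref{sec:induction} together with the inductive transition bound), one shows that such a manoeuvre still succeeds with conditional probability at least $n^{-C_2}$ for a constant $C_2=C_2(d)$. Consequently, for \emph{every} realisation of $\cF_t$,
\[
\P(\cG\mid\cF_t)\ \ge\ \tfrac14\,n^{-C_2H},
\]
where $H=H(\cF_t)$ is the number of congested scales along the climb; so once $C_1$ is large relative to $C_2$, $\{\P(\cG\mid\cF_t)\le n^{-C_1}\}\subseteq\{H>C_1/(2C_2)\}$.

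\textbf{The main obstacle.} It remains to prove $\P\big(H>C_1/(2C_2),\ \tau_B<t_n\wedge\tau_{\rm reg}\big)\le e^{-cn^4}$, i.e.\ that the past can obstruct the escape on many scales simultaneously only with exponentially small probability; this is the heart of the section and the step I expect to be hardest. The strategy is to convert $\{H\text{ large}\}$ into a deterministic condition on the trace $\{W(r):\alpha(t)\le r\le t\}$ near $x_0$ — roughly, that this trace is atypically dense on many dyadic scales centred near $x_0$ at once — and then to bound the probability of such a trace using inductive hypothesis~(1) (relaxed paths, and hence scarcity of super-heavy blocks, being typical) together with a counting/large-deviation estimate over the finitely many relevant block configurations. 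The exponent $n^4$ enters because the smallest scale in the climb has side $2^j\asymp n^{40}$: an obstruction there forces an atypical event on a region of that size, whose probability is exponentially small in a fixed power $\asymp(2^j)^{1/10}\asymp n^4$ of the side length, which comfortably beats the $e^{Cn}$ loss from the reduction step.
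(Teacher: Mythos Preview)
Your reduction step is fine and matches the paper. The decisive gap is in the ``main obstacle'' step, and it is not merely a missing detail but a wrong source for the exponent $n^4$.

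You propose to bound $\P(H>\text{const})\le e^{-cn^4}$ by arguing that the \emph{history} $\{W(r):\alpha(t)\le r\le t\}$ is unlikely to be dense on many scales near $x_0=W(\tau_B)$. But the only tool you invoke for this is inductive hypothesis~(1), and that gives a failure probability of at most $p_{n-1}\ge e^{-(n-1)^3}$, never $e^{-cn^4}$. Worse, hypothesis~(1) only says that a $9/10$ fraction of times are relaxed; it says nothing about the particular random time $\tau_B$, which could land precisely where the history is densest. The result that would control heavy blocks at arbitrary stopping times is Theorem~\ref{thm:heavy}, but that theorem \emph{uses} Theorem~\ref{thm:escape} as input (via $\hat\tau$ in Section~\ref{sec:heavy}), so you cannot appeal to it here. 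Your heuristic that ``an obstruction at scale $2^j\asymp n^{40}$ has probability $e^{-(2^j)^{1/10}}\asymp e^{-n^4}$'' has no basis in the available hypotheses. The ``entangled crossings'' step has the same problem: without control on super-heavy blocks you have no a~priori reason for even an $n^{-C_2}$ lower bound at a non-relaxed time.

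The paper gets $e^{-cn^4}$ from a completely different mechanism that never analyses the history at all. It exploits the one piece of structure you are given: $\tau_B$ is the \emph{first} entry into $B$, so the interior of $B$ is unexplored. The paper splits $B=B^-\cup B^+$, places $n^4$ well-separated points $p_1,\dots,p_{n^4}$ in $B^+$, and launches probe cyclic walks $W_i$ from these points (Lemma~\ref{lem:Ai} and Corollary~\ref{cor:A}). Each probe, conditioned on the previous ones, escapes to distance $k2^{k+1}$ with probability $\ge c$ by the inductive hypothesis applied to the \emph{probe} (not to $W$); hence $\P(\cA^c)\le(1-c)^{n^4}\le e^{-cn^4}$. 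This is an unconditional bound on an event of the environment, so by Markov it transfers to $\P(\cA\mid\cF_{\xi_B})\ge\tfrac12$ except on an event of probability $\le e^{-cn^4}$. The $n^{-C_1}$ then arises separately, as the cost of steering $W$ through the fresh half $B^-$ to the successful probe at the correct cyclic time (Lemma~\ref{lem:connect}, Claims~\ref{claim:chain} and~\ref{claim:green}). In short: the paper's $e^{-cn^4}$ comes from $n^4$ independent trials in virgin territory, not from any rarity of bad histories.
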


Theorem~\ref{thm:escape} follows immediately from the following proposition. From now on we fix a block $B=u+[0,2^j)^d\in \mathcal B _j$ and let $\xi _B$ to be the first time we enter $B$. 

\begin{prop}\label{prop:fix B}
We have that 
\begin{equation}
    \mathbb P \Big( \xi _B \le t_n \wedge \tau _{\rm reg} \ \ {\rm and } \ \ \mathbb P \big( \mathcal E (\xi _B ) \ | \ \mathcal F _{\xi _B} \big) \le n^{-\gamma } \Big) \le e^{-cn^3}.
\end{equation}
\end{prop}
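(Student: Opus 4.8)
The plan is to argue that the bad event — entering the fixed block $B\in\mathcal B_j$ before $t_n\wedge\tau_{\mathrm{reg}}$ while having conditional escape probability at most $n^{-C_1}$ — is so unlikely that even after a union bound over the $O(t_n^{d/2})$ (in fact fewer, because $B$ is at distance at most $n^2\sqrt{t_n}$ of the origin with overwhelming probability) possible blocks it contributes at most $e^{-cn^4}$. The first step is to bound the probability of ever reaching $B$ at all. Since $B$ has side length $2^j\asymp n^{40}$, the set $N(B,0)$ has $O(n^{40d})$ sites, so by the inductive transition probability bound (hypothesis (2)) applied at the integer times, together with Claim~\ref{claim:fast} (or Lemma~\ref{cor:far}) to control the walk between integer times, we get $\mathbb P(\xi_B\le t_n\wedge\tau_{\mathrm{reg}})\le C n^{40d}\,\mathrm{dist}(0,B)^{-d/2+0.1}$, summed over the relevant scales. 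This already gives a polynomial-in-$1/\mathrm{dist}$ decay, which is summable over blocks far from the origin, but is \emph{not} by itself enough near the origin; the crucial gain must come from the second factor.

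The heart of the argument is to show that, \emph{conditionally on $\mathcal F_{\xi_B}$ and on the walk being at a typical state}, the escape probability $\mathbb P(\|W(\xi_B+k^34^k)-W(\xi_B)\|\ge k2^k\mid\mathcal F_{\xi_B})$ is in fact bounded below by a constant (or at least by $n^{-c}$ with $c\ll C_1$), so the conjunction with ``escape probability $\le n^{-C_1}$'' forces $\mathcal F_{\xi_B}$ to lie in a rare set. To see the lower bound: by Lemma~\ref{l:t.hit.couple} we can couple the future of $W$ after time $\xi_B$ with an independent cyclic (in fact regenerated) walk $W'$ up to $\tau_{\mathrm{hit}}(\xi_B)$. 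An independent cyclic walk, run for time $k^34^k$, escapes distance $k2^k$ with probability bounded below by a constant — this is a lower-tail estimate for a near-Brownian walk and follows from the inductive Brownian approximation (hypothesis (4)) at scale $\le k\le n/2$, or more simply from a direct random-walk ballistic-deviation bound. So the only way the escape probability can be as small as $n^{-C_1}$ is if, with probability close to $1$ under the coupling, the walk interacts with its history before time $k^34^k$ — i.e. $\tau_{\mathrm{hit}}(\xi_B)-\xi_B$ is small with conditional probability $\ge 1-n^{-c}$. The next step is therefore: express the event in the proposition as contained in the event $\{\xi_B\le t_n\wedge\tau_{\mathrm{reg}}\}\cap\{\mathbb P(\tau_{\mathrm{hit}}(\xi_B)<\xi_B+k^34^k\mid\mathcal F_{\xi_B})\ge 1-n^{-c}\}$.

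To finish, I would bound the probability of that last event. Writing $p(\xi_B):=\mathbb P(\tau_{\mathrm{hit}}(\xi_B)<\xi_B+k^34^k\mid\mathcal F_{\xi_B})$, we have by the tower property
\begin{equation}
\mathbb P\big(\xi_B\le t_n\wedge\tau_{\mathrm{reg}},\ p(\xi_B)\ge 1-n^{-c}\big)\le (1-n^{-c})^{-1}\,\mathbb E\big[\mathbf 1_{\xi_B\le t_n\wedge\tau_{\mathrm{reg}}}\,\mathbf 1_{\tau_{\mathrm{hit}}(\xi_B)<\xi_B+k^34^k}\big],
\end{equation}
so it suffices to show the walk reaches $B$ \emph{and} quickly hits its own history with total probability $\le e^{-cn^4}$. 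This is where the relaxed-path machinery enters: on the high-probability event that the path up to $t_n$ is relaxed (hypothesis (1)) and that a positive fraction of times in any window are relaxed, Lemma~\ref{lem:relaxed4} says that from each relaxed time the walk avoids its history for an additional $4^n\gg k^34^k$ steps with probability $1-C\beta^{-c}$. Thus to have $\tau_{\mathrm{hit}}(\xi_B)$ come within $k^34^k$ of $\xi_B$, either $\xi_B$ is not close to a relaxed time (a block-heaviness / local-time statement controlled by the super-heavy definitions and Section~\ref{sec:heavy}-type estimates, but here we can use just the relaxed-path hypothesis to say this is rare), or a relaxed time near $\xi_B$ nevertheless failed its avoidance — probability $C\beta^{-c}$ per relaxed time — and there are at most $t_n$ of these, giving $Ct_n\beta^{-c}$, which for large $\beta$ is summable and can be absorbed. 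Combining with the reaching-$B$ estimate from the first step and summing the product over the at most $C n^{2d}t_n^{d/2}$ candidate blocks within range $n^2\sqrt{t_n}$ of the origin yields the bound $e^{-cn^4}$; the saving beyond the crude polynomial comes precisely from the factor $(1-n^{-c})^{-1}$ being harmless together with the $\beta^{-c}$ (hence super-polynomially small in $n$, since $\beta$ is a large fixed constant and $n$ ranges over $n\ge n_0$) gain from Lemma~\ref{lem:relaxed4} and the exponential relaxed-path bound $p_n$.

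The main obstacle I expect is the middle step: rigorously converting ``the conditional escape probability is small'' into ``the conditional hitting-history probability is large,'' since Lemma~\ref{l:t.hit.couple} only couples $W$ with the independent walk $W'$ up to $\tau_{\mathrm{hit}}$, and one must be careful that the independent walk's own ballistic escape is not itself spoiled — this needs the inductive Brownian approximation or a clean ballistic lower bound at the intermediate scales $j\le k\le n/2$, uniformly in the (arbitrary) cyclic time offset, and uniformly over the conditioning $\mathcal F_{\xi_B}$, which is exactly the delicate point because $\mathcal F_{\xi_B}$ can encode a very entangled history near $B$.
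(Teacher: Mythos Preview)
Your proposal has a genuine gap at the quantitative level that cannot be repaired along the lines you sketch.

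First, a minor point: Proposition~\ref{prop:fix B} is stated for a \emph{fixed} block $B$; the union bound over blocks is how Theorem~\ref{thm:escape} follows from it, not part of its proof. So the first step (bounding the probability of reaching $B$) and the final summation are not needed here.

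The real problem is the source of the $e^{-cn^4}$ decay. Your argument ultimately relies on Lemma~\ref{lem:relaxed4}, whose conclusion is that from a relaxed time the walk avoids its history with probability $1-C\beta^{-c}$. But $\beta$ is a large \emph{fixed} constant; $\beta^{-c}$ is a constant in $n$, not ``super-polynomially small in $n$'' as you write. No amount of iterating a $1-C\beta^{-c}$ bound over $t_n$ many times gives $e^{-cn^4}$. Moreover, $\xi_B$ is precisely the kind of time that need not be relaxed: it is the first entry into a new small block, and the entire purpose of the escape algorithm is to handle times at which the walk may be arbitrarily entangled with its history. The relaxed-path hypothesis only says that a $9/10$ fraction of all times are relaxed; it gives no control over whether the specific stopping time $\xi_B$ sits near one.

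The paper's mechanism for producing $e^{-cn^4}$ is completely different and does not go through relaxed times at all. Because $B$ is unexplored at $\xi_B$, one can plant $n^4$ \emph{independent} probe walks starting from points $p_1,\ldots,p_{n^4}$ in the far half $B^+$ of the block (independent of the history $\cF_{\xi_B}$). Lemma~\ref{lem:Ai} shows that each probe, conditionally on the earlier ones, succeeds in escaping to distance $k2^k$ with probability bounded below by a constant $c$; hence at least one succeeds with probability $1-e^{-cn^4}$ (Corollary~\ref{cor:A}). This is where the bound comes from: $n^4$ independent Bernoulli trials. The connection lemma (Lemma~\ref{lem:connect}) then shows that once inside $B$ the walk can connect to whichever probe succeeded with probability $\ge 2n^{-C_1}$, using only the unexplored half $B^-$; this accounts for the $n^{-C_1}$ threshold. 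A Markov argument transfers the unconditional bound $\P(\cA^c)\le e^{-cn^4}$ to the event $\{\P(\cA^c\mid\cF_{\xi_B})\ge 1/2\}$. Your ``main obstacle'' paragraph correctly identifies that the coupling with an independent walk cannot be made to work uniformly over $\cF_{\xi_B}$; the paper's resolution is to avoid that coupling entirely and instead exploit the unexplored interior of $B$.
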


\begin{figure}[htp]
    \centering
    \includegraphics[width=17cm]{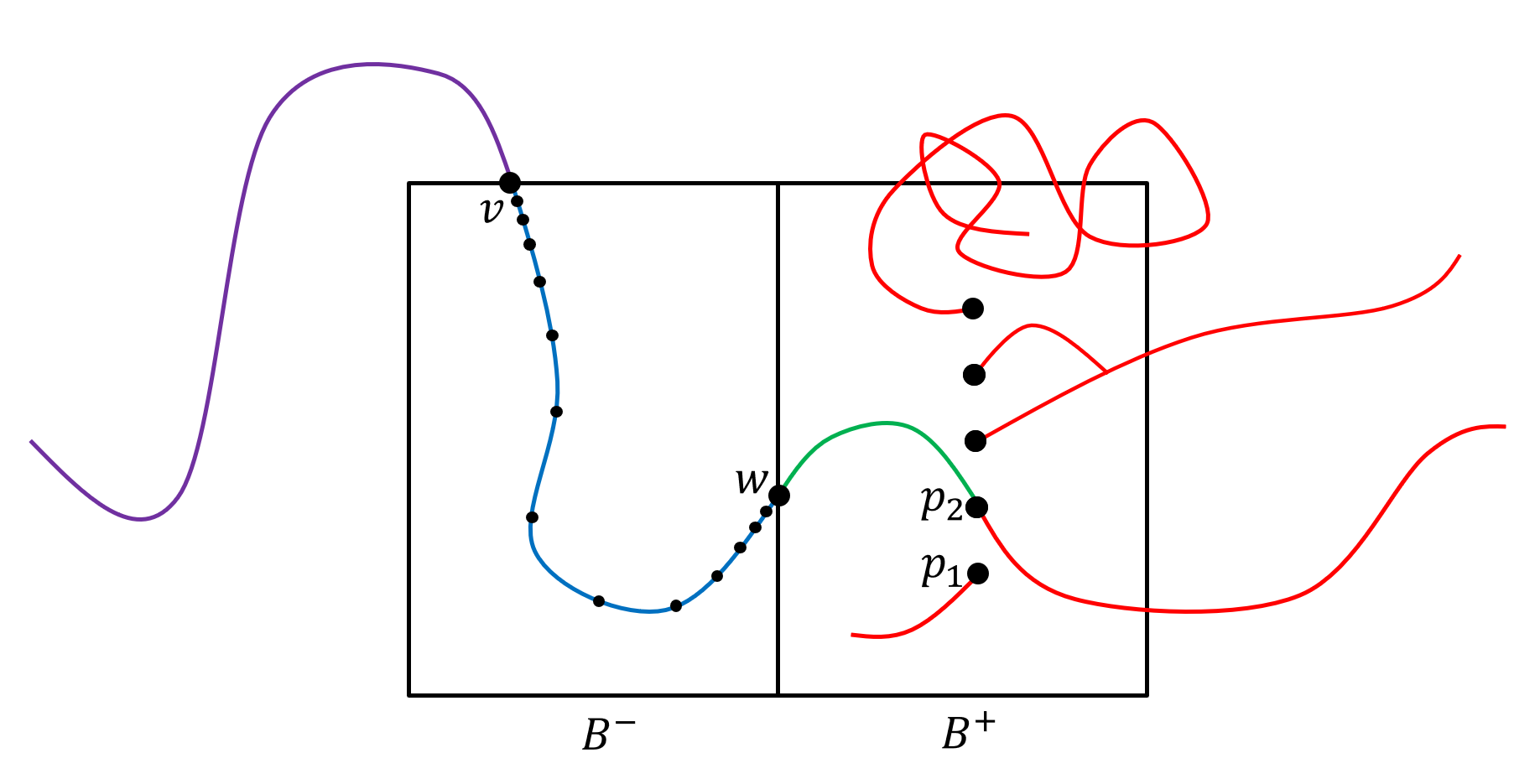}
    \caption{The escape algorithm. Corollary~\ref{cor:A} shows that with very high probability at least one of the red paths will escape far. Most likely, this is true after conditioning on the history before entering $B$ (the purple path). Once we enter $B$, we reveal the red paths and find the successful starting point ($p_2$). Then, we use Lemma~\ref{lem:connect} to connect $v$ to this starting point at the right time and escape. For the proof of Lemma~\ref{lem:connect}, we first use Claim~\ref{claim:green} that shows that with positive probability, the green path will avoid the red paths and connect to $B^-$ in a short amount of time. Then, we use Claim~\ref{claim:chain} to connect $v$ to $w$ while staying in $B^-$. This is done by splitting the blue path into $O(\log n)$ many paths of different length scales and use the Brownian approximation in each scale.
    }
    \label{fig:escape}
\end{figure}

Theorem~\ref{thm:escape} follows from Proposition~\ref{prop:fix B} and a union bound over all blocks $B\in \mathcal B_j$ at distance at most $5^n$ from the origin.

We next define the event $\mathcal A =\mathcal A(B,\beta ')$ for an integer $\beta '\le \beta $. Intuitively, $\mathcal A $ says that the block $B$ is escapable. That is, if the walk enters $B$ for the first time then it has a chance of at least $n^{-\gamma}$ to escape fast.

We let $B^-:=u+[0,2^{j-1})\times[0,2^{j})^{d-1}$ and $B^+:=u+[2^{j-1},2^j)\times[0,2^{j})^{d-1}$ be the left and right halves of the box $B$ respectively. Let $p_1,\dots ,p_{n^3}$ be equally spaced points in the interior of $B^+$. To make things concrete, let 
\begin{equation}
    p_i:=u+ 2^{j-2}(3,2,\dots ,2) + i\lfloor n^{17}/3 \rfloor  e_2  \quad i \le n^3.
\end{equation}
See Figure~\ref{fig:escape}. For each one of these points we consider the cyclic random walk $W_i(t)$ starting from $p_i$ at the cyclic time $\beta '$.  We define inductively a sequence of stopping times $\tau _i$ and expose the walk $W_i(t)$ up to time $\tau _i$. Suppose that $\tau _1,\dots , \tau _{i-1}$ were defined. We define $\tau _i$ to be the first time $t>0$ for which one of the following occurs:
\begin{enumerate}
    \item 
    The walk reaches $0$.
    \item
    We have $t\ge \log \beta $ and $\|W_i(t)-p_i\|\le 2\sqrt{t}/\log t$.
    \item
    We have $t\ge \log \beta$ and $\tau ^{(i)}_{\rm fast}(\log ^2 t)\le t$, where $\tau _{\rm fast}^{(i)}$ is defined as in \eqref{eq:tau fast} with $W_i$.
    \item 
    We have $\| W_i(t)-p_\ell \| \le n^{17}/6$ for $\ell \neq i$.
    \item
    For some $\ell <i$ and $ s\le \tau _\ell $ we have $\| W_i(t)-W_\ell(s)\| \le 1$.
    \item 
    We have that $d(W(t),B^-)\le 2^{j-3}$.
    \item 
    time reaches $4^n$.
\end{enumerate}
We define $ \mathcal A _i=\mathcal A_i (\beta '):=\{\tau _i=4^n\}$ and let $\mathcal A :=\bigcup _{i\le n^3} \mathcal A _{i}$.

\begin{lem}\label{lem:Ai}
For all $i\le n^3$ we have that 
\begin{equation}
    \mathbb P \big( \mathcal A_{i}  \ \big| \ W_\ell (s), \   s\le \tau _\ell , \ \ell <i \big) \ge c. 
\end{equation}
\end{lem}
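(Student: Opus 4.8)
The plan is to produce, on an event of probability bounded below by a constant, an explicit ``clean escape'' of $W_i$ that reaches the time $k^{3}4^{k}$ without any of the first seven stopping conditions firing and that ends at distance at least $k2^{k+1}$ from the origin for every $s\in[k^{3}4^{k-1},k^{3}4^{k}]$.

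\emph{Step 1 (reduction to a fresh cyclic walk and to Brownian motion).} Before time $\tau_i$ the walk $W_i$ has never come within distance $1$ of the already exposed traces $S:=\bigcup_{\ell<i}\{W_\ell(s):s\le\tau_\ell\}$, this being precisely stopping condition $(6)$; hence, exactly as in Lemma~\ref{l:t.hit.couple}, conditionally on the $\sigma$-algebra generated by the earlier walks we may realise $W_i$ (before $\tau_i$) as a fresh cyclic walk started from $p_i$ at cyclic time $\beta'$, independent of $S$. Two properties of $S$ will be used. First, $S$ is sparse: each $W_\ell$ is run for time at most $k^{3}4^{k}$, so by the inductive transition‑probability and travelling‑far bounds, together with conditions $(4)$--$(5)$ for $\tau_\ell$ (which forbid heavy blocks around the $p_m$), the range of each $W_\ell$ has density at most $6.1^{m}$ in every box of side $2^{m}$, whence $|S\cap(z+[-2^{m},2^{m}]^{d})|\le n^{4}\,6.1^{m}$ for all $z,m$. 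Second, $S$ stays away from $p_i$: since $p_i=p_m$ with $m=i\ne\ell$, condition $(3)$ halts each $W_\ell$ before it enters $B(p_i,n^{35}/2)$, so $S\cap B(p_i,n^{35}/2-1)=\varnothing$. Finally, because $k\le n/2$ we have $k^{3}4^{k}\le t_{n'}$ for $n':=k+\lceil 3\log_{4}k\rceil+1<n$ and, with $j$ as fixed in this section, $4^{j+7}\le t_{\tilde n}$ for $\tilde n:=j+\lceil 5\log_{4}n\rceil<n$; the induction hypothesis thus applies to $W_i$ at both scales, and we couple $W_i$ with Brownian motions $B_{\tilde n},B_{n'}$ as in the approximately‑Brownian hypothesis, with errors at scale $\tilde n$ much smaller than $n^{35}$ and than $2^{j-3}$, and at scale $n'$ at most $O(k^{6/5}2^{4k/5})$, the couplings failing with probability at most $p_{\tilde n}+p_{n'}$.

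\emph{Step 2 (the near phase: clean escape of the $O(2^{j})$‑neighbourhood of $p_i$).} The point is that $W_i$ can leave $B(p_i,2^{j+7})$ in the $+e_1$ direction while avoiding everything dangerous near $p_i$, since $N(B^{-},2^{j-3})$ lies in the half‑space $\{x^{(1)}\le p_i^{(1)}-3\cdot 2^{j-3}\}$, while the $p_\ell$ and the relevant part of $S$ lie at distance $\gtrsim n^{35}$ from the escape ray. Running $B_{\tilde n}$ until it exits $B(p_i,2^{j+7})$, consider the event that it first moves up in $+e_1$ by $2n^{34}$ while staying inside $B(p_i,n^{35}/2)$ (where $N(B^-,2^{j-3})$, the $p_\ell$ and $S$ are all absent), and then exits $B(p_i,2^{j+7})$ keeping its $e_1$‑coordinate above $p_i^{(1)}-2^{j-2}$; this has probability bounded below by a constant, the second part being a Girsanov estimate whose cost — an order‑$2^{-j}$ upward drift holding a barrier of size $2^{j-2}$ over time $\asymp 4^{j+7}$ — is $O(1)$. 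On this event the scale‑$\tilde n$ coupling shows $W_i$ keeps distance $>n^{35}/2$ from every $p_\ell$ ($\ell\ne i$) and $>2^{j-3}$ from $B^{-}$, so $(3),(7)$ do not fire; once $W_i$ has left $B(p_i,n^{34})$ the set $S$ lies at distance $\ge n^{34}$ and is sparse, so a Claim~\ref{claim:A}‑type estimate makes $W_i$ avoid $S$ with probability $1-o(1)$, handling $(6)$; the excursion is a single crossing of each box near a $p_\ell$ or near $p_i$, in which a walk visits only $O(4^{m})<5^{m}$ points of a box of side $2^{m}$, so no such box becomes heavy and $(4)$--$(5)$ do not fire; and $W_i$ never approaches the origin, since its coordinates $m\ge 3$ stay within $2n^{34}$ of $p_i^{(m)}$, whose absolute value is at least $2^{j-1}$ — so $(1)$ does not fire. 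Writing $T_{\rm esc}\le t_{\tilde n}$ for the exit time of $B(p_i,2^{j+7})$, Claim~\ref{claim:fast} gives $\|W_i(T_{\rm esc})-p_i\|\ge 2^{j+7}-O(1)$.

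\emph{Step 3 (the far phase, and conclusion).} By the choice of the escape ray and $\|p_i\|\ge 2^{j-1}$, at time $T_{\rm esc}$ the walk is at distance $\ge 2^{j+6}$ from both $p_i$ and the origin. Now ask that $W_i$ reach distance $\ge 16k2^{k}$ from both the origin and $p_i$ by time $k^{3}4^{k}/8$ and remain at distance $\ge k2^{k+1}$ from the origin and $\ge 2^{j+7}$ from $p_i$ for all $s\in[k^{3}4^{k}/8,k^{3}4^{k}]$. Using the scale‑$n'$ coupling, transience of Brownian motion (Lemma~\ref{l:BM.ball.exit}), and Claim~\ref{claim:W big} at the intermediate scales $\tilde n\le\ell\le n'$ — which keeps $\|W_i(s)-p_i\|$ far above $2^{j+7}$ throughout $[T_{\rm esc},k^{3}4^{k}]$, hence keeps $(3),(4),(7)$ dormant, keeps $W_i$ away from the sparse $S$ via Claim~\ref{claim:A}, and, as $\|W_i(s)\|\ge 2^{j-1}$, rules out $(1)$ — each of these requests has probability bounded below by a constant, and so does their conjunction (condition $B_{n'}$ to follow a ray pointing away from the origin, which also points away from $p_i$ because $\|x\|\ge\|p_i\|$ on this portion); finally $(2)$ fails with probability $1-o(1)$ since $W_i$ does not return within $\log\beta$ of $p_i$ after the first $\beta/3$ units of time. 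On the intersection of all these events no stopping condition $(1)$--$(7)$ fires before $k^{3}4^{k}$, so $\tau_i=k^{3}4^{k}$, and $\|W_i(s)\|\ge k2^{k+1}$ for $s\in[k^{3}4^{k-1},k^{3}4^{k}]$; this is $\mathcal A_i$, and its conditional probability is at least a constant.

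\emph{Main obstacle.} The hard part is Step 2: one must track all seven stopping conditions at once while reconciling the many length scales --- $2^{j}\asymp n^{40}$, the $n^{35}$ spacing of the $p_\ell$, the $2^{j-3}$ buffer to $B^{-}$, the $\log\beta$ threshold in $(2)$, and the two Brownian errors. This is what forces the auxiliary scale near $p_i$ to be $\tilde n=j+O(\log n)$ (small enough that its Brownian error lies below every buffer, large enough to give time to leave $B(p_i,2^{j+7})$) and forces the escape direction and the bound $\|p_i\|\ge 2^{j-1}$ to be read off the explicit formula for $p_i$, with a short case analysis on the position of the block $B$ relative to the origin. A secondary --- and routine --- point is establishing the sparsity and the $B(p_i,n^{35}/2)$‑avoidance of $S$ in Step 1 from the inductive control of the earlier walks and the defining rules of the $\tau_\ell$.
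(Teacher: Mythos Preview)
Your overall plan is the same as the paper's: couple $W_i$ with an independent regenerated walk $W_i'$, use the Brownian approximation for the near phase to keep $W_i'$ away from $B^-$ and the other $p_\ell$, and apply Claim~\ref{claim:A} to the sparse set of earlier traces to handle (1) and (6). The paper's organisation is considerably simpler, however. In place of your two-scale Girsanov construction, it combines one Brownian event $\mathcal{H}$ at the single scale $j^3 4^j$ with the multi-scale transience event
\[
\mathcal{B}=\bigcap_{m=\lfloor\log\log\beta\rfloor}^{k}\big\{\forall\, s\in[m^3 4^m,4^n],\ \|W_i'(s)-p_i\|\ge m2^{m+2}\big\},
\]
which has probability $1-o(1)$ by Claim~\ref{claim:W big}. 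This single event then simultaneously handles condition~(2), the distance requirement in $\mathcal{A}_i$, and --- by capping the time spent in each $p_i+[-2^m,2^m)^d$ at $m^C 4^m$ --- conditions (4) and (5) at every scale $m$.

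The last point is where your argument has a genuine gap. Your ``single crossing visits $O(4^m)$ points'' claim is asserted but not established by your construction: in the second sub-phase of Step~2 the $e_1$-barrier sits at $p_i^{(1)}-2^{j-2}$, far below $p_i^{(1)}$, and the other coordinates are unconstrained, so nothing prevents $W_i$ from returning to boxes $p_i+[-2^m,2^m)^d$ with $\log\beta\le 2^m\ll 2^j$ and accumulating $\gg 5^m$ visits there, tripping~(5). What is missing is exactly a scale-by-scale occupation control of type $\mathcal{B}$; once you intersect with such an event (probability $1-o(1)$, so your constant lower bound survives), the proof goes through. There is also an arithmetic slip in your handling of~(7): the top of $N(B^-,2^{j-3})$ has $e_1$-coordinate $u^{(1)}+2^{j-1}-1+2^{j-3}=p_i^{(1)}-2^{j-3}-1$, not $\le p_i^{(1)}-3\cdot 2^{j-3}$ as you claim, so your barrier at $p_i^{(1)}-2^{j-2}=p_i^{(1)}-2\cdot 2^{j-3}$ is too low to guarantee $d(W_i,B^-)>2^{j-3}$; the barrier needs to be raised to at least $p_i^{(1)}-2^{j-3}$.
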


\begin{proof}
Let $W_i'$ be a regenerated cyclic walk starting from $p_i$ and independent of $(W_\ell (s), \ s\le \tau _\ell , \ell <i)$. We can couple $W_i$ and $W_i'$ such that $W_i(s)=W'_i(s)$ for all $s\le \tau _i$. Define the event
\begin{equation}
    \mathcal B := \bigcap _{m=m_1  }^n \big\{ \forall 4^m\le s \le 4^n, \ \|W_i'(s)-p_i\|\ge 2^{m+3}/m \big\},
\end{equation}
where $m_1:=\lfloor (\log \log \beta ) /2\rfloor $. Clearly, on $\mathcal B$ the stopping time $\tau _i$ will not stop because of condition (1). Moreover, on this event the walk $W_i'$ will not regenerate. By Claim~\ref{claim:W big} we have that 
\begin{equation}
    \mathbb P (\mathcal B ^c) \le C\sum _{m=m_1}^{\infty } m^{-3} \le C /(\log \log \beta ), 
\end{equation}
where in here we also used that $d\ge 5$. 

Next, define the set 
\begin{equation}
    A:=\{0\}\cup \big\{u\in \mathbb Z ^d :  \exists \ell <i,\  s\le \tau _\ell ,\ \|u-W_\ell (s)\| \le 1   \big\}.
\end{equation}
and the event $\mathcal C := \{\forall s\le 4^n : W_i'(s)\notin A\}$. On $\mathcal C$ we will not stop because of conditions (1) and (5). Note that by condition (4), any $u\in A$ satisfies $\|u-p_i\| \ge n^{17}/7$. Moreover, by conditions (2) and (3), for each $\ell <i$, the number of vertices visited by the walk $W_\ell$ in the block $p_\ell +[-r,r]^d$ is at most $r^2 \log ^Cr$ for all $r\ge \log \beta $. Thus, for all $n^{17}/7 \le r\le 2^j$ we have that
\begin{equation}
    \big| A\cap \big( p_i+[-r,r]^d \big)  \big| \le n^3 j^C4^j \le r^{8/3}
\end{equation}
and for all $r\ge 2^j$ we have 
\begin{equation}
    \big| A\cap \big( p_i+[-r,r]^d \big)  \big| \le n^3 r^2 \log ^C r \le r^{8/3}.
\end{equation}
We can now use Claim~\ref{claim:A} to obtain that $\mathbb P \big( \mathcal C  \ \big| \ W_\ell (s), \   s\le \tau _\ell , \ \ell <i \big)\ge 1-n^{-c}$.

Next, define the event 
\begin{equation}
    \mathcal D := \bigcap _{m=m_1}^n \big\{ \tau _{\text{fast}}^{(i)}(m^2) \ge 4^m  \big\}.
\end{equation}
By Claim~\ref{claim:fast} we have that $\mathbb P (\mathcal D) \ge 1-1/\log \beta $. On $\mathcal D$ we will not stop because of condition (3).

Let $B(s)$ be the Brownian motion from the inductive assumption at time $j4^j$ for $W_i'$. By the inductive assumption we have that $\mathbb P (\mathcal H)\ge 1-Ce^{-cj^2}$ where we let $\mathcal H$ be the event that the coupling of $W_i'$ and $B_j$ was successful. Finally, consider the event 
\begin{equation}
    \mathcal I :=\Big\{ \forall s\le j4^j,\   d(B_j(s),B^-) \ge 0.2\cdot 2^j  \text{ and } \forall \ell \neq i, \ \|B_j(s)-p_\ell \| \ge n^{17}/5
     \Big\}.
\end{equation}
We clearly have that $\mathbb P (\mathcal H)\ge c$. Moreover, on $\mathcal H \cap \mathcal I \cap \mathcal B$ we will not stop because of conditions (4) and (6).

We obtain that the event $\mathcal J =\mathcal B \cap \mathcal C \cap \mathcal D \cap \mathcal I \cap \mathcal H $ satisfies $\mathbb P \big( \mathcal J  \ \big| \ W_\ell (s), \   s\le \tau _\ell , \ \ell <i \big)\ge  c$ and that on $\mathcal J$ we have $\tau _i=4^n$. This finishes the proof of the lemma.
\end{proof}

The following corollary is immediate. 
 
\begin{cor}\label{cor:A}
We have that $\mathbb P \big( \mathcal A (\beta ') \big) \ge 1-e^{-cn^3}$.
\end{cor}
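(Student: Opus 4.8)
The plan is to deduce Corollary~\ref{cor:A} from Lemma~\ref{lem:Ai} by a simple independence argument exploiting the fact that the events $\mathcal A_i$ are each lower bounded in conditional probability regardless of the outcome of the earlier walks. Recall that $\mathcal A = \bigcup_{i\le n^4} \mathcal A_i$, so $\mathcal A^c = \bigcap_{i\le n^4} \mathcal A_i^c$, and we want to show $\mathbb P(\mathcal A^c)\le e^{-cn^4}$.

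First I would write, by the tower property of conditional expectation and iterating over $i = n^4, n^4-1, \dots, 1$,
\begin{equation}
\mathbb P(\mathcal A^c) = \mathbb P\Big( \bigcap_{i\le n^4} \mathcal A_i^c \Big) = \mathbb E\Big[ \prod_{i\le n^4} \mathds 1_{\mathcal A_i^c} \Big].
\end{equation}
Conditioning on the sigma-algebra generated by $\big(W_\ell(s): s\le \tau_\ell, \ \ell<i\big)$ and peeling off the $i = n^4$ factor, Lemma~\ref{lem:Ai} gives $\mathbb P\big(\mathcal A_{n^4}^c \mid W_\ell(s), s\le \tau_\ell, \ell<n^4\big) \le 1-c$ pointwise. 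Since $\mathds 1_{\mathcal A_\ell^c}$ for $\ell<n^4$ is measurable with respect to this sigma-algebra, we may pull it out of the inner conditional expectation, obtaining
\begin{equation}
\mathbb E\Big[ \prod_{i\le n^4} \mathds 1_{\mathcal A_i^c} \Big] = \mathbb E\Big[ \prod_{i< n^4} \mathds 1_{\mathcal A_i^c} \cdot \mathbb P\big(\mathcal A_{n^4}^c \mid W_\ell(s), s\le\tau_\ell, \ell<n^4\big) \Big] \le (1-c)\, \mathbb E\Big[ \prod_{i< n^4} \mathds 1_{\mathcal A_i^c} \Big].
\end{equation}
Iterating this bound $n^4$ times yields $\mathbb P(\mathcal A^c) \le (1-c)^{n^4} \le e^{-cn^4}$ (with the constant $c$ possibly decreasing), which is exactly the claimed estimate.

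There is essentially no obstacle here: the only point that needs a moment's care is the measurability bookkeeping, namely that the stopping times $\tau_1,\dots,\tau_{i-1}$ and the exposed paths $W_1,\dots,W_{i-1}$ up to those times are indeed measurable with respect to the conditioning sigma-algebra used in Lemma~\ref{lem:Ai}, so that the conditional lower bound can legitimately be applied inside the nested expectation. This is immediate from the inductive construction of the $\tau_i$ preceding the lemma. One should also note that the constant $c$ in the final bound differs from the $c$ in Lemma~\ref{lem:Ai}, but this is consistent with the constant policy stated earlier in the paper, so no further comment is needed. Thus the corollary follows immediately, as stated.
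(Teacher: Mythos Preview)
Your proof is correct and is exactly the argument the paper has in mind: the paper simply states that the corollary is immediate from Lemma~\ref{lem:Ai}, and your iteration of the conditional bound $(1-c)$ over the $n^4$ indices is precisely the intended reasoning.
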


Next, let $\mathcal G  (\beta ')$ be the sigma algebra generated by $(W _i(s), \ s\le \tau _i, \ i\le n^3)$. We also let $t'$ be the last cyclic integer before $\xi _B +4^j$ and  define the event
\begin{equation}
    \mathcal K (\beta ') :=\big\{ \xi _B \le t_n \wedge \tau _{\rm  reg}, \ W(\xi _B) \in B^- \text{ and } \ t' \equiv \beta ' (\!\!\!\!\!\!\mod \beta ) \big\}
\end{equation}
for any integer $\beta ' \le \beta $. Corollary~\ref{cor:A} shows that with high probability there is $i\le n^3$ such that the cyclic walk starting from $p_i$ escapes fast. The next lemma shows that with non negligible probability we can connect to $p_i$ at the right time and escape. 

\begin{lem}\label{lem:connect}
For all integers $\beta '\le \beta $ and  $i\le n^3$, on the event $\mathcal K (\beta ')$  we have
\begin{equation}
       \mathbb P \big( W(t')=p_i \ | \ \mathcal F _{\xi _B}, \ \mathcal G (\beta ') \big) \ge 2n^{-\gamma }. 
\end{equation}
\end{lem}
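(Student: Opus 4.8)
The plan is to construct, on the event $\mathcal C(\beta')$ and conditionally on $\mathcal F_{\xi_B}$ and $\mathcal G_B(\beta')$, an explicit realization of $W$ on $[\xi_B,t']$ that begins at $v:=W(\xi_B)\in B^-$ and satisfies $W(t')=p_i$, whose probability is at least $n^{-C}$ for some constant $C$; since the constant $C_1$ in the definition of $\tau_{\mathrm{esc}}$ and $\mathcal A$ is still free, one then takes $C_1>C$. By Lemma~\ref{l:t.hit.couple} we couple $W$ after time $\xi_B$ with a fresh regenerated cyclic walk $\tilde W$ started at $v$ and independent of $(\mathcal F_{\xi_B},\mathcal G_B(\beta'))$, so that $W(\xi_B+s)=v+\tilde W(s)$ up to the first time the path would come within distance $1$, at a cyclically compatible time, of the spatial trace $H$ of the history before $\xi_B$. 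Because $\xi_B$ is the first time $B$ is entered, $H$ meets $B$ only in a short incoming stub near $v$, so it suffices to steer $\tilde W$ through a corridor lying in $B$ that reaches $p_i-v$ at time $t'-\xi_B$ — a duration that is $\Theta(4^j)$, at least $4^j/2$, and fixed once $\xi_B$ is revealed — while avoiding that stub, and (with a view toward the subsequent escape step) avoiding the revealed auxiliary paths $W_\ell$, $\ell\ne i$, which by conditions $(3),(4),(6),(7)$ in the definition of the $\tau_\ell$ stay at distance $>2^{j-3}$ from $B^-$ and are thin in every dyadic block around each $p_\ell$.

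The corridor has two phases. In the first, long phase $\tilde W$ is chained from $v$ to a fixed point $w\in B^-$ near the face separating $B^-$ from $B^+$ and at distance $\ge n^{35}$ from every $p_\ell$, staying inside $B^-$ and using most of the budget $t'-\xi_B$. I realize this by a dyadic scheme over $O(j)=O(\log n)$ scales: for $\ell$ running from $j-O(1)$ down to a finest constant scale $\ell_0$, over a sub-interval of length $\Theta(4^\ell)$ we demand that the displacement of $\tilde W$ lies in a prescribed ball of radius a small constant times $2^\ell$ and that $\tilde W$ stays inside a slightly enlarged sub-box of $B^-$; near the coarsest scale we allow $O(1)$ repetitions so as to cross $B^-$. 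By the approximately-Brownian clause of the inductive hypothesis, $\tilde W$ is within $(4^\ell)^{2/5}=o(2^\ell)$ of a Brownian motion over such an interval, so each demand follows from the corresponding constant-probability event for Brownian motion, and multiplying over the $O(\log n)$ scales gives probability at least $c^{O(\log n)}=n^{-C}$. It is essential that we ask only for precision $\Theta(2^\ell)$, safely above the coupling error $(4^\ell)^{2/5}$; demanding precision $(4^\ell)^{2/5}$ would cost $2^{-\Omega(\ell)}$ per scale and hence a super-polynomial total. After the scheme $\tilde W$ lies within $O(1)$ of $w$, and — since over the finest scale $\tilde W$ equals its driving simple walk — an $O(1)$-step correction reaches $w$ exactly with constant probability.

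In the second, short phase $\tilde W$ crosses from $w\in B^-$ into $B^+$ and reaches $p_i$, in time $\mathrm{poly}(n)$. The route must avoid the incoming stub and every $W_\ell$ with $\ell\ne i$: the slab of $B^+$ of width $2^{j-3}$ adjacent to $B^-$ is entirely free of the $W_\ell$, and combined with the thinness (condition $(4)$) of each $W_\ell$ in every dyadic block around every $p_\ell$, one can route toward $p_i$ through a region in which $\bigcup_{\ell\ne i}W_\ell$ is sparse in the sense of Claim~\ref{claim:A}; a variant of Claim~\ref{claim:A}, a local central limit estimate for the simple walk, and Claim~\ref{claim:fast} (to exclude atypically fast excursions leaving that region) together show $\tilde W$ traverses it and hits $p_i$ exactly with probability at least $n^{-C}$. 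The total corridor duration is a continuous parameter which we calibrate to the fixed value $t'-\xi_B$, the sub-constant residual being absorbed through the Poisson timing of the final jump into $p_i$ (and the absence of a clock ring at $p_i$ in the tiny window just before $t'$); hence $W(t')=p_i$. Collecting the $O(\log n)$ constant factors from the first phase and the $\mathrm{poly}(n)$ factors from the exact hits, $\mathbb P\big(W(t')=p_i\mid\mathcal F_{\xi_B},\mathcal G_B(\beta')\big)\ge n^{-C}$, and choosing $C_1>C$ gives the bound $2n^{-C_1}$.

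The step I expect to be the main obstacle is the second phase: routing the forced trajectory through $B^+$ all the way to the exact point $p_i$ while simultaneously avoiding the external history and all $n^4$ revealed auxiliary walks, and keeping the probability only polynomially small in $n$. This is where the geometric confinement imposed by conditions $(3),(4),(7)$ on the $\tau_\ell$ — the macroscopic gap from $B^-$ and the dyadic-block thinness of each $W_\ell$ around every $p_\ell$ — must be used to carve out a sufficiently sparse channel, and where one must check that this channel, the local-CLT exact-hit estimate, and the timing calibration are mutually compatible. The subsidiary points — that the scale-$\ell$ Brownian coupling errors $(4^\ell)^{2/5}$ are negligible against the scale-$2^\ell$ geometry at every level, that $\tilde W$ never self-interacts along the corridor, and the accounting of the $O(\log n)$ constant factors — are routine but need to be tracked with care.
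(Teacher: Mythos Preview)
Your Phase~1 is essentially the paper's Claim~\ref{claim:chain}: a dyadic chaining inside $B^-$ from $v=W(\xi_B)$ to a point $w$ on the face between $B^-$ and $B^+$, using the Brownian coupling at each scale and paying $c^{O(\log n)}=n^{-C}$ overall. That part is fine.

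The gap is in Phase~2. You propose to push $\tilde W$ forward from $w$ through $B^+$ all the way to $p_i$, avoiding the revealed paths $W_\ell$. But the forward path $W_i$ itself is part of $\mathcal G_B(\beta')$ and it emanates from $p_i$. Condition~(5) only controls its density in blocks $p_i+[-2^m,2^m)^d$ with $2^m\ge \log\beta$; at scales below $\log\beta$ there is no sparsity bound whatsoever on $W_i$ near its own starting point, so Claim~\ref{claim:A} (or any variant driven by dyadic sparsity) cannot carry you the last $\log\beta$ units into $p_i$. Moreover your coupling with a fresh $\tilde W$ breaks precisely when you meet any revealed $W_\ell$ at a compatible cyclic time, and there is no reason the conditional environment near $p_i$ should let a forward walk hit $p_i$ exactly at cyclic time $\beta'$ with probability $n^{-C}$. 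The ``local CLT $+$ avoidance'' combination you sketch does not address this small-scale obstruction.

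The paper resolves this with a different idea: instead of pushing forward to $p_i$, it runs $W_i$ \emph{backward} in time from $(p_i,\beta')$ and shows (Claim~\ref{claim:green}) that conditionally on $\mathcal G_B(\beta')$, with probability $\ge c$, this backward walk exits into $B^-$ at some point $w$ within time $4^{j-1}$. This backward segment is fresh randomness not contained in $\mathcal G_B(\beta')$, and it furnishes a ready-made tunnel: once the main walk $W$ reaches $w$ at the correct cyclic time (which is exactly the Phase~1 chaining, now inside $B^-$ where the environment is untouched), it is \emph{forced} by the Poisson clocks to follow this tunnel to $p_i$ at time $t'$. No navigation through $B^+$ is needed at all, and the problematic small-scale density of $W_i$ near $p_i$ becomes irrelevant. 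This is the idea you are missing.
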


Using Lemma~\ref{lem:connect} and Corollary~\ref{cor:A} we can easily prove Proposition~\ref{prop:fix B}.

\begin{proof}[Proof of Proposition~\ref{prop:fix B}]
Note that for any integer $\beta '\le \beta $ and all $i\le n^3$ we have that 
\begin{equation}
    \mathcal K(\beta ') \cap  \big\{ W(t')=p_i \big\} \cap \mathcal A_{i}(\beta ') \subseteq \mathcal E (\xi _B).
\end{equation}
Thus, by Lemma~\ref{lem:connect} we have on the event $\mathcal K(\beta ')$ 
\begin{equation}\label{eq:5630}
\begin{split}
    \mathbb P \big( \mathcal E (\xi _B)  \ | \ \mathcal F_{\xi _B}  \big) &\ge \sum _{i=1}^{n^3} \mathbb P \big( \mathcal A_{i}(\beta '), \ W(t')=p_i  \ | \ \mathcal F _{\xi _B}\big)\\
    &= \sum _{i=1}^{n^3} \mathbb E \Big[ \mathds 1 _{\mathcal A _{i}(\beta ')} \cdot \mathbb P \big(  W(t')=p_i  \ | \ \mathcal F _{\xi _B}, \ \mathcal G (\beta ')\big) \  \big| \ \mathcal F _{\xi _B} \Big]  \\
    &\ge  2n^{-\gamma } \sum _{i=1}^{n^3} \mathbb P \big( \mathcal A _{i}(\beta' ) \ | \ \mathcal F _{\xi _B} \big)\ge 2n^{-\gamma } \cdot \mathbb P \big( \mathcal A (\beta ')  \ | \ \mathcal F _{\xi _B} \big).
\end{split}
\end{equation}

Thus, letting $\mathcal Q := \big\{ \mathbb P \big( \mathcal A (\beta ')  \ |  \ \mathcal F _{\xi _B\wedge t_n \wedge \tau _{\rm reg} } \big) >  1/2  \big\}$ we obtain that on the event $\mathcal K (\beta ') \cap \mathcal Q$ we have $ \mathbb P \big( \mathcal E(\xi _B) \mid \mathcal F _{\xi _B} \big)  >n^{-\gamma }$. Hence, by Markov's inequality and Corollary~\ref{cor:A}  
\begin{equation}
\begin{split}
\mathbb P \Big( \mathcal K (\beta ') \text{ and } \mathbb P \big( \mathcal E(\xi _B)& \mid \mathcal F _{\xi _B} \big) \le n^{-\gamma } \Big) \le\mathbb P (\mathcal Q ^c ) \\
&=\mathbb P \big( \mathbb P \big( \mathcal A (\beta ') ^c  \ |  \ \mathcal F _{\xi _B\wedge t_n \wedge \tau _{\rm reg} } \big)\ge 1/2 \big) 
\le 2 \cdot \mathbb P \big( \mathcal A (\beta ')^c \big) \le e^{-cn^3}. 
\end{split}
\end{equation}
Summing over integers $\beta '\le \beta $ we have
\begin{equation}
    \mathbb P \Big( \xi _B \le t_n \wedge \tau _{\rm reg}, \ W(\xi _B) \in B^- \ \text{and} \ \mathbb P \big( \mathcal E (\xi _B) \mid \mathcal F _{\xi _B} \big) \le n^{-\gamma }  \Big) \le e^{-cn^3},
\end{equation}
Using exactly the same arguments when $B^-$ is replaced with $B^+$ finishes the proof of the proposition.
\end{proof}

We turn to prove Lemma~\ref{lem:connect}. To this end we need the following claims.

\begin{claim}\label{claim:chain}
Let $v,w\in \mathbb Z ^d$ such that
\begin{equation}
 v\in \partial B \cap B^-, \quad w\in u+\{2^{j-1}-1\}\times [2^{j-2}, 3\cdot 2^{j-2}]^{d-1}
\end{equation}
where $\partial B$ is the set of vertices in $B$ with a neighbour outside of $B$. Let $W_v(t)$ be the cyclic random walk starting at $v$. For all $4^{j-1} \le t \le 4^{j}$ we have that
\begin{equation}
    \mathbb P \big( W_v(t)=w \text{ and } W_v(s)\in B^-  \text{ for all }s\le t \big) \ge n^{-\gamma +1}. 
\end{equation}
\end{claim}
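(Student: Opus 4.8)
\textbf{Proof proposal for Claim~\ref{claim:chain}.}

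The plan is to build the path from $v$ to $w$ inside $B^-$ as a concatenation of $O(\log n)$ intermediate steps at geometrically increasing length scales, and to control each step by the inductive Brownian approximation together with the transition-probability bound. Fix a chain of points $v=z_0,z_1,\dots,z_r=w$ inside $B^-$, separated by distances roughly $2^{j-1}\cdot(\text{const})$ from the boundary of $B^-$, arranged so that consecutive points satisfy $\|z_{i+1}-z_i\|\asymp 2^{j-1}$; since $2^j\asymp n^{40}$ this chain has length $r=O(1)$ actually, but to keep all the blocks we encounter from becoming heavy we instead take $r\asymp \log n$ short moves. Assign to the $i$-th move a time increment $t_i$ with $\sum_i t_i = t$, each $t_i\asymp 4^{j}/r$, and let $m_i$ be the scale with $4^{m_i}\asymp t_i$, so $m_i = j - O(\log\log n)$. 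For the $i$-th move we run a fresh cyclic walk $W^{(i)}$ from $z_i$ for time $t_i$ and ask that it land at $z_{i+1}$ while staying inside $B^-$; by Lemma~\ref{l:t.hit.couple} (applied to stitch the pieces together, re-starting the driving walk at each $z_i$) the concatenation has the law of $W_v$ restricted to the good event, provided no piece interacts with the history of the earlier pieces, which we also demand.

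\textbf{Key steps, in order.} First, using the inductive Brownian approximation (hypothesis (4)) at scale $4^{m_i}$, a Brownian bridge-type estimate (of the flavour of Lemma~\ref{l:BM.ball.exit} and the other Brownian estimates from Section~\ref{sec:brownian}) shows that a Brownian motion started at $z_i$ has probability $\ge c\cdot 4^{-m_i d/2}\gtrsim t_i^{-d/2}$ of being within $O(t_i^{2/5})$ of $z_{i+1}$ at time $t_i$ while staying in $B^-$; transferring this to $W^{(i)}$ via the coupling (paying only $p_{m_i}\le e^{-cm_i^2}$) and then upgrading ``within $O(t_i^{2/5})$ of $z_{i+1}$'' to ``exactly $z_{i+1}$'' by a local CLT / transition-probability argument (inductive hypothesis (2), plus a short extra time increment handled by coupling with simple random walk as in the $t\le\beta$ case of Claim~\ref{claim:fast}), we get that each single move succeeds with probability $\ge c\,t_i^{-d/2}$. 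Second, we must guarantee the moves do not interact: each $W^{(i)}$ travels only $O(4^{m_i})$ time so by Lemma~\ref{cor:far} and Claim~\ref{claim:fast} it stays within $O(2^{m_i}\mathrm{polylog})$ of $z_i$, and since the $z_i$ are chosen $\gg 2^{m_i}$ apart (this is where $r\asymp\log n$ and the slack $2^j\gg n^{39}$ enter) the pieces are disjoint with probability $1-O(e^{-c\,\mathrm{polylog}\,n})$, so Lemma~\ref{l:t.hit.couple} applies on the intersection. Third, multiply: the total success probability is $\ge \prod_{i=1}^r\big(c\,t_i^{-d/2}-p_{m_i}\big)\ge \big(c\,(4^j/r)^{-d/2}\big)^{r}$, and since $r=O(\log n)$ and $4^j\asymp n^{40}$ this is at least $n^{-Cd\log n/\text{??}}$ — which is \emph{too small}. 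The fix is that we do \emph{not} insist on hitting every $z_{i}$ exactly; instead only the \emph{final} landing is required to be exact (cost $t_r^{-d/2}\gtrsim n^{-20d}$ if $t_r\asymp 4^j$) while the intermediate points are required only to be hit to within $O(2^{m_i})$, which costs only a constant $c$ per step (Brownian motion has $\Omega(1)$ chance to make a displacement of order its standard deviation in a prescribed direction while staying in a half-space of width $\gg 2^{m_i}$). So the product is $c^{r}\cdot n^{-O(d)} = e^{-O(\log^2 n)}\cdot n^{-O(d)}$, still slightly subpolynomial — so in fact we should take $r=O(1)$: split $[v,w]$ into a \emph{bounded} number of segments, each of length scale $\asymp 2^{j}$, accept $O(1)$ Brownian cost per segment and one local-CLT cost $\asymp (4^j)^{-d/2}\asymp n^{-20 d}$ at the end, giving the bound $n^{-C_1+1}$ once $C_1$ is chosen with $C_1-1 \ge 20d+O(1)$. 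The geometric constraints — keeping each segment's tube inside $B^-$, avoiding heaviness of all intermediate blocks (this forces the tube widths $\ll 2^{m}$ for every relevant scale $m$, hence $m_i$ close to $j$, hence $O(1)$ segments), and keeping the segments mutually non-interacting — are what dictate these choices.

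\textbf{Main obstacle.} The delicate point is the interplay between three competing requirements: (a) to get an \emph{exact} hit we pay a local-CLT factor $\asymp(\text{segment time})^{-d/2}$, which wants few, long segments; (b) to keep every intermediate block non-heavy we need each segment's range to be small relative to its scale, which is automatic for a diffusive walk but must be verified using hypotheses (2) and (3) together with Claim~\ref{claim:fast}, and it forces the segment time to be $\asymp 4^j$ up to $\mathrm{polylog}$; (c) to apply Lemma~\ref{l:t.hit.couple} and glue independent pieces we need the tubes disjoint, which wants the $z_i$ far apart, again pushing toward $O(1)$ segments of length $\asymp 2^j$. Reconciling these so that the final bound is a \emph{polynomial} $n^{-C_1+1}$ (not merely $e^{-\mathrm{polylog}\,n}$) is the crux: it works precisely because $2^j\asymp n^{40}$ is polynomial in $n$, so a single local-CLT hit at scale $2^j$ costs only $n^{-O(d)}$, and because $C_1$ is a free large constant chosen \emph{after} $d$ is fixed, so we may absorb the $O(d)$ into $C_1-1$. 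Everything else — the Brownian estimates, the coupling error $p_{m_i}$, the deviation bounds — is routine given the results already established in Sections~\ref{sec:cyclic}--\ref{sec:relaxed} and the Brownian section.
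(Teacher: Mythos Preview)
Your proposal has a genuine gap, and the confusion stems from an arithmetic slip that sent you in the wrong direction.

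\textbf{The arithmetic error.} In your second attempt you write $c^{r}\cdot n^{-O(d)} = e^{-O(\log^2 n)}\cdot n^{-O(d)}$ with $r\asymp\log n$. But $c^{O(\log n)}=e^{-O(\log n)}=n^{-O(1)}$, which \emph{is} polynomial. Your ``slightly subpolynomial'' conclusion is wrong, and it is precisely this error that pushed you to the $r=O(1)$ scheme. In fact the paper \emph{does} use $\Theta(\log n)$ steps (more precisely $2m$ with $m\asymp j\asymp\log n$), each costing a constant, and the final bound $c^{2m}\ge n^{-C_1+1}$ is exactly how $C_1$ is chosen.

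\textbf{The exact-hit gap.} In your $r=O(1)$ scheme you appeal to a ``local CLT / transition-probability argument (inductive hypothesis (2))'' to upgrade a hit within $O(t_i^{2/5})$ of $z_{i+1}$ to an exact hit. Hypothesis~(2) is an \emph{upper} bound on transition probabilities; there is no inductive lower bound at scale $4^j$. Your suggestion of a ``short extra time increment handled by coupling with simple random walk'' is the right instinct, but for SRW estimates to apply the increment must have duration $\le\beta$, and at that scale the walk can only correct an error of order $\sqrt{\beta}$, not $(4^j)^{2/5}$. You cannot bridge this without a multiscale descent.

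\textbf{What the paper actually does.} The proof is organised around a single continuous curve $\gamma:[0,t]\to B^-$ going $v\to q\to w$ (with $q$ the centre of $B^-$), parametrised by $\sqrt{s}$ so that $\|\gamma'(s)\|\asymp 1/\sqrt{s}$ matches the diffusive scale. Time is split into \emph{dyadic} pieces $[\zeta_{i-1},\zeta_i]$ with lengths growing as $4^{i-m}t$ for $i\le m$ and then shrinking symmetrically for $m<i\le 2m$, where $4^{-m}t\asymp(\log\beta)^{O(1)}$. On each piece the Brownian approximation gives constant probability that $W_v$ stays in a tube of width $2^{i-m-3}\sqrt{t}$ around $\gamma$; this is the event $\mathcal B_i$, and $\P(\mathcal B_i\mid\mathcal F_{\zeta_{i-1}})\ge c$. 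The first and last pieces have duration between $\beta/4$ and $(\log\beta)^{C}$, so the walk there is (after coupling) a genuine continuous-time SRW, and the exact hit $W_v(t)=w$ costs only $(\log\beta)^{-C}$, not $(4^j)^{-d/2}$.

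\textbf{Self-interaction.} You propose to avoid self-interaction by making the tubes spatially disjoint, which is hopeless inside a box of side $2^{j-1}$ with segments of diffusive range $\asymp 2^{j}$. The paper instead makes each $\zeta_i$ a \emph{relaxed} time with respect to the path on $[\zeta_{i-1},\zeta_i]$ (this is built into the definition of $\zeta_i$), then invokes Lemma~\ref{lem:relaxed4} to couple with a fresh cyclic walk; the nested tube widths guarantee that on $\bigcap_{i'<i}\mathcal B_{i'}$ the fresh walk does not meet the history before $\zeta_{i-2}$. This is the mechanism you are missing.
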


We turn to state the second claim. To this end, recall that for $i\le n^3$, $W_i(t)$ is the cyclic random walk that starts in $p_i$ at time $\beta '$. We can also expose $W_i$ backward in time and so $W_i(s)$ is defined for all $s\in \mathbb R$. Let 
\begin{equation}
    \zeta _i:= \sup \{t<0: W_i(t)\notin u+[2^{j-1},7\cdot 2^{j-3}]\times [2^{j-2},3\cdot 2^{j-2}]^{d-1} \}.
\end{equation}

\begin{claim}\label{claim:green}
We have that 
\begin{equation}
    \mathbb P \big( \zeta _i \ge -4^{j-1} \text{ and }  W_i(\zeta _i^-) \in B^- \mid  \mathcal G (\beta ') \big) \ge c. 
\end{equation}
\end{claim}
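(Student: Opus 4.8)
The plan is to run $W_i$ \emph{backward} in time from $p_i$ and compare it with an independent diffusive walk. Since the interchange process is reversible, the time reversal of $W_i$ is again a cyclic walk, and conditionally on $\mathcal G_B(\beta')$ the only way its backward part can feel this conditioning is through the cyclic linking to the forward trajectory $S_i:=\{W_i(s):0\le s\le \tau_i\}$, because the other walks $W_\ell$, $\ell\ne i$, are driven by independent clocks and do not affect $W_i$. Using the time reversal of Lemma~\ref{l:t.hit.couple}, I would couple the backward part of $W_i$ on $[-4^{j-1},0]$ with an independent regenerated cyclic walk $\tilde W$ started at $p_i$, so that the two agree until $\tilde W$ first interacts, in a cyclically compatible way, with $S_i$.

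First I would show that $\tilde W$ exits $R:=u+[2^{j-1},7\cdot 2^{j-3}]\times[2^{j-2},3\cdot 2^{j-2}]^{d-1}$ through its left face $\{\text{first coordinate}=u_1+2^{j-1}\}$ within time $4^{j-1}$, with a margin of $2^{j-4}$, with probability at least $c$. This uses the inductive Brownian approximation (hypothesis~(4), applied at the scale $m$ with $4^m\asymp 4^{j-1}<4^n$), whose error $4^{2(j-1)/5}$ is negligible next to the relevant scale $2^{j-2}$ since $2^j\asymp n^{40}$, together with the Brownian box-exit estimates of Section~\ref{sec:brownian}: because $in^{35}\le n^{39}\ll 2^{j-2}$, the point $p_i$ lies at distance $\Theta(2^j)$ from every face of $R$, so a rescaled Brownian motion from $p_i$ reaches the left wall before the right wall with probability $\tfrac13$ by gambler's ruin, each of the other $d-1$ coordinates stays in its interval for time $4^{j-1}\asymp (2^{j-2})^2$ with probability bounded below, and these events are independent; adding the exponentially small coupling error preserves a lower bound $c$. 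The margin guarantees that, on this event, $\tilde W$ actually crosses the face $\{\text{first coordinate}=u_1+2^{j-1}\}$ while its other coordinates are still inside $[u_\ell+2^{j-2},u_\ell+3\cdot 2^{j-2}]\subset[u_\ell,u_\ell+2^j)$, so the exit is into $B^-$.

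Next I would show that with probability $1-C\beta^{-c}$ the walk $\tilde W$ does not interact with $S_i$ during $[0,4^{j-1}]$. Write $S_i=(S_i\cap N(p_i,L))\cup(S_i\setminus N(p_i,L))$ with $L$ a fixed power of $\beta$, so $L\ll 2^{j-4}$ for large $n$. By conditions~(4) and~(5) in the definition of $\tau_i$, no block $p_i+[-2^m,2^m)^d$ with $2^m\ge \log\beta$ is heavy for the forward trajectory, so $S_i$ obeys the density hypothesis of Claim~\ref{claim:A} around $p_i$; translating that claim so $\tilde W$ starts at the origin and applying it with this $L$ shows $\tilde W$ avoids $S_i\setminus N(p_i,L)$ on $[0,4^n]\supset[0,4^{j-1}]$ with probability $1-C\beta^{-c}$. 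For the part of $S_i$ inside $N(p_i,L)$, I would use that $\tilde W$ leaves $N(p_i,L)$ within time polynomial in $L$ and, being transient in $d\ge5$ (Claim~\ref{claim:W big}), reaches a much larger distance $L^{10}$ and never returns to $N(p_i,L)$ with high probability; meanwhile, by condition~(7) the forward trajectory never enters the left slab $\{\text{first coordinate}\in[u_1+2^{j-1},u_1+2^{j-1}+2^{j-3}]\}\cap R$ through which the desired exit passes, so no cyclically compatible interaction with $S_i$ occurs on the exit event.

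On the intersection of the two good events, which has conditional probability at least $c-C\beta^{-c}\ge c$ given $\mathcal G_B(\beta')$, the backward part of $W_i$ coincides with $\tilde W$ on $[-4^{j-1},0]$, stays in $R$ until it crosses the left face, with margin, into $B^-$ within backward time $4^{j-1}$; therefore $\zeta_i\ge -4^{j-1}$ and $W_i(\zeta_i^-)\in B^-$, which is the claim. I expect the third step to be the main obstacle: controlling the cyclic interaction of the backward walk with the forward history near $p_i$, where $S_i$ can be genuinely complicated and where one must track carefully which forward times are cyclically linked to which backward times; this is exactly where the stopping-time conditions~(4), (5) and~(7) are used, and where the case $\tau_i\ge\beta$ needs separate attention from the case $\tau_i<\beta^{\epsilon^3}$.
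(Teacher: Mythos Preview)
Your overall strategy --- couple the backward walk with an independent regenerated walk, use the Brownian approximation to get a constant probability of exiting $R$ through the left face, and show the coupling survives with probability $1-C\beta^{-c}$ --- is exactly the paper's approach. But there is a genuine error in your decoupling step.

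You write that ``the other walks $W_\ell$, $\ell\ne i$, are driven by independent clocks and do not affect $W_i$.'' This is false. All of the $W_\ell$ are cyclic walks in the \emph{same} interchange process: they share the same Poisson clocks on the edges of $\Z^d$. The $\sigma$-algebra $\mathcal G_B(\beta')$ reveals clock information along every trajectory $\{W_\ell(s):s\le\tau_\ell\}$, $\ell\le n^4$, not just along $S_i$. So the backward walk of $W_i$ must avoid the union $\bigcup_{\ell\le n^4}\{W_\ell(s):s\le\tau_\ell\}$ in order for the coupling with $\tilde W$ to survive. The paper handles this by observing that conditions~(4) and~(5) in the definition of the $\tau_\ell$'s ensure that this union does not make the blocks $p_i+[-2^m,2^m)^d$ super heavy, and then invokes Claim~\ref{claim:A} on the whole union (this is the ``same arguments as in Lemma~\ref{lem:relaxed4}'' remark). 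Your argument, as written, only controls $S_i$ and would fail if the backward walk wandered into one of the other revealed trajectories.

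A secondary point: your treatment of the near part $S_i\cap N(p_i,L)$ is not quite complete. You correctly note that condition~(7) keeps $S_i$ out of the left slab, but that only shows there is no interaction \emph{inside} the left slab; it does not rule out a cyclically compatible interaction with $S_i$ near $p_i$ during the initial escape of $\tilde W$ from $N(p_i,L)$. The paper's route (via the relaxed-time argument of Lemma~\ref{lem:relaxed4}, using condition~(2) to ensure the forward walk is far from $p_i$ at all forward times $\ge\beta/3$, which are the only ones cyclically linked to small backward times) handles this uniformly.
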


Using these claims we can prove Lemma~\ref{lem:connect}.

\begin{proof}[Proof of Lemma~\ref{lem:connect}]
Let $v\in \partial B \cap B^-$ and $w\in u+\{2^{j-1}-1\}\times [2^{j-2}, 3\cdot 2^{j-2}]^{d-1}$. By Claim~\ref{claim:chain}, on the event $\mathcal K (\beta ') \cap \{W(\xi _B)=v,\ \zeta _i \ge -4^{j-1}, \ W_i(\zeta _i^-)=w\}$ we have that
\begin{equation}\label{eq:use claim4.6}
    \mathbb P \big( W(t'+\zeta _i^-)=w \ | \ \mathcal F _{\xi _B}, (W_i(t),  t\in [\zeta _i,0])   \big)  \ge n^{-\gamma +1}.
\end{equation}
Indeed, let $W_v$ be a cyclic walk starting from $v$ that is independent of $\mathcal F_{\xi _B}$ and $(W_i(t),  t\in [\zeta _i,0])$. On the event 
\begin{equation}
\{ W_v(t'-\xi _B+\zeta _i)=w  \text{ and } W_v(s)\in B^- \text{ for all } s\le t'-\xi _B+\zeta _i\}    
\end{equation}
We can couple $W$ and $W_v$ such that $W(\xi _B+s)=W_v(s)$ for all $s< t'-\xi _B+\zeta _i$. We note that there is a subtle point in here that requires some attention. The sigma algebras $\mathcal F _{\xi _B}$ and $\sigma (W_i(t),  t\in [\zeta _i,0])$ contain some information about edges that enter $B^-$ and this information may ruin the coupling with $W_v$. However, the only information on such edges (except for the edges connected to $v$ and $w$) is that the edge does not ring in a certain time interval since otherwise the walk $W$ would have entered before $\xi _B$ or $W_i$ would have exited after $\zeta _i$. Thus, on the event that $W_v$ stays in $B^{-}$ the information revealed on this edges is compatible. Note also that $W_v$ does not return to $v$ at a cyclic time before $t'-\xi _B+\zeta _i$ since it is a cyclic walk and therefore, the ring at $\xi _B$ on the edge containing $v$ will not force $W$ out of $B^-$. The same holds at time $t'+\zeta _i$ on the edge containing $w$. 

Finally, using \eqref{eq:use claim4.6} we obtain that in the event $\mathcal K (\beta   ') \cap \{W(\xi _B)=v\}$
\begin{equation}
\begin{split}
       \mathbb P \big(& W(t')=p_i \mid \mathcal F _{\xi _B}, \ \mathcal G  (\beta ') \big) \\
      &  \ge \sum _w \mathbb P \big( \zeta _i \le 4^{j-1} \text{ and }  W_i(\zeta _i^-) =w , \ W(t'+\zeta _i^-)=w \mid  \mathcal F _{\xi _B},\  \mathcal G (\beta ') \big)   \\
      & \ge n^{-\gamma +1} \sum _w \mathbb P \big( \zeta _i \le 4^{j-1} \text{ and }  W_i(\zeta _i^-) =w \mid  \mathcal G (\beta ') \big) \ge cn^{-\gamma +1}\ge 2n^{-\gamma 
      },
\end{split}
\end{equation}
where the sum is over $w\in u+\{2^{j-1}-1\}\times [2^{j-2}, 3\cdot 2^{j-2}]^{d-1}$ and where the third inequality is by Claim~\ref{claim:green}.
\end{proof}

We turn to prove Claim~\ref{claim:chain}.

\begin{proof}[Proof of Claim~\ref{claim:chain}]
If $4^j\le \beta $ then the claim follows from a standard random walk estimates.

Suppose next that $4^j\ge \beta $. Let $q=u+2^{j-2}(1,2\dots ,2)$ be the center of $B^-$ and define the continuous curve $f :[0,t]\to B^-$ in the following way
\begin{equation}
    f (s):=\begin{cases}
    \sqrt{2s/t}\cdot 
q+\big( 1-\sqrt{2s/t} \, \big)v, \quad &s\le t/2 \\
\sqrt{2(t-s)/t}\cdot 
q+\big(1-\sqrt{2(t-s)/t} \, \big)w, \quad &t/2 \le s\le t
    \end{cases}.
\end{equation} 
The idea will be to use the Brownian approximation to show that $W_v(s)$ is close to $f (s)$ in any dyadic scale with positive probability.

Let $m:=\lfloor \log _4 (t / \sqrt{\beta})  \rfloor $. Note that $m$ is a positive integer since $t\ge 4^{j-1}$ and $\beta \le 4^j$. We define a sequence of stopping times $\zeta _i$ and events $\mathcal B_i$ for $0\le i\le 2m+1$ in the following way. We let $\zeta _{-1}=0$. For all $0\le i\le m$ we let $\zeta _i$ be the first time $s$ after $4^{i-m}t/2$ that is relaxed with respect to the path $(W_v(s'): s'\in [\zeta _{i-1},s])$ or time $0.6\cdot 4^{i-m}t$ is reached. 

We now define the events $\mathcal B _i$ for $0\le i\le m$. For $i=0$ we let $\mathcal B_0$ be the event that
\begin{enumerate}
    \item 
    We have that $\zeta _0 < 0.6\cdot 4^{-m}t$.
    \item 
    For all $s\le \zeta _1$ we have that $W_v(s)\in B^-$.
    \item 
    We have that $\big\| W_v(\zeta _0) -f (\zeta _0) \big\| \le 2^{-m-5}\sqrt{t}$.
\end{enumerate}
Note that $4^{-m}t= \Theta (\sqrt{\beta })$. By the inductive assumption at time $0.6\cdot 4^{-m}t$ we have that the path $(W(s):s\in [0,0.6\cdot 4^{-m}t])$ is a relaxed path with probability at least $1-\exp (-c\log ^2 \beta )$ and on this event $\zeta _0 < 0.6\cdot 4^{-m}t$. Moreover, before $4^{-m}t$, $W_v$ is a simple continuous time walk and therefore, using a straightforward random walk estimates we have that conditions (2) and (3) hold with probability at least  $ \beta  ^{-C}$. This shows that $\mathbb P (\mathcal B _0 )\ge  \beta ^{-C}$.

Next, for any $1\le i\le m$ we let $\mathcal B _i$ be the event that
\begin{enumerate}
    \item 
    We have $\zeta _i < 0.6\cdot 4^{i-m}t$.
    \item 
    For all $\zeta _{i-1} \le s\le \zeta _i$ we have that $
    \big\| W_v(s) -f(s) \big\| \le 2^{i-m-5}\sqrt{t}$.
\end{enumerate}
We claim that for all $1\le i\le m$ on the event $\bigcap _{i'< i}\mathcal B _{i'}$ we have that $\mathbb P (\mathcal B _i \ |  \ \mathcal F_{\zeta _{i-1}} )\ge c$. To this end, we couple $W_v$ with an independent regenerated walk $\tilde{W}$ starting from $W_v(\zeta _{i-1})$ such that $W_v(s)=\tilde{W}(s-\zeta _{i-1})$ until $W_v$ hits its history. Using the inductive coupling with Brownian motion at time $0.6\cdot 4^{i-m}t-\zeta _{i-1}$ we have that $\tilde{W}$ satisfies 
\begin{equation}\label{eq:event56}
    \mathbb P \big( \forall s\in [\zeta _{i-1},0.6\cdot 4^{i-m}t], \  \| \tilde{W}(s-\zeta _{i-1})-f(s)  \| \le 2^{i-m-5}\sqrt{t} \ | \ \mathcal F _{\zeta _{i-1}} \big) \ge c,
\end{equation}
where in here we used that $\|f'(s) \| \le C2^{m-i-j} \le C /(2^{i-m}\sqrt{t})$ in this time interval. Moreover, it is easy to check that on the event in \eqref{eq:event56} and $\bigcap _{i'<i}\mathcal B _i$, the walk $\tilde{W}$ will not intersect $(W_v(s), s\le \zeta _{i-2})$ before $0.6\cdot 4^{i-m}t$. 
Next, by Lemma~\ref{lem:relaxed4}, with probability at least $1-\beta ^{-c}$, the walk $W_v(s)$ for $s\in [\zeta _{i-1},\zeta _{i-1}+4^n]$ will not hit its history in the interval $[\zeta _{i-2},\zeta_{i-1}]$ since $\zeta _{i-1}$ is relaxed with respect to this history. It follows that with positive probability we have both, $W_v(s)=\tilde{W}(s-\zeta _{i-1})$ for all $s\in [\zeta _{i-1},0.6\cdot 4^{i-m}t]$ and the event in \eqref{eq:event56} holds. Finally, by the inductive hypothesis, the path $(\tilde{W}(s), s\in [\zeta _{i-1},0.6\cdot 4^{i-m}t])$ is relaxed with very high probability and on this event $\zeta _i<0.6\cdot 4^{i-m}t$. We obtain that on $\bigcap _{i'< i}\mathcal B _{i'}$ we have that $\mathbb P (\mathcal B _i \ |  \ \mathcal F_{\zeta _{i-1}} )\ge c$.

We turn to define the stopping times $\zeta _{i}$ and the events $\mathcal B _i$ for $m+1\le  i \le 2m$. We let $\zeta _i$ be the first time $s$ after $t-4^{m-i}t/2$ that is relaxed with respect to $(W_v(s'):s'\in [\zeta _{i-1},s] )$ or time $t-0.3\cdot 4^{m-i}t$ is reached. The event $\mathcal B _i$ for $m+1\le i \le 2m$ is defined to be
\begin{enumerate}
    \item 
    We have that $\zeta _i < t-0.3\cdot 4^{m-i}t$.
    \item 
    For all $\zeta _{i-1} \le s\le \zeta _i$ we have that $
    \big\| W_v(s) -f(s) \big\| \le 2^{m-i-3}\sqrt{t}$.
    \item 
    We have that $\big\| W_v(\zeta _i) -f (\zeta _i) \big\| \le 2^{m-i-5}\sqrt{t}$.
\end{enumerate}
Note that in here $W(s)$ becomes closer to $f(s)$ as $i$ increases and therefore we add condition (3) to the definition. Using the same arguments as in the case $i\le m$ we obtain that $\mathbb P (\mathcal B _i \ |  \ \mathcal F_{\zeta _{i-1}} )\ge c$ on the event $\bigcap _{i'< i}\mathcal B _{i'}$.

Finally, we define $\mathcal B _{2m+1}$ analogously to $\mathcal B _0$. The event $\mathcal B _{2m+1}$ is defined to be 
\begin{enumerate}
    \item 
    For all $\zeta _{2m} \le s \le t$ we have that $W_v(s)\in B^-$.
    \item 
    We have that $W_v(t)=w$.
\end{enumerate}
We claim that on $\bigcup _{i\le 2m}\mathcal B _i$  we have  $\mathbb P (\mathcal B _{2m+1} \ |  \ \mathcal F_{\zeta _{2m}} )\ge \beta ^{-C}$. Indeed, for all $s\le t-\beta $ on the event $\bigcup _{i\le 2m}\mathcal B _i$ we have that $\|W_v(s)-w\| \ge c\sqrt{\beta }$. Moreover, $t-\zeta _{2m}=\Theta (\sqrt{\beta })$ and therefore $\|W_v(\zeta _{2m})-w\|=\Theta (\beta ^{1/4})$. Thus, $(W_v(s), s\in [\zeta _{2m},t])$ will not interact with its history with high probability and we can couple it with a simple continuous time walk. Hence, using straightforward random walk estimates we get that conditions (1) and (2) holds with probability at least $ \beta ^{-C}$.

We obtain that 
\begin{equation}
    \mathbb P \bigg( \bigcap _{i=0 }^{2m+1} \mathcal B _i \bigg) \ge \beta ^{-C}c^{2m} \ge n^{-\gamma +1},
\end{equation}
where the last inequality holds as long as $\gamma $ is sufficiently large using that $m=O(\log n)$. This finishes the proof of the claim since the event of the claim holds on $\bigcap _{i= 0}^{2m+1}\mathcal B _i$.
\end{proof}

It remains to prove Claim~\ref{claim:green}.

\begin{proof}[Proof of Claim~\ref{claim:green}]
Let $W_i'$ be a regenerated walk independent of $\mathcal G(\beta ')$. We can couple $W_i$ and $W_i'$ such that 
\begin{equation}\label{eq:86}
    \mathbb P \big( \forall -4^n \le s \le 0, \ W_i(s)=W_i'(-s) \mid \mathcal G(\beta ') \big) \ge 1-\beta ^{-c}.
\end{equation}
This follows from the same arguments as in the proof of Lemma~\ref{lem:relaxed4}. Indeed, for all  $r\ge \log \beta $, the blocks $p_i+[-r,r]^d$ are not heavy with respect to the union of paths $\bigcup _{\ell \le n^3} \{ W_\ell (s):s\le \tau _\ell \}$ and therefore the walk $W_i'$ will avoid all of them with high probability. The detail are omitted. Next, using the Brownian approximation we obtain that  
\begin{equation}\label{eq:85}
    \mathbb P \big( \forall s\le 4^j, \ W_i'(s)\in u +[0,7\cdot 2^{j-3}]\times [2^{j-2},3\cdot 2^{j-2}] \text{ and } W_i'(4^{j-1})\in B^- \big) \ge c.
\end{equation}
On the intersection of \eqref{eq:86} and \eqref{eq:85} the event of the claim holds.
\end{proof}

\section{Heavy blocks and relaxed times}\label{sec:heavy}

The main result of this section shows that locally, most times are relaxed. In particular, the theorem establishes the inductive step on relaxed times. This result will allow us to concatenate shorter walks and prove the rest of the induction step. 
Recall that  $\epsilon :=1/(300d)$. 

\begin{thm}\label{thm:rel}
    We have that 
    \begin{equation}
        \mathbb P \Big( \forall t\le t_n-t_n^\epsilon ,\ \big| \mathcal T \cap [t,t+t_n^\epsilon ] \big| \ge \big( 0.9+2/n \big) t_n^\epsilon  \Big) \ge 1-e^{-cn^3}.
    \end{equation}
\end{thm}

In order to establish Theorem~\ref{thm:rel} we need the following two subsections. In Subsection~\ref{subsec:heavy} we show that there are no heavy blocks of side length larger than $t_n^{\delta }$. Then, in Subsection~\ref{sec:improvement} we use a multiscale scheme to show that most times are relaxed with respect to the local history of the walk.

It is convenient to rule out short periods where the walk moves too quickly. We thus work before the stopping time $\tau _{{\rm fast}}=\tau _{{\rm fast}}(n^3)$ as defined in \eqref{eq:tau fast}. By Claim~\ref{claim:fast} we have that 
\begin{equation}\label{eq:taufast}
 \mathbb P \big( \tau _{{\rm fast}} \le t_n \big) \le e^{-cn^3}.
\end{equation}
Note that before $\tau _{{\rm fast}}$ we have $\|W(t)\|< 5^n$.

\subsection{Heavy blocks}\label{subsec:heavy}
Recall that $\mathcal B _m$ is the set of blocks of the form $u+[0,2^m)^d$ where $u\in 2^m \mathbb Z ^d$. Let $m_0:=\lfloor \delta \log _2 t_n \rfloor $ and recall that $2^j$ is the scale of the escape algorithm where $j\approx \log \log t_n$. For any $ m \ge m_0$ and any $B\in \mathcal B _m$ we define the sequence of stopping times $s_i=s_i(B)$. We let $s_0$ be the first time the walk enters $B$. Then, we inductively define 
\begin{equation}
    s_{i}:= \inf \big\{ t>s_{i-1} +n4^m : t \text{ is the first time we enter a block }B'\in \mathcal B _j \text{ with } B'\subseteq B \big\}.
\end{equation}

We will show that it is unlikely that many of these stopping times occur before $t_n$. To this end, let $\tau '(B):=s_{n^{\gamma +3}}$ be the first time $n^{\gamma +3}$ of these stopping time occurs, where $\gamma $ is the constant from the definition of $\tau _{\text{esc}}$.

\begin{lem}\label{lem:tau'}
    For all $ m \ge m_0$  and $B\in \mathcal B _m$ we have that 
    \begin{equation}
     \mathbb P \big( \tau '(B) \le t_n\wedge \tau _{\rm{reg}} \big) \le e^{-cn^3}.   
    \end{equation}
\end{lem}

\begin{figure}[htp]
    \centering
     \includegraphics[width=14cm]{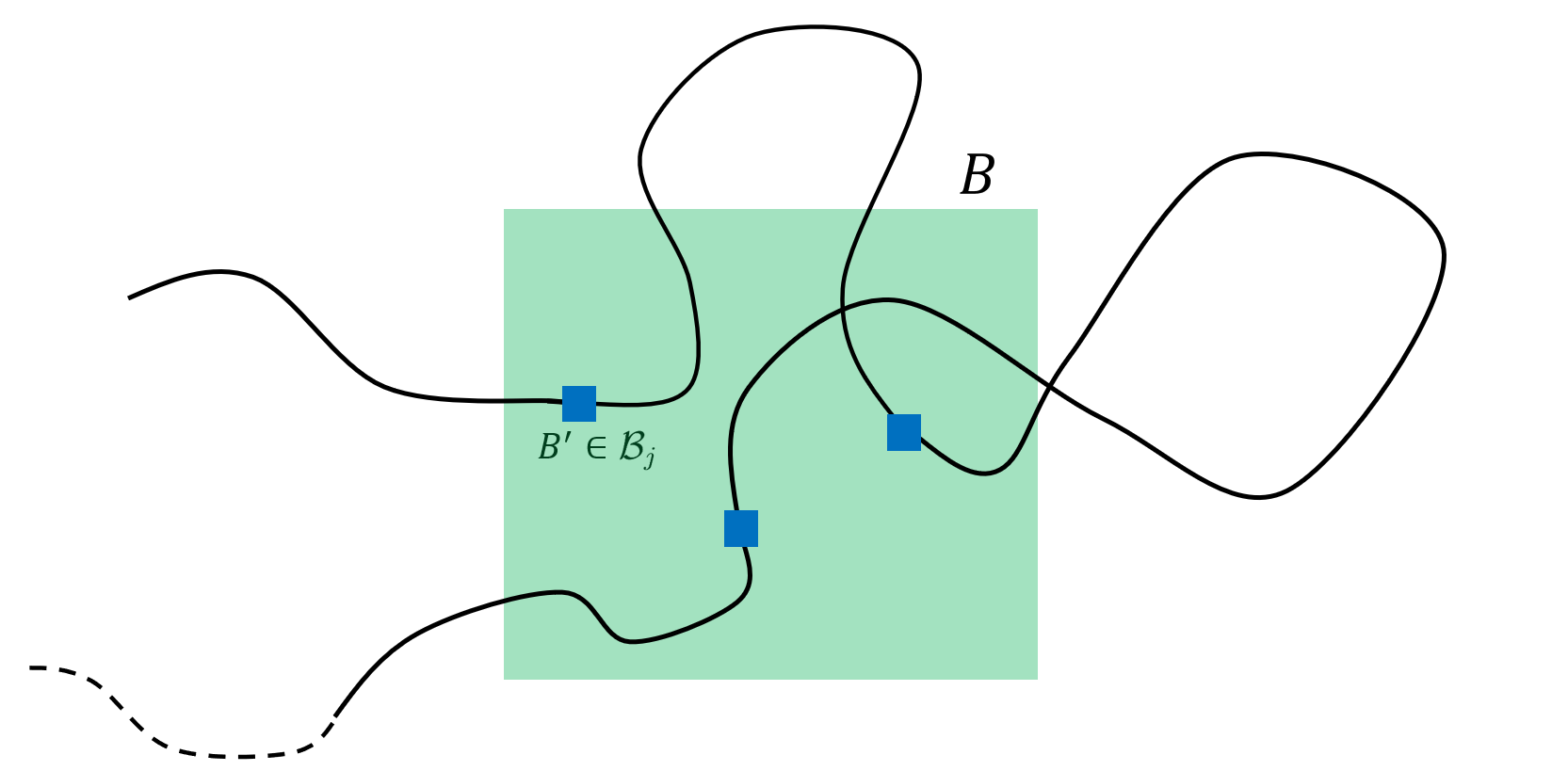}
        \caption{Heavy blocks. We wish to show that a block $B$ of side length $2^m\ge t_n^{\delta }$ is not heavy. Every time we enter a small blue block $B'\in \mathcal B_j$ inside $B$ for the first time, by the escape algorithm, we have a chance of at least $n^{-\gamma }$ to escape fast from $B$ and not return for a long time. If the escape attempt failed and we returned to $B$ after time $n4^m$ then we try again with a new small blue block. With very high probability, this should not happen more than $n^{\gamma +3}$ times which is not enough in order to make $B$ heavy.
    }
    \label{fig:heavy}
\end{figure}

\begin{proof}
    Using the definition of the escape event $\mathcal E (t)$ in \eqref{eq:def of E}, we obtain that on the event $\{ s_{i} < t_n \wedge \tau _{\rm reg} \wedge \tau _{\rm esc} \}$ 
    \begin{equation}
        \mathbb P \big( s_{i+1} \ge s_{i} +4^{n} \mid \mathcal F _{s_{i}} \big) \ge \mathbb P \big( \mathcal E  (s_{i}) \mid \mathcal F _{s_{i}} \big)  \ge n^{-\gamma }.
    \end{equation}
Moreover, the event $\{s_{i+1}\ge s_i+4^n\}$ can happen at most $4$ times before $t_n\le 4^{n+1}$ and therefore 
\begin{equation}
    \mathbb P \big( \tau '(B) =s_{n^{\gamma +3} } \le t_n \wedge \tau _{\rm reg} \wedge \tau _{\rm esc} \big) \le \mathbb P \big( \text{Bin}(n^{\gamma +3}, n^{-\gamma }) \le 4 \big)\le e^{-cn^3}. 
\end{equation}
This finishes the proof of the lemma using Theorem~\ref{thm:escape}.
\end{proof}

Next, define the stopping time 
\begin{equation}
     \tau ':= \min _{m \ge m_0} \min _{B\in \mathcal B _m} \tau ' (B).
\end{equation}
The next corollary follows immediately from Lemma~\ref{lem:tau'}.

\begin{cor}
    We have that
    \begin{equation}
    \mathbb P \big( \tau ' \le t_n \wedge \tau _{\rm reg} \big) \le e^{-cn^3}.
\end{equation}
\end{cor}

\begin{proof}
    On the event $\{ \tau _{\rm fast} > t_n \wedge \tau _{\text{reg}} \}$ all blocks $B$ at distance at least $5^n$ from the origin satisfy $\tau '(B)>s_0(B) > t_n \wedge \tau _{\rm reg}$. Moreover, for all $m\ge n$ and $B\in \mathcal B _m$ we clearly have that $\tau '(B)>s_1(B)>t_n$. Thus, by a union bound over $m_0\le m \le n$ and over blocks $B\in \mathcal B _m$ at distance at most $5^n$ from the origin we obtain
\begin{equation}\label{eq:tau'}
    \mathbb P \big( \tau ' \le t_n \wedge \tau _{\rm reg} \big) \le n5^{dn} e^{-cn^3} +\mathbb P \big( \tau _{\rm fast} \le t_n\wedge \tau _{\rm reg} \big) \le e^{-cn^3},
\end{equation}
where in the last inequality we used \eqref{eq:taufast}.
\end{proof}

\begin{lem}\label{lem:heavy2}
    Before $t_n\wedge \tau _{\rm reg}\wedge \tau _{\rm fast} \wedge  \tau ' $ non of the blocks $B(x,r)$ for $x\in \mathbb Z ^d$ and $r\ge t_n^\delta $ are heavy.
\end{lem}

\begin{proof}
Let $m \ge m_0$ and let $B\in \mathcal B 
 _m$. We will count up the accumulated number of vertices visited in $B$ up to time $t\le t_n\wedge \tau _{\rm reg}\wedge \tau _{\rm fast} \wedge  \tau ' $.

We can split this into 2 parts, the vertices visited in the time intervals $(s_i,s_i+n4^m)$ and those visited in the time intervals $(s_i+n4^m, s_{i+1})$. Before $\tau _{\rm fast}$, we visit at most $n^{C}4^{m}$ vertices in the time interval $(s_i,s_i+n4^m)$ for all $i\ge 0$. Thus, before $\tau '(B)$ we visit at most $n^{C}4^{m}$ vertices in $\bigcup _i (s_i,s_i+n4^m)$.

Next, by the definition of the stopping times $s_i$, for all $t\in (s_i+n4^m,s_{i+1})$ the walk $W(t)$ is either outside of $B$ or in a block $B'\in \mathcal B _j$ with $B'\subseteq B$ that has been visited already in some time interval $(s_{i'},s_{i'}+m4^n)$ for $i'\le i$. By the previous argument the total number of such blocks is at most $n^C4^m$ and hence the total number of vertices in these blocks is at most $n^C4^n$. This shows that the total number of vertices visited in $B$ before $t_n\wedge \tau _{\rm reg}\wedge \tau _{\rm fast} \wedge  \tau '$ is at most $n^C4^m$.

It follows that the blocks $B(x,r)$ for $r\ge t_n^{\delta }$ are not heavy. Indeed, let $m:=\lceil \log _2 r \rceil \ge m_0$ and cover $B(x,r)$ by $2^d$ blocks in $\mathcal B _m$. The number of vertices visited in each one of these blocks before $t_n\wedge \tau _{\rm reg}\wedge \tau _{\rm fast} \wedge  \tau '$ is at most $n^C4^m\le n^Cr^2$. Thus, the number of vertices visited in $B(x,r)$ is at most $n^Cr^2\le r^{5/2}$ and so the block is not heavy. 
\end{proof}

\subsection{The multiscale improvement}\label{sec:improvement}

In this section, we define a key construction in our analysis, the notion of good blocks.  Within a good block all paths either close or are relaxed and do not travel too far.  Furthermore, pairs of paths satisfy the \emph{pair proximity property} and so do not spend much time close to each other. In the definition of good blocks we consider blocks in $\mathcal B _k$ where we fix $k:=\lfloor 4\delta n \rfloor$. Finally, we say that a time $t$ is an integer cyclic time if $t(\!\!\!\!\mod \beta )$ is an integer.

\begin{definition}[Good blocks]
A block $B\in \mathcal B _{k}$ is good if all the following holds for all cyclic walks $W_1,W_2$ of length $t_k':=4^k/k^4$ starting from $u_1,u_2\in N(B,2^{k+2})$ at integer cyclic times $q_1, q_2\le \beta$.
\begin{enumerate}
    \item 
    The walk $W_1$ either closes before time $t_k'$ or is relaxed in time $t_k'$.
    \item
    We have that $$\max_{s\le t_k'} \|W_1(s)\|\le 2^{k-1}.$$
    \item 
    If $W_1$ and $W_2$ do not merge or close then,
    \[
|\fP_k(u_1,u_2,q_1,q_2)| \leq 3.9^k.
    \]
\end{enumerate}
\end{definition}

Note that the event that a block $B$ is good is measurable with respect to the  Poisson clocks inside $N(B,2^{k+3})$. Indeed, before a walk exits $N(B,6\cdot 2^k)$ condition $(2)$ has been already violated and so the block is bad.

The following lemma is in a sense at the heart of our multi-scale analysis.  It says that with very high probability, we will encounter at most $n$ bad blocks in a $5^n$ neighbourhood of the origin.

\begin{lem}\label{claim:multi}
With probability at least $1-e^{-cn^3}$, there are at most $n$ bad blocks $B\in \mathcal B _{k}$ within distance $5^n$ of the origin  \end{lem}

\begin{proof}
Each of the 3 properties required of a good block correspond directly to bounds in the inductive hypothesis. Thus, the probability that a fixed pair of paths fails to satisfy part (3) in the definition of a good block is at most $p_k \le e^{-k^2} \le e^{-cn^2}$. Similarly, the probability that a fixed path fails to satisfy parts (1) or (2) is at most $p_{k'}\le e^{-k'^2}\le e^{-cn^2}$, where $k'$ is the unique integer such that $4^{k'}<t_k'\le 4^{k'+1}$. The number of pairs of starting points and starting times is bounded by $2^{2dk}\beta^2\le e^{Cn}$ and so for a block $B\in \mathcal B _k$ 
\[
\P\big( B \hbox{ is bad} \big) \leq e^{-cn^2}.
\]

The goodness of blocks at distance $20$ are independent.  If there are more than $n$ bad blocks at distance $5^n$ from the origin, then we can find a subset $\mathcal A$ of these blocks with $|\mathcal A| = n/20^d$ such that any pair of blocks are at distance at least $20$. The number of such $\mathcal A$ is bounded by
\[
{(2\cdot 5^n)^d \choose n/20^d} \leq e^{Cn^2}.
\]
Thus, we obtain
\[
\P \big(\#\hbox{Bad blocks} \geq n \big) \le \sum _{\mathcal A} \prod _{B\in \mathcal A}\P\big( B \hbox{ is bad} \big)  \leq e^{Cn^2}e^{-cn^3} \leq e^{-cn^3}.
\]
This finishes the proof of the lemma.
\end{proof}

In the beginning of the induction, when $t_n\le \beta ^C$, a slightly different treatment is required.

\begin{lem}\label{claim:multi2}
Suppose that $t_n^{5\delta } \leq \beta$. With probability at least $1-e^{-cn^3}$, there are no bad blocks $B\in \mathcal B _{k}$ within distance $5^n$ of the origin.  
\end{lem}

\begin{proof}
In this case $t_{k}'\le t_k \le t_n^{5\delta } \le \beta$ and therefore $p_k=e^{-k^3}$ and $p_{k'}=e^{-k'^3}$. Thus, using the same arguments we have the improved probability bound for all $B\in \mathcal B _k$
\[
\P \big( B \hbox{ is bad} \big) \le e^{-cn^3}.  
\]
Taking a union bound over the $2^d 5^{dn}$ blocks we have that
\[
\P \big( \#\hbox{Bad blocks} \geq 1 \big) \le e^{-cn^3},
\]
completing the lemma.
\end{proof}

We say that a block $B\in \mathcal B _k$ is bad by time $t$ if we exposed by time $t$ a cyclic walk of length $t_k'$ starting from $N(B,2^k)$ at an integer time that violates conditions (1) or (2) or a pair of walks violating condition (3). We let $\Lambda_t$ be the union of all bad blocks discovered up to time $t$. Define the stopping time
\begin{equation}\label{eq:def of tau bad}
    \tau_{\text{bad}} := \begin{cases}
    \inf \{t:|\Lambda_t|> n2^k\} & \hbox{if } t_n^{5\delta } > \beta,\\
    \inf \{t:|\Lambda_t|>0\} & \hbox{otherwise}.
\end{cases}
\end{equation}
Before $\tau _{\rm fast}$ the walk stays within distance $5^n$ from the origin and cannot expose bad blocks outside of this region. Thus, by \eqref{eq:taufast}, Lemma~\ref{claim:multi} and Lemma~\ref{claim:multi2} we have that
\begin{equation}\label{eq:tau.bad}
\mathbb P \big(\tau_{\text{bad}}<t_n \wedge \tau _{\text{reg}} \big) \le \mathbb P \big( \tau_{\text{bad}}<t_n \wedge \tau _{\text{reg}} \wedge \tau _{{\rm fast}} \big) +e^{-cn^3} \le e^{-cn^3} .    
\end{equation}

Finally write
\begin{equation}\label{eq:def of hat tau}
\hat{\tau}:= t_n \wedge \tau _{\text{reg}} \wedge  \tau _{\text{fast}} \wedge \tau '\wedge\tau_{\text{bad}}.    
\end{equation}
By \eqref{eq:taufast}, \eqref{eq:tau'} and \eqref{eq:tau.bad} we have that $\mathbb P (\hat{\tau } \le t_n\wedge \tau _{\rm reg})\le e^{-cn^3}$. 

In the following lemma we show that before $\hat{\tau}$ and inside a good block we have a high density of relaxed times.

\begin{lem}\label{l:relaxed.pts}
Suppose that $t_k'\le t<\hat{\tau }$ and that $W(t)\notin \Lambda _t$. Then,
\begin{equation}
\big| \mathcal T \cap  [t-t_k',t] \big|  \ge \big( 0.9+4/n \big) t_k'.
\end{equation}
\end{lem}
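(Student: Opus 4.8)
The plan is to bound the measure of times in $\mathcal I:=[t-t_k',t]$ that fail to be relaxed$^*$ by $\tfrac15 t_k'$. Writing $B_{n'}(s):=W(s)+[-2^{n'-1},2^{n'-1})^d$, a time $s$ is non-relaxed$^*$ precisely when $B_{n'}(s)$ is super heavy$^*$ at time $s$ for some $n'\ge 1$, so I would bound, scale by scale, the measure of $\{s\in\mathcal I:B_{n'}(s)\text{ super heavy}^*\text{ at }s\}$. It is natural to split the scales into \textbf{(A)} $n'\ge k$; \textbf{(B)} $c_0k\le n'<k$, for a constant $c_0=c_0(d)\in(0,1)$ fixed below; \textbf{(C)} $\lfloor\epsilon^4\log_2\beta\rfloor\le n'<c_0k$; and \textbf{(D)} $2^{n'}<\beta^{\epsilon^4}$, where super heaviness is governed by the $\beta^{\epsilon^3}$ time-window rather than a vertex count.

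For regime (A): since $t<\hat\tau$, Lemma~\ref{lem:star} gives that no $B\in\mathcal B_\ell$ with $\ell\ge k$ is heavy$^*$ before time $t$. Covering $B_{n'}(s)$ by the at most $2^d$ aligned blocks of $\mathcal B_{n'}$ it meets, each holding fewer than $5^{n'-1}$ visited vertices outside $\Lambda_s$, yields $|\{u\in B_{n'}(s)\setminus\Lambda_s:\exists s''\le s,\ W(s'')=u\}|<2^d5^{n'-1}<6^{n'}$, using $n'\ge k\to\infty$; so regime (A) contributes nothing. For regime (B), since $t<\hat\tau\le\tau'$ we have $K(t,B)<n^\gamma$ for every $B\in\mathcal B_k$. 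Unwinding the excursion structure $(s_i,u_i,\eta_i)$ for such a block $B$: inside $B\setminus\Lambda_t$ the walk visits only the $<n^\gamma$ $\mathcal B_j$-blocks freshly entered at the times $s_i(B)$ (with the walk in $B\setminus\Lambda_t$ during each $[\eta_i,s_{i+1}]$ confined to already-entered such blocks) together with a per-excursion bounded amount of extra motion during the $(s_i,u_i)$ and $(u_i,u_i+k^34^k)$ windows, each of length $\le k^34^k$. Since $2^{jd}\le(2n^{41})^d$, this bounds the number of vertices of $B\setminus\Lambda_t$ ever visited by $n^{C}4^k$; covering $B_{n'}(s)$ by $\le 2^d$ blocks of $\mathcal B_k$ and comparing with $6^{n'}$ shows that for $c_0=c_0(C,d)$ large enough regime (B) also contributes nothing.

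Regime (C) is the heart. There $B_{n'}(s)$ lies inside at most $2^d$ blocks of $\mathcal B_k$. For those $B_0\in\mathcal B_k$ that are \emph{good} — all $B_0$ within a huge neighbourhood of $W(t)$, since $W(t)\notin\Lambda_t^+$ forces $d(W(t),\Lambda_t)\ge\tfrac12Q^32^k$ and bad blocks lie in $\Lambda_t$ — I would invoke Lemma~\ref{claim:cover}: the visits of $W$ to $B_0$ up to time $t$ lie within $n^{41}$ of the trajectories of $r\le n^{\gamma+7}+O(1)$ short cyclic walks $W_m$ of length $t_k'$ started at integer cyclic times in $N(B_0,2^k)$, each of which either closes before $t_k'$ or is a relaxed path and moves at most $2^{k-1}$. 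A relaxed path of length $t_k'$ spends at most $\tfrac1{10}t_k'$ of its time at non-relaxed times, and at relaxed times a scale-$n'$ block around it holds fewer than $6^{n'}$ of its own vertices; hence it cannot accumulate many vertices in a scale-$n'$ region without paying a definite fraction of its length (each extra vertex past threshold forces a non-relaxed time, which by $\tau_{\mathrm{fast}}$ costs $\ge n^{-5}$ of its time). Aggregating these overshoot costs across the excursion hierarchy and the nested scales $j\le n'<c_0k$ — rather than naively summing the per-walk bound over the $r$ covering walks, which loses a polynomial factor — keeps the number of vertices of $B_0\setminus\Lambda_t$ in any scale-$n'$ sub-block below $6^{n'}$. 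For the $O_d(1)$ not-good blocks near $\Lambda_t$ (present only when $2^{\epsilon n}>\beta$, since otherwise $\Lambda_t=\varnothing$ before $\hat\tau$), I would argue instead that $W(t)\notin\Lambda_t^+$ together with the buffer size $Q^3$ and $\tau_{\mathrm{fast}}$ force the walk to spend at most $\tfrac1{100}t_k'$ of $\mathcal I$ inside $\Lambda_t^+$, contributing at most $\tfrac1{100}t_k'$. Finally, in regime (D) super heaviness$^*$ of $B_{n'}(s)$ means $W(s)$ is within $O(\beta^{\epsilon^4})$ of $W(s')$ for some $s'\in[\alpha(s),s-\beta^{\epsilon^3}]$, and a Claim~\ref{claim:A}/Claim~\ref{claim:W big}-style avoidance estimate (as in the proofs of Lemmas~\ref{lem:relaxed4} and~\ref{l:relaxed*}) bounds such times by $\tfrac1{100}t_k'$. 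Summing, the non-relaxed$^*$ times in $\mathcal I$ have measure $<\tfrac15 t_k'$.

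The main obstacle is the good-block part of regime (C): passing from the \emph{local} relaxedness of each covering walk $W_m$ (against $W_m$'s own past) to the \emph{global} relaxedness$^*$ of $W$ (against all of $W$'s past, with $\Lambda_t$ removed), when the $n^{\gamma+7}$ covering walks may all crowd into one small region. A crude union loses a factor $n^{\gamma+7}$, so one must show that the overshoot of a relaxed path past $6^{n'}$ vertices in a scale-$n'$ region is paid for by the fraction of the path it costs, and then bookkeep these costs through the nested $s_i/u_i/\eta_i$ structure and the scales $j\le n'\le k$ at once; the separate handling of the $\Lambda_t^+$-buffer in the case $2^{\epsilon n}>\beta$ is a second delicate point.
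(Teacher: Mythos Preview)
Your approach has a genuine gap, and it is precisely the one you flag as ``the main obstacle'': regime (C). The paper's proof does \emph{not} attempt any direct vertex-count aggregation over the $n^{\gamma+7}$ covering walks. Instead it uses the \textbf{pair proximity property} --- condition~(3) in the definition of a good block --- which you never invoke. Concretely, the paper sets $t'=t-t_k'$, defines
\[
U_2:=\big\{s\in[t',t]:\ \min_{s'<t'}\|W(s)-W(s')\|\ge 1.9^k\big\},
\]
and shows $|U_2|\ge 0.9\,t_k'$ by applying Lemma~\ref{claim:cover} to obtain covering walks $(W(s):s\in[a_m,a_m+t_k'])$ and then, for each $m$, using the pair proximity bound of the good block to get
\[
\big|\{s\in[t',t]:\exists s'\in[a_m,a_m+t_k'],\ \|W(s)-W(s')\|\le 1.9^k\}\big|\le 3.9^k.
\]
Summing over the $r\le 3^d n^{\gamma+7}$ covering walks loses only $3^d n^{\gamma+7}\cdot 3.9^k\ll t_k'$. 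On $U_1\cap U_2$ (with $U_1$ the times relaxed with respect to $(W(x):x\in[t',s])$), the small-scale blocks $B_{n'}(s)$ with $2^{n'}<1.9^{k-2}$ are disjoint from the pre-$t'$ history, so their super-heaviness is decided by the $[t',s]$ segment alone and ruled out by $s\in U_1$. Your attempt to replace this with ``overshoot bookkeeping'' across scales and excursions is not an argument: relaxedness of a single covering walk controls the scale-$n'$ block around \emph{its} current position, not the vertex count it deposits in a \emph{fixed} scale-$n'$ block near $W(s)$, and you give no mechanism preventing many covering walks from piling into the same small region. The pair proximity property is exactly that mechanism.

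A second problem: the lemma is a \emph{deterministic} statement about any realization with $t<\hat\tau$ and $W(t)\notin\Lambda_t^+$; there is no probability to spend. Your regime (D) plan of using ``Claim~\ref{claim:A}/Claim~\ref{claim:W big}-style avoidance estimates (as in the proofs of Lemmas~\ref{lem:relaxed4} and~\ref{l:relaxed*})'' imports probabilistic bounds on an independent future walk, which is simply not applicable here. In the paper, regime (D) is absorbed into $s\in U_1\cap U_2$: membership in $U_2$ keeps the pre-$t'$ history at distance $\ge 1.9^k\gg\beta^{\epsilon^4}$, and membership in $U_1$ handles the $[t',s]$ segment (including the $\beta^{\epsilon^3}$-window condition). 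Your regimes (A) and (B) are fine and match the paper's large-scale step via Lemma~\ref{lem:star}, but the proof stands or falls on (C), and for that you need the pair proximity property.
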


\begin{figure}[htp]
    \centering
     \includegraphics[width=14cm]{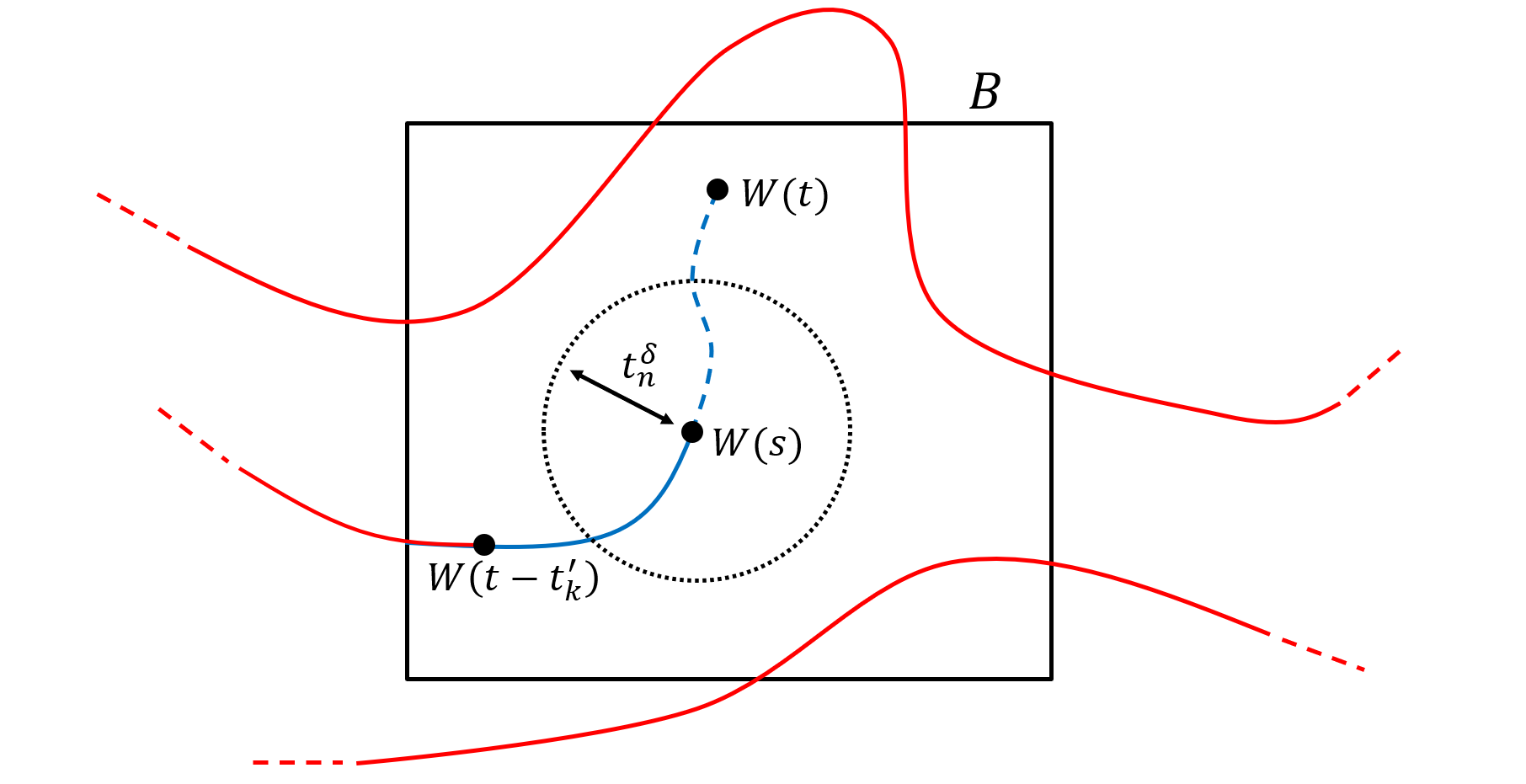}
        \caption{The proof of Lemma~\ref{l:relaxed.pts}. We wish to show that along the blue path, most of the times $s\in [t-t_k',t]$ are relaxed. The blocks $B(W(s),r)$ for $r\ge t_n^\delta $ are not heavy by Lemma~\ref{lem:heavy2} and therefore it suffices to show that the blocks $B(W(s),r)$ for $r\le t_n^\delta $ are not heavy for most times $s$. First, we claim that for most of the times $s$, the earlier history of the walk before $t-t_k'$ (the red paths in the picture) is at distance at least $t_n^\delta $ away from $W(s)$. Indeed, by Claim~\ref{claim:cover}, there are at most $n^C$ paths such that the earlier history is contained in a small neighbourhood of these paths. By the pair property, most of the time the blue path will be far from all of them and therefore far from the entire history before $t-t_k'$. Thus, we only have to show that for most times $s$ the blocks $B(W(s),r)$ for $r\le t_n^\delta $ are not heavy with respect to the blue path. This follows from the fact that $B$ is good and therefore the blue path is relaxed.
    }
    \label{fig:relaxed}
\end{figure}

For the proof of the lemma we will need the following claim.

\begin{claim}\label{claim:cover}
Suppose that $t\le \hat{\tau }$ and let $B\in \mathcal B _k$. If $B$ is a good block at time $t$ then we can construct $\ell \le n^{\gamma +7}$ cyclic integer times $a_1,\dots ,a_\ell  < t-t_k'$  with $W(a_i)\in N(B,2^{k+1})$ such that the walk $(W(s):s\le t)$ inside $N(B,2^k)$ was always at distance at most $n^{41}$ away of one of the cyclic walks $(W(s):a_i\le s \le a_i+t_k')$. That is, we have that
\begin{equation}
N(B,2^k)\cap \{W(s): s\le t \}  \subseteq  \big\{ u : \exists i\le r , \ s\in [a_i,a_i+t_k'], \ \|u-W(s)\|\le n^{41} \big\}.
\end{equation}
\end{claim}

\begin{proof}
The block $N(B,2^k)$ is the union of $3^d$ blocks in $B'\in\mathcal B _k$. Recall the definition of $s_i=s_i(B')$ from section~\ref{subsec:heavy}. Since $t\le \tau '$, the set of times
$$T:= [0,t-2] \cap \bigcup _{i} \big( s_i(B'),s_i(B')+n4^k \big) $$ 
is contained in at most $n^{\gamma }$ intervals of length $n4^k$. Thus, this set can be covered by at most $n^{\gamma +7}$ intervals $(a_m,a_m+t_k')$ such that $a_m<t-t_k'$ is a cyclic integer. Moreover, by the definition of the stopping times $s_i(B')$ and $\tau _{\text{fast}}$, any $s\le t $ such that $W(s)\in B'$ satisfies $\|W(s)-W(s')\|\le n^{41}$ for some $s'\in T$.

Next, note that if $W(a_m)\notin N(B,2^{k+1})$ then the walk $(W(s):a_m\le s\le a_m+t_k')$ do not enter $N(B,2^{k})$. Indeed, otherwise, using that $a_m+t_k'< \tau _{\rm fast}$, there exists some cyclic integer time $x\in [a_m,a_m+t_k']$ and $x<y<a_m+t_k'$ such that  $3\cdot 2^{k-1}\le d(W(x),B) \le 2^{k+1}$ and $W(y)\in N(B,2^{k})$. This is a contradiction since $B$ is good and the cyclic walk $(W(z):x<z<x+t_k')$ cannot travel so fast. The claim now follows by taking all the $a_m$ for which $W(a_m)\in N(B,2^k)$ obtained from the $3^d$ blocks $B'$ inside $N(B,2^k)$.
\end{proof}

We can now prove Lemma~\ref{l:relaxed.pts}.

\begin{proof}[Proof of Lemma~\ref{l:relaxed.pts}]
For simplicity, assume that $t':=t-t_k'$ is a cyclic integer and let $B\in \mathcal B _k$ be the block containing $W(t)$. By the assumption $B$ is good. Define the sets
\begin{equation}
    U_1 := \big\{ s\in [t',t] : s \text{ is relaxed with respect to the path } (W(x) : x\in [t',s]) \big\}
\end{equation}
and
\begin{equation}
    U_2 : = \big\{ s\in [t',t] : \min _{s'<t'} \| W(s)-W(s') \| > t_n^\delta  \big\}.
\end{equation}
Since $B$ is good and using the same arguments as in the proof of Claim~\ref{claim:cover}, we have that $W(t')\in N(B,2^k)$ and therefore path $(W(s), s\in [t',t])$ is a relaxed path. It follows that $|U_1|\ge \big( 0.9+1/k' \big) t_k' \ge \big( 0.9+5/n \big) t_k'$.

We turn to show that $U_2$ is large. By Claim~\ref{claim:cover} with $t'$ as $t$, there are $\ell \le n^{\gamma +7}$ cyclic integers $a_1\dots ,a_\ell <t'-t_k'$ with $W(a_i)\in N(B,2^{k+1})$ such that 
\begin{equation}\label{eq:cover}
N(B,2^k) \cap \{W(s) : s\le t' \}  \subseteq 
    \big\{ u : \exists i\le r , \ s\in [a_i,a_i+t_k'], \ \|u-W(s)\|\le n^{41} \big\}
\end{equation}

It is clear that the cyclic walks $(W(s):s\in [t',t])$ and $(W(s): s\in [a_i,a_i+t_k'])$ do not close or merge and therefore, using condition (3) in the definition of a good block we have that
\begin{equation}\label{eq:pair}
    \big|  \big\{ s\in [t',t] : \exists s'\in [a_i,a_i+t_k'] \text{ such that } \|W(s)-W(s')\|\le 1.9^k \big\} \big| \le 3.9^k.
\end{equation}
It follows from \eqref{eq:pair} and \eqref{eq:cover} that 
\[
|U_2|\geq t_k'- n^{\gamma +7} \cdot 3.9^k,
\]
where in here we also used that $t_n^\delta <\!<1.9^k$ and that the walk $(W(s):s\in [t',t])$ is contained in $N(B,0.6\cdot 2^{k})$ by the same arguments as in Claim~\ref{claim:cover}. Thus,
\[
|U_1\cap U_2| \ge \big(0.9 +4/n\big) t_k'.
\]
It remains to prove that every $s\in U _1 \cap U_2 $ is a relaxed time. First, we claim that for all $0\le s'\le s-\beta ^{3\delta }$ we have that $\|W(s')-W(s)\| \ge \beta ^\delta  $. Indeed, since $s\in U_1$ this holds for all $t'\le s'\le s-\beta ^{3\delta }$ and since $s\in U_2$, for all $s'\le t'$ we have $\|W(s')-W(s)\| \ge t_n^\delta \ge  \beta ^\delta$.

Next, we show that for all $r\ge \beta ^\delta $ the blocks $B(W(s),r)$ are not heavy. If $r\ge t_n^\delta $ then $B(W(s),r)$ is not heavy by Lemma~\ref{lem:heavy2}, using that $s\le t \le \hat{\tau }$. Finally, let $\beta ^\delta \le r\le t_n^{\delta }$. Since $s\in U_1$ the block $B(W(s),r)$ is not heavy with respect to the path $(W(s):s\in [t',t])$. Since $s\in U_2$, the earlier history is disjoint from $B(W(s),r)$ and so this block is not heavy.
\end{proof}

\begin{cor}\label{cor:relaxed.pts}
Suppose that $t<\hat{\tau }$. Then we have that
\begin{equation}
\big| \mathcal T \cap  [t-t_n^{\epsilon /2},t]  \big| \ge \big( 0.9+3/n \big)t_n^{\epsilon /2}.
\end{equation}
\end{cor}
\begin{proof}
If $t_n^{5\delta } \leq \beta $ then $|\Lambda_t|=0$ by the definition of $\tau_{\rm bad}$ and therefore, the corollary follows immediately from Lemma~\ref{l:relaxed.pts}.  Next, assume that $t_n^{5\delta } \geq \beta $ and consider the set 
\begin{equation}
    U :=\big\{ s\in [t-t_n^{\epsilon /2},t] : W(s)\notin \Lambda _s \big\}.
\end{equation}
Before $\tau _{\rm reg}$ we spend at most $\beta $ time in each vertex. Moreover, before $\tau _{\rm bad}$ there are at most $n$ bad blocks and therefore 
$$|U| \le \beta |\Lambda _t| \le \beta n 2^{dk} \le t_n^{5\delta }n2^{dk}\le Cnt_n^{3\delta d} \le  t_n^{\epsilon /3},$$
where in the fourth inequality we used that $d\ge 5$ and that $k\le 4\delta n$ and in the fifth inequality we used that $\delta =\epsilon /(10d)$. Finally, by Lemma~\ref{l:relaxed.pts}, for each time $s\in [t-t_n^{\epsilon /2},t] \setminus U$ there is a density of at least $(0.9+4/n)$ of relaxed times in $[s-t_k',s]$. The corollary follows from this. 
\end{proof}

\begin{remark}
    The last proof is the reason we have to modify the argument slightly in the beginning of the induction, when $t_n^{5\delta }\le \beta $. In the proof we used the straightforward bound that a cyclic walk stays at most $\beta $ time in each vertex before closing and therefore the walk doesn't spend much time in the bad blocks. However, when $\beta \ge t_n^{5\delta } $ this bound is too large in terms of $t_n$. We overcome this issue by obtaining better probability bounds when $t_n\le \beta $. These bounds are used to show that with sufficiently high probability there are no bad blocks at all when $t_n^{5\delta }\le \beta $.  
\end{remark}

We can now prove Theorem~\ref{thm:rel}.  Define the event $ \cG(1):=\big\{ \hat{\tau } \ge \tau _{\text{reg}} \wedge t_n  \big\}$ and note that $\mathcal P (\mathcal G (1))\ge 1-e^{-cn^3}$. This only involves the behaviour of the process up to the first regeneration time.  We let $\cG(i)$ be the analogous event for the $i$th iteration of the process between the $i-1$th and $i$th regeneration times. We clearly have that $\mathbb P (\mathcal G (i))\ge 1-e^{-cn^3}$ for all $i\ge 0$. Next, we let
\[
\cG:=\Big\{\fR(t_n) \leq n^3, \cG\big(1\big),\ldots,\cG\big(\fR(t_n)+1\big)\Big\}.
\]

\begin{lem}\label{l:cG}
We have $P(\cG ) \geq 1-e^{-cn^3}$.
\end{lem}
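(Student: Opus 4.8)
\emph{The plan is to bound the two ``bad'' possibilities separately: too many regenerations by time $t_n$, and some iteration $\cG(i)$ failing.} First I would control $\fR(t_n)$. Since a cycle can only close at an integer multiple of $\beta$, inductive hypothesis (2) gives $\P[\tau_{\text{reg}}^{(j)}<\infty]=\P[W(i\beta)=0\text{ for some }i]\le \sum_{i\ge 1}(i\beta)^{-d/2+0.1}\le 2/\beta$ for large $\beta$; in fact this same bound controls the probability that a fresh copy regenerates at all, and restricting the sum to $i\beta\le t_n$ shows that each new cyclic walk has only a small probability of closing before time $t_n$. Hence $\fR(t_n)$ is stochastically dominated by a geometric-type variable, and $\P[\fR(t_n)>n^5]\le (2/\beta)^{n^5}\le e^{-9n^2}$ for large $\beta$, say.

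Next I would bound $\P[\cG(i)^c]$ for a single iteration. By definition $\cG(i)^c=\{\hat\tau<\tau_{\text{reg}}\wedge t_n\}$ for the $i$th independent copy, and $\hat\tau=\tau_{\text{reg}}\wedge t_n\wedge\tau_{\text{fast}}\wedge\tau_{\text{esc}}\wedge\tau_{\text{bad}}\wedge\tau'$. So $\cG(i)^c$ forces one of $\tau_{\text{fast}}$, $\tau_{\text{esc}}$, $\tau_{\text{bad}}$, $\tau'$ to occur before $\tau_{\text{reg}}\wedge t_n$. Each of these has already been estimated on this exact event: Lemma~\ref{l:tau.fast} gives $\P(\tau_{\text{fast}}<t_n\wedge\tau_{\text{reg}})\le e^{-4n^4}$; Theorem~\ref{thm:escape} gives $\P(\tau_{\text{esc}}<t_n\wedge\tau_{\text{reg}})\le e^{-cn^4}$; equation~\eqref{eq:tau.bad} gives $\P(\tau_{\text{bad}}<t_n\wedge\tau_{\text{reg}}\wedge\tau_{\text{fast}})\le e^{-9n^2}$; and Proposition~\ref{p:hat.tau} gives $\P(\tau'\le\hat\tau)\le e^{-4n^4}$. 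A union bound over these four events yields $\P[\cG(i)^c]\le e^{-8n^2}$ (the weakest term, from $\tau_{\text{bad}}$, dominates), uniformly in $i$ since each iteration is an independent fresh interchange process with the same law.

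Finally I would assemble the pieces. Write
\begin{equation}
\P[\cG^c]\le \P\big[\fR(t_n)>n^5\big]+\sum_{i=1}^{n^5+1}\P\big[\fR(t_n)\le n^5,\ \cG(i)^c\big]\le e^{-9n^2}+(n^5+1)e^{-8n^2}\le e^{-8n^2+1},
\end{equation}
which after absorbing the constant (or replacing $8n^2$ by a marginally smaller exponent and noting the statement only claims $1-e^{-8n^2}$, adjusting $n_0$) gives $\P[\cG]\ge 1-e^{-8n^2}$. \emph{The main subtlety} is making sure the bound on each $\cG(i)^c$ is genuinely uniform in $i$ and that conditioning on $\{\fR(t_n)\le n^5\}$ does not interfere: this is handled by noting the iterations $W^{(i)}$ are i.i.d.\ and that $\cG(i)$ depends only on the $i$th copy run until its own closing time, while $\{\fR(t_n)\le n^5\}$ depends on how many of the first $n^5$ copies close early; one bounds $\P[\cG(i)^c]$ unconditionally for each fixed $i$ and then sums, which is exactly the union bound above. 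The only real ``work'' is bookkeeping — all the hard probabilistic estimates have already been done in Sections~\ref{sec:escape} and~\ref{sec:heavy}.
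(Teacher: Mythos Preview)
Your approach is essentially identical to the paper's: bound $\P[\cG(i)^c]$ via the four stopping-time estimates (Lemma~\ref{l:tau.fast}, Theorem~\ref{thm:escape}, equation~\eqref{eq:tau.bad}, Proposition~\ref{p:hat.tau}), bound the number of regenerations, and union bound.

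Two bookkeeping points need fixing. First, the union of the four events gives $\P[\cG(i)^c]\le 2e^{-9n^2}$, not $e^{-8n^2}$: the $\tau_{\text{bad}}$ term is $e^{-9n^2}$, and you need that extra room in the exponent so that $(n^5+1)\cdot 2e^{-9n^2}\le e^{-8n^2}$ cleanly --- with $e^{-8n^2}$ per iteration the final sum $(n^5+1)e^{-8n^2}$ genuinely exceeds $e^{-8n^2}$ and cannot be ``absorbed''. Second, at this stage of the induction the transition-probability bound is only available for $t\le 4^n$, not for $t\le t_n$ (which may be as large as $4^{n+1}$), so your sum $\sum_{i\beta\le t_n}\P[W(i\beta)=0]$ has terms with $4^n<i\beta\le t_n$ that are not controlled by hypothesis (2). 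The paper handles this by observing that in each subinterval $[(i-1)4^n,i4^n]$ the number of regenerations is stochastically dominated by a geometric variable (each fresh walk closes within time $4^n$ with probability $\le 2/\beta$); an equivalent patch is to pigeonhole and note that at most four of the $\tau_{\text{reg}}^{(j)}$ can exceed $4^n$, so at least $n^5-3$ of them are $\le 4^n$ and the bound $(2/\beta)^{n^5-3}$ still suffices.
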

\begin{proof}
By the same arguments as in the proof of Theorem~\ref{t:main.thm}, the number of regenerations between $[(i-1)4^n,i4^n]$ is stochastically dominated by a geometric random variable and so
\[
\P\big( \fR(t_n)\geq n^3 \big) \leq e^{-cn^3}.
\]
Taking a union bound completes the lemma.
\end{proof}

\begin{proof}[Proof of Theorem~\ref{thm:rel}]
By Lemma~\ref{l:cG}, it suffices to show that on the event $\cG$ we have for all $t\le t_n-t_n^{\epsilon }$,
\begin{equation}\label{eq:most.relax*}
\big| \mathcal T \cap [t,t+t_n^{\epsilon }]  \big| \ge \big( 0.9+2/n \big) t_n^{\epsilon } .
\end{equation}
Note that as stated, Lemma~\ref{cor:relaxed.pts} applies only up to the first regeneration time. However, we can apply it between any two regeneration times. Thus, for any $1\leq \ell \leq  \lfloor t_n^{\epsilon /2} \rfloor $ such that there is no regeneration time in $[t+(\ell-1)t_n^{\epsilon /2},t+\ell t_n ^{\epsilon /2}]$ we have that
\[
\big| \mathcal T \cap  [t+(\ell-1)t_n^{\epsilon /2} ,t+\ell t_n ^{\epsilon /2}] \big| \ge \big( 0.9+3/n \big) t_n^{\epsilon /2}.
\]
On the event $\cG$ there are at most $n^3$ regeneration times so
\begin{equation}
\big| \mathcal T \cap [t,t+t_n^{\epsilon }]  \big| 
\ge \big( \lfloor t_n^{\epsilon /2}  \rfloor  - n^3\big) \big( 0.9+3/n \big) t_n^{\epsilon /2} \ge \big( 0.9+2/n \big) t_n^{\epsilon }.
\end{equation}
Hence equation~\eqref{eq:most.relax*} holds which completes the proof.
\end{proof}

\section{Concatenation embedding and transition probabilities}\label{sec:concatenation}

We will often use the following lemma in order to concatenate independent shorter walks, for which the induction hypothesis holds, into one long walk. Throughout this section $W$ will be a regenerated walk running for time $t_n$. For walks $W_1$ and $W_2$ of length $s_1$ and $s_2$ respectively we define the concatenation $\tilde{W}$ of $W_1$ and $W_2$ by 
\begin{equation}
    \tilde{W}(s):=\begin{cases}
    \quad \quad \quad  W_1(s) \quad \quad \quad \quad \quad \quad \ \  s\le s_1
\\
W_1(s_1)+W_2(s-s_1) \quad s_1\le s \le s_1+s_2
    \end{cases}.
\end{equation}
We similarly define the concatenation of more than two walks.

\begin{lem}\label{lem:conca}
Let $s_1,s_2$ such that $s_1+s_2 \le t_n$ and let $W_1$ and $W_2$ be independent regenerated walks running for times $s_1$ and $s_2$ respectively. Let $\tilde{W}$ be the concatenation of $W_1$ and $W_2$. Then, with the natural coupling of $W$ and $\tilde{W}$ we have
\begin{equation}
\mathbb P \Big( \max _{s\le s_1+s_2} \big\| W(s)-\tilde{W}(s) \big\| \ge t_n^{2\epsilon} \Big) \le  e^{-cn^3}.    
\end{equation}
\end{lem}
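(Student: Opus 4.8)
The plan is to compare $W$ and $\tilde W$ on $[0,s_1]$ and on $[s_1,s_1+s_2]$ separately. Under the natural coupling we drive $W$ and $W_1$ by the same clocks on $[0,s_1]$, so that $W(s)=\tilde W(s)=W_1(s)$ for $s\le s_1$, and it remains to control $\|(W(s)-W(s_1))-W_2(s-s_1)\|$ for $s\in[s_1,s_1+s_2]$. The walk $W_2$ will be coupled to the increments of $W$ after $s_1$ by repeatedly applying Lemma~\ref{l:t.hit.couple}: starting from a suitable time, drive the continuation of $W$ and the continuation of $W_2$ by one common fresh random walk, so that the two increment processes agree until the first time one of them interacts with the piece of its own history that the fresh walk does not see — i.e.\ until the relevant $\tau_{\text{hit}}$. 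Between re-couplings the two walks move in lockstep, and at the moment a coupling breaks they differ by a single step.

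First I would fix the re-coupling times. Let $\mathcal E$ be the event of Theorem~\ref{thm:rel}; it has probability at least $1-e^{-8n^2}$, and on it every length-$t_n^{2\epsilon}$ window inside $[0,t_n]$ contains at least $\tfrac34 t_n^{2\epsilon}$ \emph{good} times $s$, meaning $\mathbb P(\tau_{\text{hit}}(s)\ge s+4^n\mid\mathcal F_s)\ge 3/4$. (By a density argument the same windows contain many times that are good simultaneously for the analogous hitting time of $W_2$'s post-regeneration history, which on the coupled event is contained in that of $W$.) We also intersect with $\{\tau_{\text{fast}}(t_n^{2\epsilon})\ge t_n\}$, which by Claim~\ref{claim:fast} costs at most $Ct_n^2 e^{-ct_n^{2\epsilon}}\le \tfrac12 e^{-8n^2}$; on this event $W$ moves at most $Ct_n^{2\epsilon}$ during any window of length $t_n^{2\epsilon}$. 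Now run the following procedure: couple the continuation of $W$ past $s_1$ with $W_2$ by a common fresh driving walk (Lemma~\ref{l:t.hit.couple}); if the coupling survives past $s_1+s_2$ we are done with zero error. Otherwise it breaks at $\rho_1<s_1+s_2$; let $t^*_1$ be the first good time $\ge\rho_1$ (at distance $\le t_n^{2\epsilon}$ on $\mathcal E$), re-couple there with a new common fresh driving walk, and iterate, generating break times $\rho_1<\rho_2<\cdots$ and re-coupling times $t^*_1<t^*_2<\cdots$.

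Next I would bound the number of re-couplings. Call round $j$ a \emph{success} if $\tau_{\text{hit}}(t^*_j)\ge t^*_j+4^n$ and a \emph{failure} otherwise; since $t^*_j$ is good, $\mathbb P(\text{round }j\text{ is a failure}\mid\mathcal F_{t^*_j})\le 1/4$. A non-terminal success advances time by at least $4^n$, and as $s_2\le t_n\le 4^{n+1}$ there are at most $5$ successes in all; hence if $R_f$ failures occur then the $m$-th failure, if any, falls within the first $m+5$ rounds, and telescoping the conditional bound along the increasing stopping times $t^*_1<t^*_2<\cdots$ (note $\{\text{round }i\text{ fails}\}\in\mathcal F_{\rho_i}\subseteq\mathcal F_{t^*_{i+1}}$) gives $\mathbb P(R_f\ge m)\le\binom{m+5}{5}4^{-m}$. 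Taking $m=\lceil 7n^2\rceil$ makes this at most $\tfrac12 e^{-8n^2}$. On the complement, together with $\mathcal E$ and the $\tau_{\text{fast}}$ event, the coupling is intact except during at most $R_f+5\le 8n^2$ gap intervals of length $\le t_n^{2\epsilon}$, across each of which $W$ and $W_2$ separate by at most $Ct_n^{2\epsilon}$, plus one lattice step at each break. Thus $\max_{s\le s_1+s_2}\|W(s)-\tilde W(s)\|\le Cn^2 t_n^{2\epsilon}\le t_n^{3\epsilon}$ for large $n$, since $t_n^{\epsilon}\ge 2^{n/(100d)}$ dominates $n^2$. Adding the three error probabilities yields the claimed bound $2e^{-8n^2}$.

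The main obstacle is the measure-theoretic bookkeeping that makes this a legitimate coupling. One must verify that re-sampling the driving walk of $W$ — and of $W_2$ — after each $t^*_j$, which is a stopping time for the relevant walk once $\mathcal F_{s_1}$ is fixed, does not alter the marginal law of $W$, $W_1$ or $W_2$; and that on the success event the post-$t^*_j$ increments of $W$, which live in the interior of a cyclic period and carry the full history $\{W(s):\alpha(t^*_j)\le s\le t^*_j\}$, genuinely coincide with those of a freshly started regenerated walk — this is precisely Lemma~\ref{l:t.hit.couple} combined with the fact that a good time avoids that whole history for time $4^n$. A secondary point is the density argument that the good times of $W$ also serve $W_2$ after a repaired break, and the precise accounting of which history each walk interacts with in the transitional intervals.
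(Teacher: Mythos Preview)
Your strategy coincides with the paper's: drive $W$ and $\tilde W$ by a common simple random walk, note that they agree on $[0,s_1]$, and on $[s_1,s_1+s_2]$ alternate between ``coupled'' stretches (where the two walks take identical increments because neither is forced by its pre-restart history) and short ``gap'' stretches ending at the next time with a good conditional chance of a long coupled run, such times being supplied by Theorem~\ref{thm:rel}.

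The paper's implementation is cleaner in one respect that eliminates what you call your ``main obstacle''. It uses \emph{one} driving walk $X$ throughout and never re-samples. Both $W$ and $\tilde W$ are deterministic functionals of $X$; the restart times $\zeta_i$ are defined as ordinary $\cF_t$-stopping times (the first $t>\zeta_{i-1}'$ with $\P(\tau_{\text{hit}}(t)\wedge\tilde\tau_{\text{hit}}(t)\ge t+4^n\mid\cF_t)\ge 1/2$, where $\tilde\tau_{\text{hit}}$ is the hitting time of $\tilde W$'s post-$s_1$ history); and the fact that the increments of $W$ and $\tilde W$ agree on $(\zeta_i,\zeta_i')$ is a deterministic statement about two functions of the same input. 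No marginals need checking because nothing is re-sampled. Your density argument for simultaneous goodness is replaced by simply building both hitting times into the definition of $\zeta_i$; Theorem~\ref{thm:rel} applied to the single filtration then shows $\zeta_i-\zeta_{i-1}'\le t_n^{2\epsilon}$ on the high-probability event. The paper bounds the number of breaks crudely by $n^5$ (via independent $1/2$-coin flips) rather than your $O(n^2)$, but either is absorbed by $t_n^{3\epsilon}/t_n^{2\epsilon}=t_n^\epsilon$.

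In short: your outline is correct and matches the paper's idea, but you have introduced a re-sampling layer that the paper avoids, and with it the measure-theoretic bookkeeping you flagged. Dropping the fresh driving walks and working with the single $X$ throughout makes the argument both shorter and rigorous without further work.
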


\begin{proof}
In order to couple $W$ and $\tilde{W}$ we simply drive $W$ and $\tilde{W}$ using the same continuous time walk. According to this coupling we have $W(s)=W_1(s)$ for all $s\le s_1$. The coupling between the two walks deteriorate when they are forced by different histories. Every time that happens, the idea would be to use Theorem~\ref{thm:rel} in order to quickly find a time that is relaxed with respect to both, $W$ and $\Tilde{W}$. From this time onward, there is a positive chance that the two walks will be coupled perfectly for a long time.  

For convenience we extend $W$ and $W_2$ to be infinite regenerated walks (so that $W$ and $\tilde{W}$ are defined for all $t>0$). Next, recall the definition of $\tau _{\text{hit}}(t)$ in \eqref{eq:hit}. We similarly define $\tilde{\tau }_{\text{hit}} (t)$ to be the first time $\tilde{W}$ hits the relevant history after the concatenation time $s_1$. That is, for $t>s_1$ we let 
\begin{equation}
    \tilde{\tau } _{\text{hit}}(t) :=\inf \big\{ s>t : \exists s'\in s+\beta \mathbb Z, \ \alpha (t)\vee s_1 \le s'<t \text{ such that } \ \| W(s)-W(s') \| \le 1  \big\}.
\end{equation}
Next, let $\Tilde{\mathcal T }$ be the set of times of the form $s_1+s$ where $s$ is a relaxed time with respect to $W_2$. By Lemma~\ref{lem:relaxed4} we have that 
\begin{equation}\label{eq:tau tilde}
\mathbb P \big( \tilde{\tau } _{\text{hit}}(t) \ge t+4^n \mid \mathcal F _t \big) \ge 1-\beta ^{-c} \quad \text{for all} \quad  t\in  \Tilde{\mathcal T}.  
\end{equation}
Next, we define a sequence of stopping times $\zeta _1,\zeta _2 \dots $ and $\zeta _0', \zeta _1', \zeta _2 '\dots $ in the following way. We let $\zeta _0':=s_1$. For all $i\ge 1$ we let
\begin{equation}
    \zeta _i:=\inf \big\{ t>\zeta _{i-1}' : t\in \mathcal T \cap \Tilde{\mathcal T}   \big\} \quad \text{and} \quad \zeta _i': = \tau _{\text{hit}}(\zeta _i) \wedge \tilde{\tau} _{\text{hit}}(\zeta _i).
\end{equation}
By \eqref{eq:tau tilde} and the analogous statement without the tilde we have that
$$\mathbb P \big( \zeta_{i}'\geq \zeta_i+4^n \mid \mathcal F _{\zeta _i} \big) \ge 1/2 \quad \text{for all} \quad i\ge 1.$$
As before, the event $\zeta_{i}'\geq \zeta_i+4^n$ cannot happen more than $4$ times before $t_n\le 4^{n+1}$ and therefore
\begin{equation}\label{eq:n^4}
    \mathbb P \big( |\{i: \zeta _i< t_n\}|\le n^3 \big) \ge \mathbb P \big( \text{Bin}(n^3,1/2) \ge 5 \big)\ge 1-e^{-cn^3}.
\end{equation}
Moreover, $W$ and $\tilde{W}$ are coupled perfectly in the intervals $(\zeta _i, \zeta _i')$. Indeed, in this interval the walks are only ``forced" by the history after time $\zeta _i$ which is identical for $W$ and $\tilde{W}$. Thus, for all $s\le t_n$ we can write
\begin{equation}
    W(s)-\tilde{W}(s)=\sum _{i=1}^{\infty } \big( W(s\wedge \zeta _i)-W(s\wedge \zeta _{i-1}') \big) - \big( \tilde{W}(s\wedge \zeta _i)-\tilde{W}(s\wedge \zeta _{i-1}') \big).
\end{equation}
Let $\mathcal A$ be the event that for all $s_1\le t\le t_n-t_n^\epsilon $ we have 
\begin{equation}
    \big| \mathcal T \cap [t,t+t_n^\epsilon ] \big| \ge \big( 0.9+2/n \big) t_n^\epsilon \quad \text{and} \quad  \big| \tilde{\mathcal T} \cap [t,t+t_n^\epsilon ] \big| \ge \big( 0.9+2/n \big) t_n^\epsilon. 
\end{equation}
By Theorem~\ref{thm:rel} applied twice for $W$ and $W_2$ we obtain that $\mathbb P (\mathcal A )\ge 1-e^{-cn^3}$. On the event $\mathcal A$ we have for all $i\ge 1 $ that $s\wedge \zeta _i-s\wedge \zeta _{i-1}' \le t_n^{\epsilon }$. Finally, let $\mathcal B$ be the event that $\tau _{\rm fast}(n^3)\ge t_n$ and $\tilde{\tau} _{\rm fast}(n^3)\ge t_n$, where $\tilde{\tau} _{\rm fast}(n^3)$ is defined by \eqref{eq:tau fast} for the walk $\tilde{W}$. By Claim~\ref{claim:fast} applied for the walks $W,W_1$ and $W_2$ we have that $\mathbb P (\mathcal B )\ge 1-e^{-cn^3}$.

On the event $\mathcal A \cap \mathcal B $ and the event in \eqref{eq:n^4} we have that
\begin{equation}
    \|W(s)-\tilde{W}(s)\| \le 2n^3 \sum _{i=1}^{n^3} s\wedge \zeta _{i}- s\wedge \zeta _{i-1} ' \le 2n^{6}t_n^{\epsilon } \le  t_n^{2\epsilon }, 
\end{equation}
which completes the proof of the lemma.
\end{proof}

The next Corollary follows immediately by induction.
\begin{cor}\label{cor:conca}
Let $s_1,\dots ,s_\ell >0$ such that $s_1+\cdots +s_\ell\le t_n$. Let $W_1,\dots ,W_\ell$ be independent regenerated cyclic walks running for times $s_1,\dots ,s_\ell$ respectively. Let $\tilde{W}$ be the concatenation of the walks $W_1,\dots ,W_\ell$. Then, in the natural coupling of $W$ and $\tilde{W}$ we have 
\begin{equation}
\mathbb P \Big( \sup _{s\le s_1+\cdots +s_\ell} \big\| W(s)-\tilde{W}(s) \big\| \ge \ell t_n^{2\epsilon} \Big) \le e^{-cn^3}.
\end{equation}
\end{cor}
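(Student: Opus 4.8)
The plan is to iterate Lemma~\ref{lem:conca} over the $\ell$ walks, peeling off one walk at a time from the left. Let me set up the telescoping. Write $S_0=0$ and $S_m=s_1+\cdots+s_m$ for $1\le m\le \ell$, so that $\tilde W$ is the concatenation of $W_1,\dots,W_\ell$ and, for each $m$, the concatenation of $W_1,\dots,W_m$ together with a single regenerated walk $V_m$ of length $t_n - S_m$ (chosen independent of everything else) is again a regenerated walk running for time $t_n$; call it $\tilde W^{(m)}$. By convention $\tilde W^{(0)}=W$ is the original regenerated walk and $\tilde W^{(\ell)}$ agrees with $\tilde W$ on $[0,S_\ell]$. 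The idea is that $\tilde W^{(m)}$ and $\tilde W^{(m+1)}$ differ only by splitting the single walk $V_m$ (of length $t_n-S_m$) at its first $s_{m+1}$ units of time into $W_{m+1}$ followed by a fresh regenerated walk — which is exactly the setup of Lemma~\ref{lem:conca} applied inside a regenerated walk of length $t_n-S_m\le t_n$. Hence, in the natural coupling,
\begin{equation}
\mathbb P\Big(\max_{s\le S_{m+1}} \big\|\tilde W^{(m)}(s)-\tilde W^{(m+1)}(s)\big\| \ge t_n^{3\epsilon}\Big) \le 2e^{-8n^2},
\end{equation}
and on the complement the two walks in fact agree on all of $[0,S_m]$ and differ by at most $t_n^{3\epsilon}$ on $[0,S_{m+1}]$.

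First I would make precise the claim that all the intermediate objects $\tilde W^{(m)}$ are genuine regenerated cyclic walks of length $t_n$: this follows because the concatenation of independent regenerated walks, each stopped at its own regeneration structure, is itself distributed as a regenerated walk (the defining recursion \eqref{eq:regen.walk.defn} is exactly a concatenation of i.i.d.\ pieces), so inserting $W_{m+1}$ and then a fresh regenerated tail reproduces the correct law. Next, I would couple all $\ell+1$ walks simultaneously by driving them with the same underlying simple continuous-time walk on each common stretch, as in the proof of Lemma~\ref{lem:conca}; this is the ``natural coupling'' in the statement and it is consistent across $m$ because each step only splits a piece that has not yet been reached.

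Then the proof is a union bound and a triangle inequality. On the intersection of the $\ell$ good events above — which has probability at least $1-2\ell e^{-8n^2}$ — we have $\|\tilde W^{(m)}(s)-\tilde W^{(m+1)}(s)\|\le t_n^{3\epsilon}$ for $s\le S_{m+1}$ and the two agree for $s\le S_m$. Summing over $m=0,\dots,\ell-1$ and using the triangle inequality gives
\begin{equation}
\sup_{s\le S_\ell}\big\|W(s)-\tilde W(s)\big\| = \sup_{s\le S_\ell}\big\|\tilde W^{(0)}(s)-\tilde W^{(\ell)}(s)\big\| \le \sum_{m=0}^{\ell-1}\sup_{s\le S_\ell}\big\|\tilde W^{(m)}(s)-\tilde W^{(m+1)}(s)\big\| \le \ell\, t_n^{3\epsilon},
\end{equation}
where each summand is controlled because on the good event $\tilde W^{(m)}$ and $\tilde W^{(m+1)}$ coincide past time $S_{m+1}$ up to the bound $t_n^{3\epsilon}$ and coincide exactly before $S_m$, so their discrepancy on the whole interval $[0,S_\ell]$ is at most $t_n^{3\epsilon}$. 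This is precisely the asserted bound.

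The only real point requiring care — and the step I would expect to be the main obstacle — is verifying that the coupling of Lemma~\ref{lem:conca} can be applied \emph{inside} a regenerated walk of length $t_n - S_m$ started at a shifted space-time location, and that these $\ell$ applications can be made on a common probability space consistently (so that the errors genuinely add rather than interact). Once one checks that each intermediate walk $\tilde W^{(m)}$ really is a length-$t_n$ regenerated walk and that the driving-walk coupling is coherent, the rest is the routine telescoping and union bound above; this is why the corollary is stated as following ``immediately by induction.''
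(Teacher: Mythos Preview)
Your telescoping is correct and is exactly the induction the paper has in mind. One small correction: the intermediate walks $\tilde W^{(m)}$ are \emph{not} themselves regenerated walks --- gluing independent regenerated walks at fixed deterministic times $S_m$ does not reproduce the law in~\eqref{eq:regen.walk.defn}, which glues cyclic walks at their random closing times --- but you don't need this claim: as you already note, Lemma~\ref{lem:conca} is applied to $V_m$, and $V_m$ \emph{is} a genuine regenerated walk by construction (extend it to length $t_n$ if you like), which is all the lemma requires, so the telescoping and union bound go through as written.
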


\subsection{Traveling far is unlikely}
In this section we concatenate independent walks and use concentration results for sum of independent variables to say that the concatenated walk cannot travel far. Throughout this subsection we fix $\sqrt{t_n} \le t\le t_n$ and bound the fluctuations of the regenerated walk up to time $t$. We do this in three steps, first we use Corollary~\ref{cor:far} to say that large deviations are unlikely in the scale of $t^{1/3}$. Then, we use this to show that intermediate deviations are unlikely in the scale of $t^{2/3}$. Finally, we use this to show that small deviations are unlikely in the scale of $t$. 

\begin{lem}\label{lem:2/3}
For all $\sqrt{t_n} \le t\le t_n$ we have that 
\begin{equation}
    \mathbb P \Big( \max _{s\le t^{2/3}} \|W(s)\| \ge t^{2/5} \Big) \le e^{-cn^3}. 
\end{equation}
\end{lem}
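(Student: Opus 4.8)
The plan is to realise $W$ on $[0,t_n^{2/3}]$ as an approximate concatenation of $m:=\lceil t_n^{1/3}\rceil$ independent regenerated cyclic walks $W_1,\dots,W_m$, each of length $s:=t_n^{2/3}/m\asymp t_n^{1/3}$, and to exploit the cancellation between the pieces rather than the triangle inequality. Let $\tilde W$ be the concatenation of $W_1,\dots,W_m$; by Corollary~\ref{cor:conca} (with $\ell=m$), in the natural coupling $\sup_{u\le t_n^{2/3}}\|W(u)-\tilde W(u)\|<mt_n^{3\epsilon}\le\tfrac14 t_n^{2/5}$ outside an event of probability $2me^{-8n^2}$, using $1/3+3\epsilon<2/5$ (valid since $\epsilon=1/(200d)$). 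Writing $Y_i:=W_i(s)$ for the displacement of the $i$-th piece and $S_j:=Y_1+\cdots+Y_j$, if $u$ lies in the $j$-th block then $\|\tilde W(u)\|\le\|S_{j-1}\|+\sup_{v\le s}\|W_j(v)\|$, so it suffices to bound $\Lambda:=\max_{j\le m}\sup_{v\le s}\|W_j(v)\|$ and $\max_{0\le j\le m}\|S_j\|$, each by $\tfrac14 t_n^{2/5}$ with the required probability. The term $\Lambda$ is easy: apply Lemma~\ref{cor:far} to each $W_i$ with $L$ chosen so that $10dLs=\tfrac14 t_n^{2/5}$ (legitimate, since $L=t_n^{2/5}/(40ds)\ge1$ for $n$ large), giving $\mathbb P(\sup_{v\le s}\|W_i(v)\|\ge\tfrac14 t_n^{2/5})\le Ce^{-Ls}=Ce^{-t_n^{2/5}/(40d)}$; a union bound over the $m$ pieces leaves failure probability $Cme^{-t_n^{2/5}/(40d)}$, far below $e^{-7n^2}$ since $t_n^{2/5}$ grows exponentially in $n$ while $m$ does not.

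\textbf{The sum $\max_j\|S_j\|$.} This is the heart of the matter. A naive union bound over the pieces using the inductive ``traveling far'' hypothesis at the level $n'\approx n/3$ of a piece is hopeless: it only yields a failure probability of order $e^{-4n'^2}\asymp e^{-4n^2/9}$, which is weaker than $e^{-7n^2}$. Instead one must use that the $Y_i$ are independent and mean zero (by \eqref{eq:mean.covar}) through a truncated martingale (Bernstein/Freedman) estimate. Put $m':=\lceil\log_4 s\rceil<n$ and $R_0:=2m'^2\sqrt s\le C(\log s)^2\sqrt s$; the ``traveling far'' hypothesis at level $m'$ gives $\mathbb P(\|Y_i\|\ge R_0)\le e^{-4m'^2}\le s^{-100}$ (for $n$ large), and the same input—integrated, with the trivial bound below $R_0$ and the Lemma~\ref{cor:far} tail above $10ds$—yields $\mathbb E\|Y_i\|^2\le R_0^2+(10ds)^2e^{-4m'^2}+C\le C(\log s)^4 s$. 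This near-diffusive second-moment bound is the one genuinely inductive ingredient here: Lemma~\ref{cor:far} by itself gives only the useless estimate $\mathbb E\|Y_i\|^2\le Cs^2$. Set $\bar Y_i:=Y_i\mathbf1\{\|Y_i\|\le R_0\}-\mathbb E[Y_i\mathbf1\{\|Y_i\|\le R_0\}]$, so the $\bar Y_i$ are independent, mean zero, bounded by $2R_0$, with $\sum_i\operatorname{Var}\langle\bar Y_i,e\rangle\le\sum_i\mathbb E\|Y_i\|^2\le C(\log t_n)^4 t_n^{2/3}=:V$ (the product $ms=t_n^{2/3}$ makes $V$ insensitive to the choice of $m$). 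Applying Freedman's inequality coordinatewise and summing over the $d$ coordinates,
\[
\mathbb P\Big(\max_{j\le m}\big\|{\textstyle\sum_{i\le j}\bar Y_i}\big\|\ge\tfrac14 t_n^{2/5}\Big)\le 2d\exp\Big(-\frac{c\,t_n^{4/5}}{V+R_0t_n^{2/5}}\Big)\le 2d\exp\big(-c\,t_n^{2/15}/(\log t_n)^4\big)\le e^{-7n^2}
\]
for $n$ large, the positivity of the exponent being exactly the inequality $1/3<2/5$. Finally, on $\{\|Y_i\|\le R_0\ \forall i\}$ (probability $\ge1-ms^{-100}$) one has $S_j-\sum_{i\le j}\bar Y_i=-\sum_{i\le j}\mathbb E[Y_i\mathbf1\{\|Y_i\|>R_0\}]$, whose norm is at most $m\sqrt{\mathbb E\|Y_1\|^2}\,s^{-50}\to0$; hence $\max_j\|S_j\|<\tfrac14 t_n^{2/5}$ on the intersection of these events.

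\textbf{Conclusion and the main obstacle.} Collecting the error terms, the failure probability is at most $2me^{-8n^2}+Cme^{-ct_n^{2/5}}+2de^{-ct_n^{2/15}/(\log t_n)^4}+ms^{-100}$; the first dominates, and since $m\le2t_n^{1/3}\le2\cdot4^{(n+1)/3}$ is sub-exponential in $n^2$ we get $2me^{-8n^2}\le e^{-7n^2}$ while the remaining terms are $o(e^{-8n^2})$. On the complementary event $\max_{u\le t_n^{2/3}}\|W(u)\|\le\max_j\|S_j\|+\Lambda+mt_n^{3\epsilon}<3\cdot\tfrac14 t_n^{2/5}<t_n^{2/5}$, which is the claim. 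The main obstacle is precisely the bound on $\max_j\|S_j\|$: one must realise that a per-piece union bound (which only exploits the triangle inequality) is too lossy, extract the near-diffusive second-moment bound $\mathbb E\|Y_i\|^2\lesssim s\cdot\mathrm{polylog}$ from the inductive hypothesis (it is not one of the five hypotheses verbatim), and then run the truncation-plus-Bernstein argument, whose large-deviation exponent $t_n^{2/15}$ is positive only because the time scale $t_n^{2/3}$ and the space scale $t_n^{2/5}$ leave the gap $2/5-1/3=1/15>0$.
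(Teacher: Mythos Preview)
Your proof is correct and follows essentially the same route as the paper: split $[0,t_n^{2/3}]$ into $\asymp t_n^{1/3}$ independent regenerated pieces of length $\asymp t_n^{1/3}$, couple via Corollary~\ref{cor:conca}, and control the partial sums of the endpoints by a truncated Freedman/Bernstein inequality. The only substantive difference is the source of the second-moment bound on a single piece: the paper invokes the inductive \emph{Brownian approximation} (hypothesis (4)) to get $\E\|X_i\|^2\le 6d\,t_n^{1/3}$, whereas you integrate the inductive \emph{traveling far} tail (hypothesis (3)) together with Lemma~\ref{cor:far} to get $\E\|Y_i\|^2\le C(\log t_n)^4 t_n^{1/3}$. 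Your truncation level $R_0\asymp t_n^{1/6}$ is also tighter than the paper's $n^5 t_n^{1/3}$, which yields the slightly better Freedman exponent $t_n^{2/15}$ in place of the paper's $t_n^{1/15}$, but this plays no role for the stated bound. (Incidentally, by the lattice symmetries the truncated variable $Y_i\mathbf 1\{\|Y_i\|\le R_0\}$ already has mean zero, so your re-centering is harmless but unnecessary.)
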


\begin{proof}
For clarity we ignore the floor notations in this proof and assume that $t^{1/3}$ is an integer. Let $k= t^{1/3} $ and let $W_1,\dots ,W_k$ be independent regenerated walks running for time $t^{1/3}$. Let $\tilde{W}$ be the concatenation of $W_1,\dots ,W_k$. By Corollary~\ref{cor:conca} there is a coupling of $W$ and $\tilde{W}$ such that 
\begin{equation}\label{eq:conca2}
   \mathbb P \Big( \max _{s\le t^{2/3}} \|W(s)-\tilde{W}(s)\| \ge t^{1/3+4\epsilon } \Big) \le e^{-cn^3}.
\end{equation}
It is thus suffices to show that $\tilde{W}$ do not travel far. To this end, define the events $\mathcal A _i :=\big\{ \sup_{s\le t^{1/3}} \|W_i(s)\|\le n^3 t^{1/3} \big\}$ and the random variables $X_i:=\mathds 1_ {\mathcal A _i}W_i(t^{1/3})$.
By Corollary~\ref{cor:far} we have that $\mathbb P (\mathcal A_i) \ge 1- e^{-cn^3}$.

Next, we claim that $\mathbb E [\|X_i\|^2] \le 2dt^{1/3}$. Let $\mathcal B _i$ be the event that the inductive coupling of $W_i$ with its Brownian motion $B_i$ was successful. That is 
\begin{equation}
    \mathcal B _i: = \Big\{ \max _{s\le t^{1/3}} | W_i(s)-\sigma B_i(s)| \ge t^{2/15} \Big\}.
\end{equation}
where $\sigma ^2 \le 3$ and $B_i$ are from the inductive assumption. Using the inductive hypothesis for time $t^{1/3}$ we have that $\mathbb P (\mathcal B _i) \ge 1- e^{-cn^2}$. Hence, 
\begin{align*}
    \mathbb E [\|X_i\|^2] &\le \mathbb E \big[ \|X_i\|^2 \mathds 1 _{\mathcal B_i} \big] + (n^5 t^{1/3})^2 \cdot  \mathbb P (\mathcal B _i^c ) \\
    &\le 2\mathbb E \big[ \|\sigma B_i(t^{1/3})\|^2  \big] + 2\mathbb E \big[ \|W_i(t^{1/3})-\sigma B_i(t^{1/3})\|^2 \mathds 1_{\mathcal A_i \cap \mathcal B_i} \big] +Ce^{-cn^2} \le 6dt^{1/3}.
\end{align*}
By symmetry $\E X_i =0$ so we can now use Freedman's inequality \cite[Theorem~18]{chung2006concentration} 
to obtain
\begin{equation}
    \mathbb P \Big( \max _{j\le k } \Big\| \sum _{i=1}^j X_j \Big\| \ge t ^{2/5}/3   \Big) \leq 2d\exp \Big( \frac{-t^{4/5}/18}{ 6 d t^{2/3} + n^{5}t^{1/3}t^{2/5}/9 } \Big) \le e^{-cn^3}.
\end{equation}
Note that the Theorem~18 in \cite{chung2006concentration} deals with the one dimensional case. We therefore need to apply the theorem separately to each coordinate and then union bound over the coordinates.
Using that $\mathbb P \big( \bigcap_{i=1}^{k}\mathcal A _i \big) \ge 1- e^{-cn^3}$ and that on this event $W_i(t^{1/3})=X_i$ we obtain
\begin{equation}\label{eq:tilde W}
     \mathbb P \Big(  \max _{s\le t^{2/3}} \|\tilde{W}(s)\| \ge t^{2/5}/2  \Big) \le e^{-cn^3}.
\end{equation}
The lemma follows from \eqref{eq:conca2} and \eqref{eq:tilde W}.
\end{proof}

\begin{lem}\label{l:too.far}
For all $\sqrt{t_n} \le t \le t_n$ we have that 
\begin{equation}
    \mathbb P \Big( \max _{s\le t} \|W(s)\| \ge n^2 \sqrt{t} \Big) \le e^{-cn^3}. 
\end{equation}
\end{lem}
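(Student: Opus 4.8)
\textbf{Proof plan for Lemma~\ref{l:too.far}.} The plan is to bootstrap from Lemma~\ref{lem:2/3} using exactly the concatenation-plus-concentration scheme of that proof, now splitting $[0,t_n]$ into $k:=t_n^{1/3}$ blocks of length $t_n^{2/3}$ (rather than blocks of length $t_n^{1/3}$). Let $W_1,\dots,W_k$ be independent regenerated walks of length $t_n^{2/3}$ and let $\tilde W$ be their concatenation. By Corollary~\ref{cor:conca}, in the natural coupling of $W$ and $\tilde W$ one has $\max_{s\le t_n}\|W(s)-\tilde W(s)\|\le k\,t_n^{3\epsilon}=t_n^{1/3+3\epsilon}$ off an event of probability at most $2k e^{-8n^2}\le e^{-7n^2}$ (using $k=t_n^{1/3}=e^{O(n)}$ and $\epsilon=1/(200d)$). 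Since $t_n^{1/3+3\epsilon}\ll\sqrt{t_n}$, it suffices to control $\max_{s\le t_n}\|\tilde W(s)\|$.

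To bound $\tilde W$, set $\mathcal A_i:=\{\sup_{s\le t_n^{2/3}}\|W_i(s)\|\le t_n^{2/5}\}$. Since $W_i$ agrees in law with the restriction to $[0,t_n^{2/3}]$ of a length-$t_n$ regenerated walk, Lemma~\ref{lem:2/3} gives $\mathbb P(\mathcal A_i^c)\le e^{-7n^2}$, hence $\mathbb P(\bigcup_i\mathcal A_i^c)\le k e^{-7n^2}\le e^{-6.5n^2}$. On $\bigcap_i\mathcal A_i$ each block displaces the walk by at most $t_n^{2/5}$, so $\max_{s\le t_n}\|\tilde W(s)\|\le \max_{j\le k}\|\sum_{i=1}^j X_i\|+t_n^{2/5}$, where $X_i:=\mathds 1_{\mathcal A_i}W_i(t_n^{2/3})$. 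The $X_i$ are independent, satisfy $\|X_i\|\le t_n^{2/5}$, and have $\mathbb E X_i=0$ by the reflection symmetry of $\Z^d$ (the event $\mathcal A_i$ is invariant under sign changes of coordinates). For the second moment we use the inductive Brownian approximation at time $t_n^{2/3}$ exactly as in Lemma~\ref{lem:2/3}: letting $\mathcal B_i$ be the event that the coupling of $W_i$ with $\sigma B_i$ succeeds (so $\mathbb P(\mathcal B_i^c)\le e^{-cn^2}$), one obtains
\begin{equation}
\mathbb E\|X_i\|^2 \le 2\mathbb E\|\sigma B_i(t_n^{2/3})\|^2 + 2\mathbb E\big[\|W_i(t_n^{2/3})-\sigma B_i(t_n^{2/3})\|^2\mathds 1_{\mathcal A_i\cap\mathcal B_i}\big] + t_n^{4/5}\,\mathbb P(\mathcal B_i^c) \le Cd\,t_n^{2/3},
\end{equation}
and therefore $\sum_{i=1}^k\mathbb E\|X_i\|^2\le Cd\,t_n$.

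Now apply Freedman's inequality \cite[Theorem~18]{chung2006concentration} coordinatewise (union bounding over the $d$ coordinates and the two signs, as in Lemma~\ref{lem:2/3}), with deviation of order $n^2\sqrt{t_n}$, summed conditional variance $Cd\,t_n$, and increment bound $t_n^{2/5}$:
\begin{equation}
\mathbb P\Big(\max_{j\le k}\Big\|\sum_{i=1}^j X_i\Big\|\ge \tfrac14 n^2\sqrt{t_n}\Big)\le 2d\exp\Big(-\frac{c\,n^4 t_n}{d\,t_n + n^2 t_n^{9/10}}\Big)\le e^{-cn^4}\le e^{-6n^2}
\end{equation}
for $n$ large, since $n^2 t_n^{9/10}=o(t_n)$ so the denominator is $\asymp d\,t_n$. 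Combining the three displayed bounds and the elementary estimate $t_n^{2/5}+t_n^{1/3+3\epsilon}\le \tfrac14 n^2\sqrt{t_n}$ (valid for $n$ large, so $\max_{s\le t_n}\|W(s)\|\le \max_{j\le k}\|\sum_{i\le j}X_i\|+\tfrac12 n^2\sqrt{t_n}< n^2\sqrt{t_n}$ on the good event) yields the claim. The main obstacle here is not conceptual — the step is a routine iteration of the already-established template; the only points requiring care are verifying that both the concatenation error $t_n^{1/3+3\epsilon}$ and the single-block displacement $t_n^{2/5}$ are $\ll\sqrt{t_n}$, and the second-moment estimate $\mathbb E\|X_i\|^2=O(t_n^{2/3})$, which is extracted from the inductive Brownian coupling just as in Lemma~\ref{lem:2/3}.
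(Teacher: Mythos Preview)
Your proposal is correct and follows essentially the same approach as the paper: both split $[0,t_n]$ into $t_n^{1/3}$ blocks of length $t_n^{2/3}$, couple via Corollary~\ref{cor:conca}, truncate with $\mathcal A_i$ using Lemma~\ref{lem:2/3}, bound $\mathbb E\|X_i\|^2$ through the inductive Brownian approximation, and finish with Freedman's inequality applied coordinatewise. The only differences are cosmetic (your constants and the way you package the union bound over $\mathcal A_i^c$), and your remark that $W_i$ has the law of the restriction of a length-$t_n$ walk is a nice explicit justification the paper leaves implicit.
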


\begin{proof}
The proof is almost identical to the proof of Lemma~\ref{lem:2/3} and some of the details are omitted. Let $k:=t^{1/3}$ and let $W_1,\dots ,W_k$ be independent regenerated walks running for time $t^{2/3}$. Let $\tilde{W}$ be the concatenation of $W_1,\dots ,W_k$. By Corollary~\ref{cor:conca} we have
\begin{equation}\label{eq:conca33}
   \mathbb P \Big( \max _{s\le t} \|W(s)-\tilde{W}(s)\| \ge t^{1/3+4\epsilon } \Big) \le  e^{-cn^3}.
\end{equation}

Next, we similarly define $\mathcal A _i := \big\{\sup_{s\le t^{2/3}} \|W_i(s)\| \le t^{2/5} \big\}$ and $X_i:=\mathds 1 _{\mathcal A_i} W(t^{2/3})$. By Lemma~\ref{lem:2/3} we have that $\mathbb P (\mathcal A _i)\ge 1-e^{-cn^3}$. Using the same arguments we have that $\mathbb E [\|X_i\|^2] \le 2dt^{2/3}$ and therefore by Freedman's inequality
\begin{equation}
    \mathbb P \Big( \max _{j\le k } \Big\| \sum _{i=1}^j X_j \Big\| \ge n^2\sqrt{t}/3   \Big) \leq 2d\exp \Big( \frac{-n^4 t/18}{ 2 d t + t^{2/5}n^2\sqrt{t}/9 } \Big) \le e^{-cn^3}.
\end{equation}
This finishes the proof of the lemma using the same arguments as before.
\end{proof}

\subsection{Coupling with a Brownian motion}\label{sec:KMT}

To couple the regenerated walk with Brownian motion we make use of Theorem~1.3 due to Zaitsev \cite{zaitsev1998multidimensional} that generalizes the classical KMT~\cite{komlos1975approximation} result to higher dimensions. We present a special case of the Theorem. 

\begin{thm}[Zaitsev]\label{thm:KMT}
Let $X_1,\dots ,X_\ell$ be IID random variables in $\mathbb R ^d$ and let $S_j:=\sum _{i=1}^j X_i$. Suppose that
\begin{equation}
\|X_i\| \le M \quad \text{and} \quad  {\rm Cov}(X_i)=I. 
\end{equation}
Then, there is a coupling of the variables $X_1,\dots ,X_\ell$ and a Brownian motion $B_t$ such that 
\begin{equation}
    \mathbb P \Big( \max _{j\le \ell} \big\|S_j -B(j)\big\|  \ge M\log ^4 \ell \Big) \le \exp \big( -c_d \log ^4 \ell \big).
\end{equation}
The constant $C_d$ depends on the dimension $d$ but not on the distribution of the $X_i$.
\end{thm}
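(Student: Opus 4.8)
The plan is to obtain this statement as a direct specialisation of Zaitsev's multidimensional strong approximation theorem \cite[Theorem~1.3]{zaitsev1998multidimensional}. We use it in the following form: there are dimensional constants $C_d,c_d>0$ such that for any independent mean-zero random vectors $X_1,\dots,X_\ell$ in $\mathbb R^d$ with $\|X_i\|\le M$ almost surely, one can construct on a common probability space the partial sums $S_j=\sum_{i\le j}X_i$ together with independent Gaussian vectors $Y_1,\dots,Y_\ell$ with $\mathbb E Y_i=0$ and $\mathrm{Cov}(Y_i)=\mathrm{Cov}(X_i)$ so that, writing $T_j:=\sum_{i\le j}Y_i$,
\[
\mathbb P\Big(\max_{j\le\ell}\|S_j-T_j\|\ge x\Big)\le C_d\exp\!\big(-c_d x/M\big)\qquad\text{for all }x\ge C_d M\log\ell .
\]
First I would apply this to our i.i.d.\ vectors, invoking the hypotheses $\|X_i\|\le M$ and $\mathrm{Cov}(X_i)=I$. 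Since the common covariance is the identity, the Gaussian random walk $(T_j)_{j\le\ell}$ has i.i.d.\ $N(0,I)$ increments, hence is equal in law to $(B(j))_{j\le\ell}$ for a standard $d$-dimensional Brownian motion $B$; enlarging the probability space to carry all of $B$ we may therefore take $T_j=B(j)$. The fact that $C_d,c_d$ do not depend on the law of the $X_i$ is inherited verbatim from Zaitsev's theorem.

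It then remains only to rewrite the tail bound in the stated shape. Take $x=M\log^4\ell$; this is an admissible value provided $\log^3\ell\ge C_d$, and the asserted bound is anyway understood for $\ell$ larger than a dimensional constant (which covers every application in this paper). For such $\ell$,
\[
\mathbb P\Big(\max_{j\le\ell}\|S_j-B(j)\|\ge M\log^4\ell\Big)\le C_d\exp\!\big(-c_d\log^4\ell\big)\le \exp\!\big(-\tfrac{c_d}{2}\log^4\ell\big),
\]
the last inequality holding once $\log^4\ell\ge 2c_d^{-1}\log C_d$. Renaming $c_d/2$ as $c_d$ yields exactly the claimed inequality.

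There is no serious mathematical obstacle here: the statement is a packaging of an existing theorem, and the only points needing care are bookkeeping ones — verifying that our $X_i$ meet Zaitsev's hypotheses (independent, uniformly bounded, with the prescribed common covariance), checking that the conclusion applies in the regime $x\asymp M\log^4\ell$ rather than only for small $x$ (it does; indeed $\|S_j-T_j\|\le\ell M$ deterministically, far above our threshold, and the bound is valid throughout that range), and converting the Gaussian-random-walk conclusion into a genuine Brownian-motion coupling through the identity-covariance normalisation. The one essential feature that cannot be reduced to the classical one-dimensional KMT theorem \cite{komlos1975approximation} is that Zaitsev's result is intrinsically multidimensional, so — unlike the applications of Freedman's inequality elsewhere in the paper — no coordinatewise splitting is available or needed.
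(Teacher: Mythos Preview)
Your proposal is correct and matches the paper's treatment: the paper does not prove this theorem but simply cites it as a special case of Zaitsev's Theorem~1.3, exactly as you do. Your explicit verification that the identity-covariance hypothesis turns the Gaussian random walk into Brownian motion sampled at integer times, and your bookkeeping on the admissible range of $x$, are more detailed than what the paper writes, but the approach is identical.
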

Using this embedding theorem we will establish the coupling between $W(t)$ and Brownian motion.
\begin{lem}\label{cor:kmt}
There is a constant $\sigma =\sigma (t_n)$ with 
\begin{equation}\label{eq:sigma}
    |\sigma -\sqrt{2} |\le \max\{3\beta ^{-1/16} -t_n^{-1/16},0\}
\end{equation}
such that the following holds. There is a coupling of $W$ and a Brownian motion $B(t)$ such that 
\begin{equation}
    \mathbb P \Big( \max _{s\le t_n} \big\| W(s)-\sigma B(s) \big\| \ge t_n^{2/5} \Big) \le e^{-cn^3}.
\end{equation}
\end{lem}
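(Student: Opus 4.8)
The plan is to reduce the statement to Zaitsev's multidimensional embedding (Theorem~\ref{thm:KMT}) by cutting $[0,t_n]$ into blocks of an intermediate length $s:=t_n^{3/4}$ and concatenating. Set $\ell:=t_n/s=t_n^{1/4}$ (ignoring floors, as is done elsewhere), let $W_1,\dots,W_\ell$ be i.i.d.\ regenerated cyclic walks of length $s$, and let $\tilde W$ be their concatenation. By Corollary~\ref{cor:conca}, in the natural coupling $\sup_{u\le t_n}\|W(u)-\tilde W(u)\|\le \ell t_n^{3\epsilon}=t_n^{1/4+3\epsilon}\ll t_n^{2/5}$ off an event of probability at most $2\ell e^{-8n^2}$, so it suffices to couple $\tilde W$ with a Brownian motion. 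When $t_n<\beta$ the walk $W$ is an honest simple continuous-time random walk on $\Z^d$, so $\mathrm{Cov}(W(u))=2uI$ exactly; there I would take $\sigma=\sqrt2$ (making the right side of~\eqref{eq:sigma} a valid bound for $0=|\sigma-\sqrt2|$) and apply Theorem~\ref{thm:KMT} directly to the truncated unit-time increments of $W$. So assume $t_n\ge\beta$ from now on, in particular $n\ge\log_4(\beta/t_0)$.

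The increments $X_i:=W_i(s)$ are i.i.d., mean zero, and by the symmetries of $\Z^d$ have covariance $\theta_s I$ for a finite $\theta_s$; Lemma~\ref{cor:far} gives $\mathbb P(\|X_i\|\ge r)\le Ce^{-r/(10d)}$ for $r\ge 10ds$, hence all moments of $\|X_i\|$ are bounded by powers of $s$. I would truncate at level $T:=10ds$, set $\hat X_i:=X_i\mathds 1\{\|X_i\|\le T\}$ with covariance $\theta_s^{\mathrm{tr}}I$ (still diagonal, by symmetry), and $Y_i:=\hat X_i/\sqrt{\theta_s^{\mathrm{tr}}}$, so $\mathrm{Cov}(Y_i)=I$ and $\|Y_i\|\le M:=T/\sqrt{\theta_s^{\mathrm{tr}}}$. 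The inductive coupling at scale $s$ (hypothesis~(4) at the level $n_s$ of $s$, with constant $\sigma(s)$) together with a routine second-moment computation using hypothesis~(3) gives $|\theta_s-\sigma(s)^2s|\le Cn_s^2 s^{9/10}$, and Lemma~\ref{cor:far} makes $|\theta_s^{\mathrm{tr}}-\theta_s|$ exponentially small in $s$; since $\sigma(s)\ge1$ this yields $\theta_s^{\mathrm{tr}}\asymp s$ and hence $M\asymp \sqrt s=t_n^{3/8}$. Applying Theorem~\ref{thm:KMT} to $Y_1,\dots,Y_\ell$ produces a standard Brownian motion $B$ with $\max_{j\le\ell}\|\sum_{i\le j}Y_i-B(j)\|\le M\log^4\ell$ off probability $e^{-c\log^4\ell}$; then on the (overwhelmingly likely) event that no $\|X_i\|$ exceeds $T$ we have $\tilde W(js)=\sqrt{\theta_s^{\mathrm{tr}}}\sum_{i\le j}Y_i$, so with $\sigma=\sigma(t_n):=\sqrt{\theta_s^{\mathrm{tr}}/s}$ and the rescaled Brownian motion $B'(u):=\sqrt s\,B(u/s)$ one gets $\|\tilde W(js)-\sigma B'(js)\|\le \sqrt{\theta_s^{\mathrm{tr}}}\,M\log^4\ell\lesssim t_n^{3/8}\log^4 t_n$ for all $j\le\ell$. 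Between consecutive grid points I would bound the increment of $\tilde W$ over a window of length $s$ by $n_s^2\sqrt s$ using hypothesis~(3) at scale $s$, and the corresponding increment of $\sigma B'$ by $n^2\sqrt s$ via a Brownian fluctuation bound; both are $O(n^2 t_n^{3/8})\ll t_n^{2/5}$. Collecting the exceptional probabilities — $2\ell e^{-8n^2}$ from Corollary~\ref{cor:conca}, $e^{-c\log^4\ell}$ from Theorem~\ref{thm:KMT}, $\ell Ce^{-s}$ from the truncation, $\ell e^{-4n_s^2}$ from hypothesis~(3) at scale $s$, and a negligible Brownian term — the total is $\le e^{-2n^2}$ for large $\beta$, because $\ell=t_n^{1/4}=e^{O(n)}$ while $n_s\ge\tfrac34n-1$ gives $4n_s^2\ge 2.1n^2$ (this is where the exponent $3/4>1/\sqrt2$ is needed) and $s\ge\beta^{3/4}\gg n^2$.

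It remains to check~\eqref{eq:sigma} for $\sigma=\sqrt{\theta_s^{\mathrm{tr}}/s}$. The estimates above give $|\theta_s^{\mathrm{tr}}/s-\sigma(s)^2|\le Cn_s^2s^{-1/10}=Cn_s^2 t_n^{-3/40}$, hence, as $\sigma,\sigma(s)\ge1$, $|\sigma-\sigma(s)|\le Cn_s^2 t_n^{-3/40}$. If $s<\beta$, i.e.\ $t_n<\beta^{4/3}$, then $\theta_s=2s$ exactly, so $|\sigma-\sqrt2|$ is exponentially small and lies far inside~\eqref{eq:sigma}; so assume $s\ge\beta$. By hypothesis~(4) at scale $s$, $|\sigma(s)-\sqrt2|\le 3\beta^{-1/16}-s^{-1/16}$, and since $s^{-1/16}=t_n^{-3/64}$,
\[
|\sigma-\sqrt2|\le \big(3\beta^{-1/16}-t_n^{-1/16}\big)-\big(t_n^{-3/64}-t_n^{-1/16}-Cn_s^2 t_n^{-3/40}\big).
\]
Because $\tfrac1{16}-\tfrac3{64}=\tfrac1{64}>0$ we have $t_n^{-3/64}-t_n^{-1/16}\ge\tfrac12 t_n^{-3/64}$ for large $t_n$, while $\tfrac3{40}-\tfrac3{64}=\tfrac9{320}>0$ and $n_s\le n\le C\log t_n$ force $Cn_s^2 t_n^{-3/40}\le\tfrac12 t_n^{-3/64}$ for large $\beta$; hence the parenthesized quantity is nonnegative and~\eqref{eq:sigma} holds. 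The main obstacle is precisely this balancing act: the splitting exponent (here $3/4$) must lie in $(1/\sqrt2,\,4/5)$ — above $1/\sqrt2$ so that the scale-$s$ travelling-far failure $e^{-4n_s^2}$ still beats $e^{-2n^2}$, and below $4/5$ so that $\sqrt s\ll t_n^{2/5}$ and the between-grid error is affordable — and then one must verify that the variance error $n_s^2 s^{-1/10}$ introduced by the recursion fits inside the shrinking gap $s^{-1/16}-t_n^{-1/16}$ that~\eqref{eq:sigma} leaves open; everything else is routine tail bookkeeping via Lemma~\ref{cor:far} and the inductive travelling-far estimate.
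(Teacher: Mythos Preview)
Your overall architecture --- split $[0,t_n]$ into $\ell$ blocks of length $s$, concatenate via Corollary~\ref{cor:conca}, truncate the block endpoints, apply Theorem~\ref{thm:KMT}, and control the between-grid oscillation --- is exactly the paper's approach (the paper uses $s=t_n^{2/3}$, $\ell=t_n^{1/3}$, but that by itself is not essential). However, there is a genuine error in the KMT step that breaks the proof.

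You truncate at $T=10ds$ via Lemma~\ref{cor:far}, so that $M=T/\sqrt{\theta_s^{\mathrm{tr}}}$. Since $\theta_s^{\mathrm{tr}}\asymp s$ this gives $M\asymp\sqrt{s}=t_n^{3/8}$, which you computed correctly. The mistake is in the next line: after Zaitsev's theorem bounds $\max_j\|\sum_{i\le j}Y_i-B(j)\|\le M\log^4\ell$, you must multiply by $\sqrt{\theta_s^{\mathrm{tr}}}$ to return to the $\tilde W$ scale, and
\[
\sqrt{\theta_s^{\mathrm{tr}}}\cdot M\log^4\ell \;=\; \sqrt{\theta_s^{\mathrm{tr}}}\cdot\frac{T}{\sqrt{\theta_s^{\mathrm{tr}}}}\cdot\log^4\ell \;=\; T\log^4\ell \;\asymp\; t_n^{3/4}\log^4 t_n,
\]
not $t_n^{3/8}\log^4 t_n$ as you claim. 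This is far larger than the target $t_n^{2/5}$, and no choice of splitting exponent in your admissible window $(1/\sqrt2,4/5)$ rescues it: with $T\asymp s$ the KMT error is always $\asymp s$ up to logarithms, while you need $s\gg t_n^{1/2}$ just for the between-grid bound.

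The fix is precisely what the paper does: truncate not at the crude level $10ds$ from Lemma~\ref{cor:far} but at the much sharper travelling-far threshold $n_s^2\sqrt{s}$, using the bound $\P(\sup_{u\le s}\|W_i(u)\|>n_s^2\sqrt{s})\le e^{-6n_s^2}$ (Lemma~\ref{l:too.far} at scale $s$, already established earlier in this section). Then $M\asymp n_s^2$ is only polylogarithmic in $t_n$, and the KMT error in the $\tilde W$ scale becomes $\sqrt{s}\cdot n_s^2\log^4\ell=n^{O(1)}\sqrt{s}$, which is $\ll t_n^{2/5}$ as soon as $s<t_n^{4/5}$. The truncation tail $\ell e^{-6n_s^2}$ is what actually forces your lower constraint on the splitting exponent (with $s=t_n^{3/4}$ one gets $6n_s^2\approx (27/8)n^2$, comfortably above $2n^2$). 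Your analysis of~\eqref{eq:sigma} and of the between-grid error is fine; only the truncation level needs to be changed.
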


\begin{proof}
Once again, we ignore the floor notation throughout this proof. Let $\ell:=t_n^{1/3}$ and let $W_1,\dots ,W_\ell$ be independent regenerated cyclic walks running for time $t_n^{2/3}$. As usual, $\tilde{W}$ is the concatenation of $W_1,\dots ,W_\ell$ and we have a coupling such that 
\begin{equation}\label{eq:conca3}
   \mathbb P \Big( \max _{s\le t_n} \|W(s)-\tilde{W}(s)\| \le t_n^{1/3+2\epsilon } \Big) \le e^{-cn^3}.
\end{equation}

Define the events $\mathcal A _i:=\big\{ \sup_{s\le t_n^{2/3}} \|W_i(s)\| \le n^2t_n^{1/3} \big\}$ and note that by Lemma~\ref{l:too.far} we have $\mathbb P (\mathcal A _i) \ge  1-e^{-cn^3}$. Define the random variables $Y_i:=t_n^{-1/3}\mathds 1 _{\mathcal A _i}  W_i(t_n^{2/3})$ and let $\sigma =\sigma (t_n)$ be the standard deviation of the first coordinate of $Y_i$.  

First, we will establish that \eqref{eq:sigma} holds. Set $\sigma '=\sigma (t_n^{2/3})$ be the constant from the inductive coupling with Brownian motion at time $t_n^{2/3}$ and so 
\begin{equation}\label{eq:sigma'}
    |\sigma '-\sqrt{2}| \le \max \big( 3\beta ^{-1/16}- t_n^{-2/60}, 0\big).
\end{equation}
Next, let $\mathcal B_i$ be the event that the coupling of $W_i$ with its Brownian motion $B_i$ was successful. We have that $\mathbb P (\mathcal B _i) \ge 1-e^{-cn^2}$.  Denote by $(Y_i)_1$ the first coordinate of $Y_i$ and by $\big(B_i(t_n^{2/3}) \big) _1$ the first coordinate of $B_i(t_n^{2/3})$. By the symmetry of the lattice $\mathbb Z ^d$ we have that $\mathbb E [( Y_i )_1 ]=0$ and therefore $\sigma ^2 =\mathbb E [ (Y_i)_1^2 ]$. Thus, using the triangle inequality in the $L^2(\Omega )$ space and Cauchy Schwarz inequality we obtain 
\begin{equation}
\begin{split}
    t_n^{2/3}|\sigma -&\sigma '|^2 \le \mathbb E \big(  t_n^{1/3}(Y_i)_1- \sigma ' \big(B_i(t_n^{2/3})\big)_1 \big)^2 \\
    &\le \mathbb E \big[ \mathds 1_{\mathcal A _i \cap \mathcal B _i} \big\| W_i(t_n^{2/3})-\sigma 'B_i(t_n^{2/3}) \big\|^2 \big] + 2\mathbb E \big[ \mathds 1_{\mathcal A _i^c \cup \mathcal B _i^c} \big( t_n^{1/3}\|Y_i\|+\|B_i(t_n^{2/3})\| \big)^2 \big]\\
    &\le C\big( t_n^{2/3} \big)^{4/5} + \sqrt{\mathbb P (\mathcal A _i^c \cup \mathcal B _i ^c) \cdot \mathbb E \big[ \big( n^2 t_n^{1/3}+ \| B_i(t_n^{2/3}) \|  \big)^4 \big]} \le Ct_n^{8/15}+Ce^{-cn^2}.
\end{split}
\end{equation}
We obtain that $|\sigma -\sigma '|\le t_n^{-1/16}$ and so
\[
|\sigma-\sqrt{2}|\leq |\sigma '-\sqrt{2}| + |\sigma -\sigma '| \leq \max \big( 3\beta ^{-1/16}- t_n^{-2/60}, 0\big) + t_n^{-1/16}.
\]
This finishes the proof of \eqref{eq:sigma} using \eqref{eq:sigma'} in both cases, when the maximum in \eqref{eq:sigma'} is zero and when it's positive.

Next, let $X_i:=\sigma ^{-1}Y_i$. By the symmetries of $\mathbb Z ^d$ the covarice between different coordinates of $X_i$ is $0$ and therefore its covariance matrix is $I$. Moreover, we have that $\|X_i\| \le 2n^2$. Thus, by Theorem~\ref{thm:KMT}, there is a Brownian motion $B'$ such that 
\begin{equation}
    \mathbb P \Big( \max _{j\le k} \big\| S_j-B'(j) \big\| \ge n^7 \Big) \le e^{-cn^3},
\end{equation}
where $S_j:=\sum _{i=1}^j X_i$. Moreover, on the event $\mathcal A :=\bigcap _{i\le k} \mathcal A _i$, we have that $S_j=\sigma ^{-1}t_n^{-1/3}\tilde{W}(jt_n^{2/3}) $ for all $j\le k$ and therefore 
\begin{equation}
    \mathbb P \Big( \max _{j\le k} \big\| \tilde{W}(jt_n^{2/3}) -\sigma t_n^{1/3}B'(j) \big\| \ge n^7t_n^{1/3} \Big) \le e^{-cn^3}.
\end{equation}
Rewriting the last equation with the Brownian motion $B(s):=t_n^{1/3}B'(t_n^{-2/3}s)$ we have 
\begin{equation}\label{eq:emb}
    \mathbb P \Big( \max _{j\le k} \big\| \tilde{W}(jt_n^{2/3}) -\sigma B(jt_n^{2/3}) \big\| \ge n^7t_n^{1/3} \Big) \le  e^{-cn^3}.
\end{equation}
Next, define the event 
\begin{equation}
    \mathcal C :=\Big\{ \text{For all } j \le k \text{ and }  s\in \big[ (j-1)t_n^{2/3},(j-1)t_n^{2/3} \big] \text{ we have }  \big\| B(s)-B(jt_n^{2/3}) \big\| \le n^2t_n^{1/3}  \Big\}
\end{equation}
and note that $\mathbb P (\mathcal C) \ge 1-e^{-cn^3}$. On $\mathcal C\cap \mathcal A$ and the complement of the event in the \eqref{eq:emb} we have that $\|\tilde{W}(s)-\sigma B(s)\| \le 2n^7t_n^{1/3}$ for all $s\le t_n$. This finishes the proof of the lemma using \eqref{eq:conca3}.
\end{proof}

\subsection{Transition Probabilities}

In this section we prove the inductive hypothesis on transition probabilities. To this end we use the coupling with Brownian motion at time $t_n$ from Section~\ref{sec:KMT} and the bound on transition probabilities at time $t_n^{4/5}$.

\begin{lem}
For all $t_n/4\le t \le t_n$ and $u\in \mathbb Z ^d$ we have that
\begin{equation}
    \mathbb P (W(t)=u) \le t^{-d/2+0.1}.
\end{equation}
\end{lem}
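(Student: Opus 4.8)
The plan is to combine the Brownian coupling at scale $t_n$ from Lemma~\ref{cor:kmt} with the inductive transition‑probability bound at scale $t_n^{4/5}$, applied to a short terminal piece of the walk. Ignoring floors, set $r:=t_n^{4/5}$ and $s:=t-r$. Since $t\ge t_n/4$ and $n$ is large we have $1\le r\le 4^n$ and $t/2\le s\le t\le t_n$, so the inductive hypotheses (in particular (2) and (3)) are available at time $r$, and Lemma~\ref{cor:kmt} is available at time $s$ — at level $n$ when $s>4^n$ and inductively otherwise. Crucially the proof of Lemma~\ref{cor:kmt} does not invoke the present lemma, so there is no circularity.

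First I would replace $W$ by a concatenation. Let $W_1,W_2$ be independent regenerated walks of lengths $s$ and $r$, and let $\tilde W$ be their concatenation. By Corollary~\ref{cor:conca} with $\ell=2$ there is a coupling with $\mathbb P(\|W(t)-\tilde W(t)\|\ge 2t_n^{3\epsilon})\le 4e^{-8n^2}$, hence, writing $B(x,\rho)$ for the Euclidean ball and $\rho:=2t_n^{3\epsilon}$,
\[
\mathbb P(W(t)=u)\le \mathbb P\big(\tilde W(t)\in B(u,\rho)\big)+4e^{-8n^2}.
\]
Conditioning on $W_2(r)=z$ and using independence,
\[
\mathbb P\big(\tilde W(t)\in B(u,\rho)\big)=\sum_z \mathbb P(W_2(r)=z)\,\mathbb P\big(W_1(s)\in B(u-z,\rho)\big).
\]

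For the main estimate I set $R:=n^2\sqrt r=n^2 t_n^{2/5}$ and split the sum at $\|z\|\le R$ and $\|z\|>R$. The far part is at most $\mathbb P(\|W_2(r)\|>R)\le e^{-cn^2}$ by the inductive traveling‑far property (3) at time $r$. For the near part I would use $\mathbb P(W_2(r)=z)\le r^{-d/2+0.1}$ (inductive hypothesis (2) at time $r$) together with the elementary fact that a given $w$ satisfies $w\in B(u-z,\rho)$ for at most $C\rho^d$ lattice points $z$; this yields
\[
\sum_{\|z\|\le R}\mathbb P(W_2(r)=z)\,\mathbb P\big(W_1(s)\in B(u-z,\rho)\big)\le C\rho^d\, r^{-d/2+0.1}\,\mathbb P\big(W_1(s)\in B(u,R+\rho)\big).
\]
Finally, applying Lemma~\ref{cor:kmt} at time $s$ (its failure costs $e^{-cn^2}$), and using $R+\rho+s^{2/5}\le 2n^2 t_n^{2/5}$ together with $\sigma(s)\in[1,2]$, the last probability is at most $C(2n^2t_n^{2/5})^d(\sigma(s)^2 s)^{-d/2}+e^{-cn^2}\le Cn^{2d}t_n^{2d/5}s^{-d/2}+e^{-cn^2}$. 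Since $\rho^d\le Ct_n^{3\epsilon d}=Ct_n^{3/200}$, since $r^{-d/2+0.1}=t_n^{-2d/5+0.08}$ (so that the factors $t_n^{2d/5}$ and $t_n^{-2d/5}$ cancel), and since $s\ge t/2$, we obtain
\[
\mathbb P\big(\tilde W(t)\in B(u,\rho)\big)\le Cn^{2d}t_n^{0.095}\,t^{-d/2}+Ce^{-cn^2}.
\]
As $t\ge t_n/4\ge 4^{n-1}$ we have $t^{0.1}\ge \tfrac12 t_n^{0.1}\ge 2Cn^{2d}t_n^{0.095}$ for $n$ large (because $t_n\ge 4^n$), and $Ce^{-cn^2}\le\tfrac12 t^{-d/2+0.1}$ since $t^{-d/2+0.1}\ge 4^{-(n+1)(d/2-0.1)}$; together with the concatenation bound this gives $\mathbb P(W(t)=u)\le t^{-d/2+0.1}=t^{-d/2+1/10}$.

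The main obstacle is closing the exponents \emph{without losing a constant}: the target exponent $1/10$ equals the inductive exponent $0.1$, so there is no slack for multiplicative constants. This forces one to handle the long piece of length $s\approx t$ through the Brownian coupling rather than through the inductive transition‑probability bound, which is only available up to time $4^n<t$ and would cost a factor $4^{d/2-O(1)}$. The choice of exponent $4/5$ for the short piece is then dictated by the requirement that $t_n^{2d/5}$ — the number of sites in a ball of the Brownian‑coupling resolution $t_n^{2/5}$ — exactly cancel the factor $r^{-d/2}$ coming from the transition‑probability bound on $W_2$, leaving only a $t_n^{o(1)}$ and polynomial‑in‑$n$ error that the slack $t_n^{0.005}$ swallows.
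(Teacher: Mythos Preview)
Your proof is correct and follows essentially the same approach as the paper: split $W$ into a long piece of length $t-r$ and a short piece of length $r\approx t^{4/5}$, apply the inductive transition bound to the short piece and the Brownian coupling (Lemma~\ref{cor:kmt}) to the long piece, then absorb the $t_n^{3\epsilon d}$ loss from the concatenation error using the slack between $0.08+3\epsilon d$ and $0.1$. The only cosmetic differences are that you sum over the endpoint of $W_2$ while the paper sums over the endpoint of $W_1$, and you use $r=t_n^{4/5}$ rather than $t^{4/5}$; neither affects the argument.
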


\begin{proof}
Let $t_n/4\le t \le t_n$ and $u\in \mathbb Z ^d$. Let $W_1$ and $W_2$ be independent regenerated walks  of lengths $t-t^{4/5}$ and $t^{4/5}$ respectively. Let $\tilde{W}$ be a concatenation of $W_1$ and $W_2$. By Lemma~\ref{lem:conca} there is a coupling such that $\|\tilde{W}(t)-W(t)\|\le t_n^{2\epsilon }$ with probability at least $1-e^{cn^3}$. Let $\mathcal B _1$ be the event that the coupling of $W_1$ with its Brownian motion $B$ was successful. We have that $\mathbb P (\mathcal B ) \ge 1- e^{-cn^2}$. Let $\mathcal A _2$ be the event that $\|W_2(t^{4/5})\| \le n^2 t^{2/5} $ and note that $\mathbb P (\mathcal A _2 )\ge 1-e^{-cn^2}$. For all $v\in \mathbb Z ^d$ we have
\begin{equation}
\begin{split}
    \mathbb P \big(\tilde{W}(t)=v\big) &\le \mathbb P ( \mathcal A _2 ^c )+ \sum _{ \substack{w: \\ \|w-v\| \le  n^2 t^{2/5} }    } \mathbb P \big(  W_1(t-t^{4/5})=w \big) \cdot \mathbb P  \big( W_2(t^{4/5}) =v-w \big) \\
    &\le e^{-cn^2} +t^{4/5(-d/2+0.1 )} \mathbb P \big( \| W_1(t-t^{4/5})-v \| \le n^2 t^{2/5} \big) \\
    &\le e^{-cn^2} +\mathbb P (\mathcal B _ 1 ^c) +t^{-2d/5+0.08 } \mathbb P \big( \| \sigma B(t-t^{4/5})-v \| \le 2n^2 t^{2/5} \big)\\
    &\le e^{-cn^2} +\mathbb P (\mathcal B _ 1 ^c) +C t^{-2d/5+0.08} n^{2d} t_n^{2d/5-d/2} \\
    &\le e^{-cn^2} +Cn^{2d}t^{-d/2+0.08} \le t^{-d/2+0.09}.
\end{split}
\end{equation}
Where in the second inequality we used the inductive bound on transition probabilities at time $t^{4/5}$. Thus, we obtain
\begin{equation}
    \mathbb P (W(t)=u) \le e^{-cn^3} +\mathbb P \big( \| \tilde{W}(t)-u \| \le t_n^{2\epsilon } \big) \le Ct_n^{2\epsilon d} t^{-d/2+0.09} \le t^{-d/2+0.1},
\end{equation}
where in the last inequality we used that $\epsilon = 1/(300d)$.
\end{proof}

\subsection{The Pair Proximity Property}
In this section we prove the pair proximity property in the inductive hypothesis. The proof uses heavily the arguments and notations of Section~\ref{sec:heavy}. Note that by translation invariance in space and time, we may assume without loss of generality that $u_1=0, q_1=0$. Thus, it suffices to prove the following lemma.

\begin{lem}\label{lem:pair induction step}
For all $u\in \mathbb Z ^d$ and $q\le \beta $ we have
\[
\mathbb P \Big(\Omega \text{ and }\big| \fP_n(0,u,0,q) \big| \ge 3.9^n \Big) \le e^{-cn^3} .
\]
\end{lem}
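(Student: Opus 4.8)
\emph{Overview.} The plan is to condition on $W_1$, couple $W_2$ with a Brownian motion that is independent of $W_1$, and then extract the tail bound from an exponential--moment estimate (Khasminskii's lemma) for the occupation time of the thin ``sausage'' $S$ of radius $1.9^n$ around the range of $W_1$. The point is that in dimension $d\ge5$ (indeed $d\ge4$ suffices for this step) the Green capacity of such a tube, seen from any point, is only $O((1.9^n)^2)=O(3.61^n)$; since the inductive hypothesis asks us to beat the strictly larger threshold $3.9^n$, Khasminskii's lemma yields a tail of order $\exp(-c(3.9/3.61)^n)$, which is far smaller than $e^{-n^2}$.

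\emph{Reductions and decoupling.} By translation invariance take $u_1=q_1=0$; imposing the closing cutoffs $\tau_{\mathrm{reg}}^{(i)}$ only shrinks $\fP_n$, so we may work with regenerated walks. The delicate reduction is to independence of $W_1$ and $W_2$. Up to the first cyclic--compatible near-miss $\rho:=\inf\{s_2:\exists\,s_1\le t_n',\ s_1\equiv s_2\!\!\pmod\beta,\ \|W_1(s_1)-W_2(s_2)\|\le1\}$ the two cyclic walks read disjoint families of edge clocks, so on $[q_2,\rho]$ we may couple $W_2$ with a regenerated walk independent of $W_1$ exactly as in Lemmas~\ref{l:t.hit.couple} and~\ref{lem:relaxed4}; and since $\rho\le M$, the cutoff at $M$ in the definition of $\fP_n$ costs nothing there. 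Iterating (restarting $W_2$ with fresh randomness after each near-miss that does not cause a merger) expresses $W_2$ as a concatenation of at most $R$ pieces, each independent of $W_1$ given the past, where $R$ counts the near-misses before $t_n'$ without merging. Using the relaxed-time and escape machinery of Sections~\ref{sec:relaxed}--\ref{sec:escape} (a regenerated walk started within distance $1$ of the sparse cyclic history of $W_1$ quickly reaches a relaxed time and then avoids that history for time $4^n$), one bounds $R\le e^{o(n)}$ off an event of probability $\le e^{-2n^2}$.

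\emph{Brownian coupling and the occupation-time tail.} Since $t_n'=t_n/n^4<t_{n-1}$, hypothesis (4) at scale $n-1$ couples $W_1$ and each piece of $W_2$ with scaled Brownian motions up to error $t_{n-1}^{2/5}\ll1.9^n$, and hypothesis (3) confines all relevant ranges to $B(0,n^C2^n)$, at a cost of probability $\le e^{-2n^2}$. Writing $S^+:=\{x:\mathrm{dist}(x,\sigma B_1([0,t_n']))\le r\}$ with $r:=2\cdot1.9^n$, every time contributed to $\fP_n$ by a given piece lies in $\{s:\sigma B_2(s)\in S^+\}$, where $B_1,B_2$ are independent. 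Condition on $B_1$, so $S^+$ is a fixed tube of radius $r$ around a curve of length $t_n'$ inside $B(0,n^C2^n)$; the Brownian Green's function obeys $\sup_z\int_{S^+}G(z,x)\,dx\le Cr^2$ (the core contributes $\int_{\|x-z\|\le r}\|x-z\|^{2-d}\,dx\asymp r^2$, the tube contributes $\int_r^{n^C2^n}\rho^{d-1}\cdot\rho^{2-d}\cdot\rho^{-(d-2)}\,d\rho\asymp r^2$, convergence at the lower end needing $d\ge4$). Khasminskii's lemma then gives $\mathbb E^{u_2}\big[\exp\!\big(\lambda\!\int_0^\infty\!\mathbf 1_{S^+}(\sigma B_2(s))\,ds\big)\big]\le2$ for $\lambda\asymp r^{-2}$, so a single piece contributes more than $3.9^n/R$ with probability at most $2\exp(-c\,3.9^n/(R\,(1.9^n)^2))=2\exp(-c(3.9/3.61)^n/R)$. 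Summing over the $\le R\le e^{o(n)}$ pieces, over the conditioning on $B_1$, and adding the good-event failures yields
\[
\mathbb P\big(|\fP_n|\ge3.9^n\big)\ \le\ e^{-2n^2}+2R\exp\!\big(-c\,(3.9/3.61)^n/R\big)\ \le\ e^{-n^2},
\]
since $(3.9/3.61)^n=e^{\Omega(n)}$ swamps any subexponential factor. When $t_n<\beta$ the walks are genuinely Markov (continuous-time random walks) and Khasminskii's lemma applies to them directly, giving the same bound without the Brownian step.

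\emph{Main obstacle.} The only serious difficulty is the decoupling: establishing that $W_1$ and $W_2$ are independent up to the first cyclic near-miss is routine, but controlling the number $R$ of subsequent non-merging near-misses — so that it stays subexponential with the stated probability — is where one must carefully invoke the relaxed-time avoidance (Claim~\ref{claim:A}) and the escape theorem (Theorem~\ref{thm:escape}) proved earlier in the induction step. Granting that, the remaining ingredients (the inductive Brownian coupling, the $O(r^2)$ capacity bound for a thin tube in $d\ge4$, and Khasminskii's lemma) are standard, and the arithmetic margin between $1.9^2=3.61$ and $3.9$ — chosen in the inductive hypothesis precisely for this step — converts the occupation-time estimate into a tail far stronger than $e^{-n^2}$.
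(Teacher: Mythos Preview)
Your approach differs from the paper's in replacing its direct excursion-counting argument by a Khasminskii/Green's-function occupation-time bound. The paper proceeds by defining stopping times $\zeta_i$ (relaxed$^*$ times for $W_2$ far from both the range $V$ of $W_1$ and the bad blocks) and $\zeta_i'$ (the next time $W_2$ either comes within distance $1$ of $V$, or after time $3.89^n$ comes within $1.9^n$ of $V$, or hits its own history). From each $\zeta_i$ one shows, using Lemma~\ref{l:relaxed*} together with a Brownian-motion estimate ruling out returns to the $1.9^n$-tube after time $3.89^n$, that $\zeta_i'\ge\zeta_i+4^n$ with probability at least $1/2$; hence the number $J$ of such excursions before $t_n'$ satisfies $J\le n^5$ except with probability $e^{-n^4}$. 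Crucially, the gaps $\zeta_i-\zeta_{i-1}'$ are shown to be at most $2t_n^{\epsilon}$ by repeating the argument of Lemma~\ref{l:relaxed.pts}, which uses the \emph{pair proximity property at the smaller scale $k$} (condition (3) of a good block). One then reads off $|\fP_n|\le J(3.89^n+2t_n^{\epsilon})\le 3.9^n$ directly.

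The genuine gap in your proposal is the decoupling step. You correctly identify it as the main obstacle, but invoking Claim~\ref{claim:A} and Theorem~\ref{thm:escape} alone does not bound $R$: after a non-merging near-miss, $W_2$ is sitting on an edge whose clock has already been revealed by $W_1$, so you \emph{cannot} immediately re-couple with a fresh independent walk. There is an uncontrolled interval during which $W_2$ is entangled with $W_1$'s path before it reaches a point from which a fresh coupling is valid, and that interval contributes to $\fP_n$. Bounding both the length of these intervals and the number $R$ of them is exactly what the paper's stopping-time machinery does, and it requires the inductive pair property at scale $k$ (via Lemma~\ref{l:relaxed.pts}), not just the single-walk escape results of Sections~\ref{sec:relaxed}--\ref{sec:escape}. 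Once that machinery is in place, however, the bound $|\fP_n|\le n^5\cdot(3.89^n+2t_n^\epsilon)$ drops out without any appeal to Khasminskii, so your occupation-time step, while correct in spirit (indeed $\sup_z\int_{S^+}G(z,x)\,dx\lesssim n\cdot r^2$ on the good event for $W_1$ in $d\ge5$; your integral display is garbled and the claim ``needing $d\ge4$'' should read $d\ge5$), is superfluous.
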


\begin{proof}
Recall the definition of $\hat{\tau }$ in equation~\eqref{eq:def of hat tau} and that $2^k$ is the scale of the bad blocks where $k:= \lfloor 4\delta n \rfloor $. We write $\hat{\tau }^{(1)}$ and $\hat{\tau }^{(2)}$ to denote the stopping times analogous to $\hat{\tau }$ for the walks $W_1$ and $W_2$ respectively. Let $\cH_1$ be the event that $\hat{\tau }^{(1)}> t_n'$ and that 
\begin{equation}
    \big\{ \forall s_1,s_2 \le t_n' \text{ with } |s_1-s_2|\le 3.6^n \text{ we have } \ \|W_1(s_1)-W_1(s_2)\| \le 1.9^n \big\}.
\end{equation}
Using the results of Section~\ref{sec:heavy}, the Brownian approximation in Lemma~\ref{cor:kmt} and a straight forward Brownian motion estimate (using that $1.9^2>3.6$) we obtain that $\mathbb P (\mathcal H _1 ^c \cap \Omega  )\le e^{-cn^3}$. Let $V=\{W_1(s):s\in[0,t_n']\}$ denote the vertices visited by $W_1$ in $[0,t_n']$. We will first reveal the path $W_1$ and then let $\cF_t^+$ denote the $\sigma$-algebra generated by the walk $W_2$ up to time $t$ together with the walk $W_1$ in $[0,t_n']$. We let $\mathcal T _2$ be the set of relaxed times with respect to $W_2$ and define the stopping time 
\begin{equation}
    \tau _{\text{merge}}:=  \inf \big\{ t_2>0 : \exists t_1\le t_n', \ t_2+q-t_1\in \beta \mathbb Z \text{ and } W_1(t_1)=W_2(t_2) \big\}.
\end{equation}
Note that on $\Omega$ we have that $\tau _{\text{merge}}>t_n'$. Next, let us define the following series of stopping times.  Let $\zeta_0=\zeta'_0=0$ and let
\begin{equation}
\zeta_i  :=t_n' \wedge \tau _{\text{reg}}^{(2)} \wedge \tau _{\text{merge}} \wedge \inf\big\{t\geq \zeta_{i-1}': \  t\in \mathcal T _2  \text{ and } d\big( W_2(t),V \big) \geq 1.9^k \big\},
\end{equation}
where $\tau _{\text{reg}}^{(2)}$ is defined analogously to \eqref{eq:def of tau reg} for the walk $W_2$. Next, define
\begin{align*}
\zeta_i'= t_n' \wedge \tau _{\text{hit}} ^{(2)}(\zeta_i) &\wedge \inf \big\{t\geq \zeta_i: d(V,W_2(t)) \leq 1\big\}\wedge \inf \big\{ t\geq \zeta_i + 3.89^n: d\big( W_2(t),V \big) \leq 1.9^n \big\},
\end{align*}
where $\tau _{\text{hit}} ^{(2)}$ is defined analogously to \eqref{eq:hit} with respect to $W_2$.  To count the number of such stopping times we let
\[
J=\inf \big\{ i:\zeta_i= t_n' \wedge \tau _{\text{merge}} \wedge \tau _{\text{reg}}^{(2)} \big\}.
\]

First we show that there is a large probability that $\zeta_i'-\zeta_i\geq 4^n$. To this end, let $\tilde{W}$ be a regenerated walk starting from $0$ and independent $\mathcal F _{\zeta _i}^+$. For $s\ge \zeta _i$ let $W'(s):=W_2(\zeta _i) +\tilde{W}(s-\zeta _i)$. By the same arguments as in Lemma~\ref{l:t.hit.couple}, conditional on $\cF_{\zeta_i}^+$, we can couple $\{W_2(s)\}_{s\geq \zeta_i}$ with $\{W'(s)\}_{s\geq \zeta_i}$ until $\zeta _i'$.

First, using Lemma~\ref{lem:relaxed4} and the fact that $\zeta _i\in \mathcal T _2$ we have that 
\begin{equation}
 \mathbb P \big( \zeta _i' =\tau _{\text{hit}} ^{(2)} (\zeta _i)  \mid \mathcal F _{\zeta _i}^+ \big) \le  \beta ^{-c}.
\end{equation}

Next, note that on $\mathcal H _1$ we have $|N(V,1) \cap [-r,r]^d| \le Cr^{5/2}$ for all $r\ge 1.9^k$. Indeed, by Lemma~\ref{lem:heavy2}, before $\hat{\tau }^{(1)}$ non of the blocks of side length at least $t_n^ \delta <\!< 1.9^k$ are heavy with respect to $W_1$. Thus, by Claim~\ref{claim:A} and using that $d\big( W'(\zeta _i) ,V \big) \ge 1.9
^k$ we obtain 
\begin{equation}
    \mathbb P \big( \forall \zeta _i \le s\le \zeta _i +4^n ,\ d(W'(s),V) >1  \mid \mathcal F _{\zeta _i}^+ \big) \ge 1-e^{-ck}.
\end{equation}




Finally we need to check that after time $\zeta _i+3.89^n$ the walk $W'(t)$ is at distance at least $1.9^n$ away from $V$ with high probability. On the event $ \mathcal H_1 $ we have that 
\begin{equation}
 N(V, 1.9^{n}) \subseteq  \bigcup _{i=0}^{t_n'/3.6^n}  B\big( W_1(i\cdot 3.6^n), 1.9^{n+2} \big).
\end{equation}
Thus, on the event $\mathcal H _1 $ by the Brownian approximation and Lemma~\ref{l:BM.ball.exit} we have
\begin{equation}
\begin{split}
    \mathbb P \Big( &\exists t \in [\zeta _i+3.89^n,\zeta _i +4^n], \ W'(t)\in N(V, 1.9^{n}) \ \big| \ \mathcal F ^+_{\zeta _i} \Big) \\
    &\le \sum _{i=0}^{t_n'/3.6^n} \mathbb P \Big( \exists t \in [\zeta _i+3.89^n,\zeta _i +4^n], \ \big\| W'(t)- W_1(i\cdot 3.6^n) \big\| \le  1.9^{n+2}  \ \big| \ \mathcal F ^+_{\zeta _i} \Big)\\
    &\quad \quad \quad \quad \quad \quad \quad \quad \quad \quad \le C(4/3.6)^n \big( 1.9/\sqrt{3.89} \big) ^{(d-2)n} \le e^{-cn},
\end{split}
\end{equation}
where in the last inequality we used that $d\ge 5$.

Combining the above estimates we have that on the event $\mathcal H _1 $ 
\[
\P\big( \zeta _i' \geq \zeta_i+4^n    \mid \cF_ {\zeta _i}^+\big) \geq 1/2.
\]
So each excursion starting from $\zeta_i$ has probability at least $1/2$ of lasting for at least time $4^n$.  We can have at most $1$ such intervals before time $t_n'$ and so
\begin{equation}\label{eq:J is small}
\P\big( J\geq n^3 \big)  \leq \P \big( \hbox{Bin}(n^3,1/2) \leq 2 \big) \leq e^{-cn^3}.   
\end{equation}

Between $3.89^n + \zeta_i$ and $\zeta_i'$ the distance from $W_2(t)$ to $V$ is at least $1.9^n$ and therefore
\begin{equation}\label{eq:fn.bound}
\big| \fP_n(0,u,0,q) \big| \leq \sum_{i=1}^{J} 3.89^n + \zeta_i  - \zeta_{i-1}'.
\end{equation}
It suffices to bound $\zeta _i-\zeta _{i-1}'$ with high probability. For $t\le t_n'$ let $\Lambda '_t$ be the set of bad blocks discovered by both $W_1$ in the interval $[0,t_1]$ and $W_2$ in the interval $[0,t]$ (note that because of the pair property, these bad blocks might contain blocks that are good with respect to $(W_1(s) : s\le t_n')$ and good with respect to $(W_2(s):s\le t)$). We define the stopping time $\tau ' _{\text{bad}}$ with respect to the filtration $\mathcal F _{t}^+$ analogously to \eqref{eq:def of tau bad} with $\Lambda '_t$. By the same arguments we have that $\mathbb P (\tau ' _{\text{bad}}\le t_n') \le e^{-cn^3}$.

Let $t_k'\le t<\hat{\tau } ^{(2)}$ such that $W_2(t)\notin \Lambda '_t$. By Lemma~\ref{l:relaxed.pts} we have that
\begin{equation}\label{eq:T2}
   \big| \mathcal T _2 \cap [t-t_k',t] \big| \ge 0.9t_k'.
\end{equation}
Moreover, using the pair property just like in the proof of Lemma~\ref{l:relaxed.pts}, on the event $\mathcal H _1 \cap \{t<\hat{\tau }^{(2)}, W_2(t) \notin \Lambda '_t\}$ we have
\begin{equation}\label{eq:W_1 W_2 far}
    \big| \big\{  s_2\in [t-t_k',t] : \forall s_1\le t_n', \ \|W_1(s_1)-W_2(s_2)\| > 1.9^k \big\}  \big| \ge 0.9 t_k'.
\end{equation}

Next, just like in the proof of Corollary~\ref{cor:relaxed.pts}, if $ \tau '_{\text{bad}}>t_n'$ then 
\begin{equation}
  \big|  \big\{ t\le t_n' : W_2(t)\in \Lambda ' _t\big\} \big| \le t_n^{\epsilon/3}
\end{equation}
in both cases, when $t_n^{5\delta }\le \beta $ and when $t_n^{5\delta }\ge \beta $. Thus, by \eqref{eq:T2} and \eqref{eq:W_1 W_2 far}, on the event $\mathcal H:=\mathcal H _1 \cap \{ \hat{\tau }^{(2)} \wedge \tau '_{\text{bad}} >t_n'\}$ we have that any interval $I\subseteq [0,t_n']$ of length $t_n^{\epsilon }$ contains a time $s_2\in \mathcal T _2$ such that $\|W_2(s_2)-W_1(s_1)\|\ge 1.9^k$ for all $s_1\le t_n'$. It follows that on $\mathcal H$ we have that $\zeta _{i}-\zeta _{i-1}'\le t_n^{\epsilon }$ for all $i\ge 1$ and therefore on $\mathcal H \cap \{J\le n^3\}$ we have 
\[
\big| \fP_n(0,u,0,q) \big| \le \sum_{i=1}^{J} 3.89^n + \zeta_i  - \zeta_{i-1}' \le J ( 3.89^n+t_n^{\epsilon }) \le 3.9^n .
\]
This finishes the proof of the lemma by \eqref{eq:J is small} and the fact that $\mathbb P (\mathcal H ^c \cap \Omega)\le e^{-cn^3}$.
\end{proof}

\section{Induction base}\label{sec:base}

In this section we prove the inductive assumption when $t_n=t_0 4^n \le \beta$. For time less than $\beta$, the cyclic walk is simply a continuous time simple random walk and thus the \emph{transition probabilities} properties holds by standard properties of random walks.  Similarly for the \emph{travelling far} property we have
\begin{equation}\label{eq:induct.base.far}
\mathbb P \Big( \sup _{s\le t_n} \|W(s)\| \leq n^2\sqrt{t_n} \Big) \geq 1-e^{-cn^4}.
\end{equation}
By Claim~\ref{claim:fast}
\begin{equation}\label{eq:induct.base.fast}
\P\Big(\sup_{t,t'\leq t_n,|t-t'|\leq 1} |W(t)-W(t')|\leq n^5 \Big) \geq 1- e^{-n^4}
\end{equation}
and so by KMT embedding from Theorem~\ref{thm:KMT} we have a coupling with Brownian motion such that
    \begin{equation}\label{eq:induct.base.brown.approx}
        \mathbb P \Big( \max _{t\le t_n} | W(t)-  B(t)| \leq n^{10} \Big) \geq 1- e^{-n^4}
    \end{equation}
establishing the \emph{approximately Brownian} property. It remains to verify the relaxed path property and the pair property.

\begin{lem}
Suppose that $t_n\le \beta $. Then, the path up to time $t_n$ is relaxed with probability at least $1-e^{-n^3}$.
\end{lem}

\begin{proof}
Let $\cA_1$ be the intersection of the events in equations~\eqref{eq:induct.base.far},~\eqref{eq:induct.base.fast} and~\eqref{eq:induct.base.brown.approx}.  So $\P(\cA _1)\ge 1-e^{-cn^4}$.
Let $\cA_2$ be the event that for $B(t)$ all boxes of scale $\beta^{\delta}$ or higher are very lite,
\[
\cA_2:=\Big\{\forall r\ge \beta ^\delta ,x\in \mathbb Z ^d ,  \  \big| \{i\in [0,t_n]\cap \Z: B(i)\in B(x,r) \}\big|  < r^{2.1} \Big\}.
\]
Note that when $r\ge t_n$ we have $ |\{i\in [0,t_n]\cap \Z: B(i)\in B(x,r) \}| < r^{2.1}$ for all $x\in \mathbb Z ^d$ and that on the event $\cA_1$ only blocks within distance $n^2 2^n$ are visited. Moreover, for any $r\ge \beta ^\delta $ and $x\in \mathbb Z ^d$, by Lemma~\ref{l:heavy.BM.ball}
\[
\P \big(  \big| \{i\in [0,t_n]\cap \Z: B(i)\in B(x,r) \} \big| < r^{2.1} \big) \geq 1- e^{-n^4}.
\]
Thus, taking a union bound over $\|x\|\le 2n^22^n$ and $\beta ^\delta \le r \le t_n$ we obtain $
\P( \cA_2 ) \ge 1-e^{-cn^4}$.
On the event $\cA_1\cap\cA_2$ we claim that all boxes $B(x,r)$ for $r\ge \beta ^\delta $ are not heavy with respect to $W$. Indeed, let $x\in \mathbb Z ^d$ and $r\ge \beta ^\delta$ and suppose that $W(t)=u \in B(x,r)$ for some $t\leq t_n$.  Then by~\eqref{eq:induct.base.fast} and the coupling we have that $|B(\lfloor t \rfloor)-u| \leq 2n^{10}$. We can cover every vertex within distance $2n^{10}\le r$ of $B(x,r)$ by $3^d$ disjoint blocks of the form $B(y,r)$ with $y\in \mathbb Z ^d$. Thus, on $\mathcal A _1 \cap \mathcal A _2$
\begin{equation}\label{eq:induct.base.no.heavy}
\big| \{W(t):t\leq t_n\}\cap B(x,r) \big| \leq 3^d (4n^{10})^d r^{2.1} < r^{5/2}.
\end{equation}
Now let
\[
T'=\big\{  t\leq t_n: \forall 0\le s \le t-\beta^{3\delta }, \   |B(s)-B(t)| \geq 2\beta^{\delta } \big\},
\]
and
\[
T=\big\{  t\leq t_n: \forall 0\le s \le t-\beta^{3\delta }, \  |W(s)-W(t)| \geq \beta^{\delta } \big\}.
\]
On the event $\cA_1$, using that $n^{10}<<\beta ^{\delta }$, we have $T'\subset T$.  Furthermore, on $\mathcal A _1 \cap \mathcal A _2$, using equation~\eqref{eq:induct.base.no.heavy},  we have that all times $t\in T$ are relaxed. Thus, it suffices to show that $T'$ is large with high probability.  If $t_n \leq \beta^{3\delta }$ then $|T'|=|T|=t_n$.  Suppose next that  $\beta^{3\delta }\leq t_n \leq \beta$. Set
\[
B^*(t)=\frac{1}{2\beta^{\delta } } B\big(4\beta^{2\delta }t\big).
\]
which is also distributed as Brownian motion.  Then letting 
\[
T^*=\Big\{t\leq \frac{t_n}{4\beta ^{2\delta }}   : \forall s \leq t - \beta^{\delta }/4 \hbox{ with } |B^*(s)-B^*(t)| \ge 1\Big\}
\]
we have that $T'=4\beta^{2\delta }T^*$. Thus, by Claim~\ref{claim:brownian}  with $\zeta=\delta /2$ we obtain
\begin{equation}
\P \big( |T'| \geq 0.95 t_n \big) =\P \Big( |T^*| \geq 0.95 \frac{t_n}{4\beta ^{2\delta} }\Big) \geq 1- \exp\Big(-\big( \frac{t_n}{4\beta ^{2\delta }} \big)^c\Big) \geq 1- e^{-n^4}.
\end{equation}
On the event $\{|T| \geq 0.95t_n\}\cap \cA_1\cap\cA_2$ at least a $0.95$ fraction of the times are relaxed and so the path is relaxed.  This has probability at least $1-e^{-cn^4}$ which completes the proof.
\end{proof}

Finally we complete the base of the induction by proving the pair proximity property. As before, we may assume that $u_1=0$ and that $q_1=0$. Note also that by reversibility in time we may assume that $q_2\le \beta /2$.

\begin{lem}
For any $u\in \mathbb Z ^d$ and $q\le \beta /2 $ we have that
    \begin{equation}\label{eq:path.pair.base}
       \mathbb P \Big(\Omega  \text{ and }\big| \fP_n (0,u,0,q) \big| \ge 3.9^n \Big) \le e^{-n^3} .
    \end{equation}
\end{lem}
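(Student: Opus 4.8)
The plan is to exploit that, in the base regime, both walks are genuine continuous-time simple random walks with no self-interaction. Since $q_1=0$, $q_2=q\le\beta/2$ and $t_n'=t_n/n^4<\beta/2$, every cyclic time visited by $W_1$ lies in $[0,t_n')$ and every cyclic time visited by $W_2$ lies in $[q,q+t_n')$, so neither walk wraps around the period; hence $\tau_{\mathrm{reg}}^{(1)},\tau_{\mathrm{reg}}^{(2)}>q_i+t_n'$, neither walk ever interacts with its own history, and $W_1,W_2$ are simply two independent simple random walks. Writing $V:=\{W_1(s):0\le s\le t_n'\}$ for the range of $W_1$, we have $s_2\in\fP_n$ only if $W_2(s_2)\in N(V,1.9^n)$, and the merge constraint only shrinks $\fP_n$, so it is dropped. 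Thus it suffices to show that the occupation time $L:=\mathrm{Leb}\{s\in[q,q+t_n']:W_2(s)\in N(V,1.9^n)\}$ satisfies $\mathbb P(L\ge 3.9^n)\le e^{-n^3}$.

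First I would reveal $W_1$ and fix its geometry on a good event $\cE_1$ of probability at least $1-e^{-n^3}$. Using the KMT embedding of Theorem~\ref{thm:KMT} to couple $W_1$ with a Brownian motion up to error $n^{10}$ (as in \eqref{eq:induct.base.brown.approx}), together with elementary random walk deviation estimates and a Brownian ball-occupation bound (Lemma~\ref{l:heavy.BM.ball}, summed by a union bound over dyadic centres and radii), the event $\cE_1$ will record: $\mathrm{diam}(V)\le n^2\sqrt{t_n}$; the walk $W_1$ moves at most $1.9^n$ over any time window of length $\Delta\asymp 3.6^n$; and, for every $x\in\Z^d$ and every $\ell\ge 1$, at most $Cn^{C}4^{\ell}/\Delta$ of the points $W_1(j\Delta)$, $j=0,\dots,\lceil t_n'/\Delta\rceil$, lie in $B(x,2^{\ell+1})$. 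The movement bound yields the covering $N(V,1.9^n)\subseteq\bigcup_{j=0}^{\lceil t_n'/\Delta\rceil}B\!\left(W_1(j\Delta),\,2\cdot 1.9^n\right)$ by at most $C(4/3.6)^n n^{C}$ balls.

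The key step is a Khas'minskii/Kac exponential bound. Conditionally on $W_1$, with $A:=N(V,1.9^n)$ fixed, $L$ is the occupation time of $A$ by the simple random walk $W_2$, and if $\gamma:=\sup_{x}\mathbb E_x[\text{total time $W_2$ ever spends in }A]=\sup_x\sum_{z\in A}G(x,z)$, where $G$ is the simple random walk Green's function, then $\mathbb P(L\ge t\mid W_1)\le 2e^{-t/(2\gamma)}$. On $\cE_1$ I would bound $\gamma$ via the covering: for any $x$,
\[
\sum_{z\in A}G(x,z)\le\sum_{j}\sum_{z\in B(W_1(j\Delta),\,2\cdot 1.9^n)}G(x,z)\le C\sum_{j}\min\!\Big((1.9^n)^2,\ (1.9^n)^{d}\,|x-W_1(j\Delta)|^{-(d-2)}\Big),
\]
using $\sum_{z\in B(w,r)}G(x,z)\le C\min(r^2,\,r^d|x-w|^{-(d-2)})$. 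Grouping the $W_1(j\Delta)$ by the dyadic scale of their distance to $x$ and inserting the occupation bound from $\cE_1$, the annulus at scale $2^\ell\gtrsim 1.9^n$ contributes at most $Cn^{C}(1.9^n)^{d}2^{(4-d)\ell}/\Delta$; since $d\ge 5$ the exponent $4-d$ is negative, so the geometric sum is dominated by its first term $2^\ell\asymp 1.9^n$, giving $\gamma\le Cn^{C}(1.9^n)^{4}/\Delta\le n^{C}c_0^{\,n}$ for a constant $c_0=1.9^4/3.6<3.9$. Hence $\mathbb P(L\ge 3.9^n\mid W_1)\le 2\exp\!\big(-\tfrac12(3.9/c_0)^n n^{-C}\big)\le e^{-n^3}$ on $\cE_1$ for $n$ large, and combining with $\mathbb P(\cE_1^c)\le e^{-n^3}$ finishes the proof.

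The main obstacle is the geometric input on $\cE_1$: controlling how spread out the coarse-grained trajectory $\big(W_1(j\Delta)\big)_j$ is — that no dyadic ball contains many more of these points than the diffusive count $4^\ell/\Delta$ — and then checking that the resulting Green's-function sum converges, which is precisely where $d\ge 5$ (equivalently $4-d<0$) enters. A parallel but more computational alternative, which I would fall back on if those occupation estimates are awkward to phrase, is to mimic Lemma~\ref{lem:pair induction step} directly: in the base regime its ``$W_2$ does not hit its history'' and ``relaxed$^*$'' inputs become automatic, and one decomposes the excursions of $W_2$ relative to $V$, uses the covering of $N(V,1.9^n)$ by $\le(4/3.6)^n$ balls of radius $1.9^{n+2}$ with Lemma~\ref{l:BM.ball.exit} to show each excursion starting at distance $\ge 1.9^k$ from $V$ runs out the remaining time with probability bounded below — the numerology $(4/3.6)\,(1.9/\sqrt{3.89})^{d-2}<1$ again forcing $d\ge 5$ — and concludes there are only $O(\mathrm{poly}(n))$ excursions, each contributing at most $3.89^n+2t_n^{\epsilon}$ to $\fP_n$.
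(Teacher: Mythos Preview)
Your independence claim is the gap. The two walks $W_1,W_2$ live in the \emph{same} interchange process: both are driven by the single family of edge Poisson clocks, so when $q\le t_n'$ the time windows $[0,t_n']$ and $[q,q+t_n']$ overlap and the walks are genuinely correlated. Whenever $|W_1(s)-W_2(s)|=1$ in the overlap, a ring of the shared edge swaps them; and the position of $W_2$ at time $0$ (the vertex $v$ with $Y_v(q)=u$) is itself a function of the clocks on $[0,q]$, hence of $W_1$. Conditioning on $W_1$ therefore does not leave $W_2$ a simple random walk, and neither your Khas'minskii occupation bound nor the fallback excursion argument can be applied directly to $W_2$.

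The paper's proof handles exactly this point. It writes $W_1=Y_0$ and $W_2=Y_v$ for the (random) $v$ with $Y_v(q)=u$, then observes that the difference $Y_0(s)-Y_v(s)$ is a Markov process which jumps like a doubled-rate random walk except when $|Y_0-Y_v|=1$. Transience in $d\ge3$ gives that the number of unit time intervals with $|Y_0-Y_v|=1$ is at most $n^5$ with probability $1-e^{-4n^4}$, and from this one couples $Y_v$ with a genuinely independent simple random walk $X$ satisfying $\sup_s|Y_v(s)-X(s)|\le n^{10}$. A union bound over all $|v|\le 4^{n+2}$ absorbs the randomness of $v$. Only after this step does the paper run the excursion argument (essentially your fallback) against the independent surrogate $X$. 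Your Green's-function route would be a legitimate alternative at that stage, but without the decoupling it does not get started in the overlapping case $q\le t_n'$. (When $q>t_n'$ the time windows are disjoint, your independence claim is correct, and the paper says the proof is then identical.)
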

\begin{proof}
Recall that $W_1$ and $W_2$ are two cyclic walks of length $t_n'=t_n/n^4\le \beta /2$, starting from $0$ and $u$ at the cyclic times $0$ and $q$.

We will begin with the case that $q\leq t_n'$.  We let $Y_v$ be the cyclic walk started from $v\in \Z^d$ at time $0$.  Then $W_1$ corresponds to $Y_0$ while $W_2$ must correspond to $Y_v$ for some $v\in \Z^d$. Let
\[
\fP^*(v) =\{s_2\leq 2 t_n' : \exists s_1 \in[0, 2t_n'], \ |Y_v(s_2) - Y_0(s_1)|\leq 1.9^n\}.
\]
Recall that $Y_v$ are simple random walks in this time scale. On the event $\Omega$ we have
\[
\big| \fP_n(0,u,0,q) \big| \leq \max_{v\neq 0} \big|\fP^*(v)\big|.
\]

For all $v\in \mathbb Z ^d$ with $\|v\|\ge 4^n$, we have that 
\begin{equation}
    \mathbb P (\fP^*(v) \neq  \emptyset ) \le \mathbb P \Big( \sup _{s\le 2t_n'} \|Y_0(s)\| \ge \|v\|/3 \Big) +\mathbb P \Big( \sup _{s\le 2t_n'} \|Y_v(s)-v\| \ge \|v\|/3 \Big) \le \exp (-c\|v\|),
\end{equation}
where the last inequality follows from standard random walk estimates. Thus, the event
\begin{equation}
    \mathcal A := \big\{ \forall v\in \mathbb Z ^d \text{ with } \|v\|\ge 4^n \text{ we have } \fP^*(v) =  \emptyset\big\}
\end{equation}
satisfies $\mathbb P (\mathcal A ) \ge 1-\exp (-c4^n)$.

Next, fix $v$ such that $\|v\|\le 4^{n}$. Consider the evolution of $Y_0(s)-Y_v(s)$ for $s\in[0,2t_n']$.  It is a Markov process with jump rate identical to a continuous time random walk with jump rate doubled except when $\|Y_0(s)-Y_v(s)\|=1$.  Since simple random walk is transient, in each unit interval of time there is a positive probability $\|Y_0(s)-Y_v(s)\|$ becomes at least 2 and then never returns to 1.  Hence
\[
\P\Big( |\{i\in[0,2t_n]\cap\Z: \min_{s\in[i,i+1)} \|Y_0(s)-Y_v(s)\|=1 \}| \geq n^4 \Big) \leq e^{-cn^4}.
\]
We can couple the two walks to independent random walks outside of these intervals.
In particular, we can couple $Y_v(s)$ with a simple random walks $X(s)$ which is independent of $Y_0(s)$ such that the event
\[
\cD=\Big\{ \sup_{s\in[0,2t_n]} \|Y_v(s) - X(s)\| \leq n^{10} \Big\}
\]
holds with very high probability, $\P(\cD) \geq 1- e^{-cn^4}$. 

We now follow the proof of Lemma~\ref{lem:pair induction step}. Let $V:=\big\{ Y_0(s) : s\in [0,2t_n']  \big\}$ be the vertices visited by the first walk. Let $\mathcal F _t ^+$ be the sigma algebra generated by the first walk up to time $2t_n'$ and the second walk up to time $t$. Define the following sequence of stopping times. Let $\zeta _i=0$ and 
\begin{equation}
    \zeta _i: = \inf \big\{ t\ge \zeta _i +3.89, \  d( X(t), V) \le 1.9^{n+1}\big\}.
\end{equation}
Next, define the event
\begin{equation}
\cC=\Big\{\sup_{t\in [0,2t_n']}\sup_{s\in[t-(3.6)^n,t]}\|Y_0(t)-Y_0(s)\| \leq 1.9^n\Big\}\\
\end{equation}
and note that by standard random walk estimates (using that $1.9^2> 3.6$) we have that $\P(\cC) \geq 1-e^{-n^4}$.

Now, by the same arguments as in the proof of Lemma~\ref{lem:pair induction step}, on the event $\mathcal C$ we have that
\begin{equation}
    \mathbb P \big(  \zeta _i\ge 2t_n' \mid \mathcal F _{\zeta _{i-1}}^+ \big) \ge 1/2.
\end{equation}
Thus, letting $J:=\inf \big\{ i : \zeta _i \ge 2t_n' \big\}$ we have $ \mathbb P \big( J\ge n^4 \big) \le e^{-cn^4}$. On the complement of this event and the event $\mathcal D$ we can bound
\begin{equation}
    |\mathcal B ^*(v)| \le J\cdot 3.89^n \le 3.9^n
\end{equation}

Taking a union bound over $v$ we have that
\[
\P \big(\big| \fP_n (0,u,0,q) \big|\geq 3.9^n \big) \leq \P(\cA) + \sum_{\|v\|\leq 4^n}\P \big( \big|\fP^*(v)\big|\geq 3.9^n \big) \leq e^{-cn^4}
\]
which completes the proof when $q\le t_n'$. If $t_n'<q<\beta /2$ then the walks $W_1$ and $W_2$ are independent and the proof is identical.
\end{proof}

\section{Brownian Estimates}\label{sec:brownian}

The following is a standard application of the Optional Stopping Theorem  and the fact that $|B(t)|^{2-d}$ is a local martingale.
\begin{lem}\label{l:BM.ball.hit}
Let $D$ be a ball of radius $r$ centred at $x$ and let $B(t)$ be Brownian motion started from $y$ such that $|v-u| >r$.  Then the probability that $B(t)$ ever hits $D$ is $\big( \frac{r}{|x-y|} \big) ^{d-2}$.
\end{lem}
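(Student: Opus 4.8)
## Proof proposal for Lemma~\ref{l:BM.ball.hit}

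The plan is to reduce the statement to a one–dimensional computation via the radial process and the harmonicity of $x\mapsto |x|^{2-d}$ in $\mathbb{R}^d\setminus\{0\}$ for $d\ge 3$. After translating so that the ball $D$ is centred at the origin, write $r_0:=|x-y|>r$ for the initial distance of the Brownian motion $B$ (started from $y$) to the centre of $D$. The key observation is that the function $h(z):=|z|^{2-d}$ is harmonic on $\mathbb{R}^d\setminus\{0\}$, hence $h(B(t))$ is a local martingale up to the hitting time of the origin; since $B$ never hits a point in $d\ge 3$, $h(B(t\wedge \sigma))$ is in fact a bounded local martingale on the event that $B$ stays in an annulus, which is where Optional Stopping will be applied.

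First I would fix a large radius $R>r_0$ and define the stopping times $\sigma_r:=\inf\{t:|B(t)|=r\}$ and $\sigma_R:=\inf\{t:|B(t)|=R\}$. On $[0,\sigma_r\wedge\sigma_R]$ the process $|B(t)|$ is bounded between $r$ and $R$, so $h(B(t\wedge\sigma_r\wedge\sigma_R))$ is a bounded martingale; note $\sigma_r\wedge\sigma_R<\infty$ a.s.\ since one-dimensional Bessel-type radial motion exits any bounded annulus in finite time. By the Optional Stopping Theorem,
\[
r_0^{2-d}=h(y)=\mathbb{E}\big[h(B(\sigma_r\wedge\sigma_R))\big]=r^{2-d}\,\mathbb{P}(\sigma_r<\sigma_R)+R^{2-d}\,\big(1-\mathbb{P}(\sigma_r<\sigma_R)\big).
\]
Solving gives
\[
\mathbb{P}(\sigma_r<\sigma_R)=\frac{r_0^{2-d}-R^{2-d}}{r^{2-d}-R^{2-d}}.
\]
Then I would let $R\to\infty$; since $d\ge 3$ we have $R^{2-d}\to 0$, so $\mathbb{P}(\sigma_r<\sigma_R)\to (r/r_0)^{d-2}$. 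Finally, $\{\sigma_r<\infty\}=\bigcup_R\{\sigma_r<\sigma_R\}$ up to a null set (monotone limit, using transience of $B$ in $d\ge 3$ so $|B(t)|\to\infty$ and hence $\sigma_R\to\infty$ a.s.), giving $\mathbb{P}(B$ ever hits $D)=\mathbb{P}(\sigma_r<\infty)=(r/|x-y|)^{d-2}$, which is the claim (the boundary of $D$ is hit iff the closed ball is hit, by continuity of paths).

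The only genuinely delicate point is justifying the interchange of the limit $R\to\infty$ with the probability and confirming that "hitting the ball $D$" coincides with "$\sigma_r<\infty$": both rest on transience of $d$-dimensional Brownian motion for $d\ge 3$, which guarantees $|B(t)|\to\infty$ almost surely, so $\sigma_R\uparrow\infty$ and the events $\{\sigma_r<\sigma_R\}$ increase to $\{\sigma_r<\infty\}$. Everything else is the standard Optional Stopping computation with the harmonic function $|z|^{2-d}$, so I would state those steps briefly and spend a sentence on the transience argument. (Strictly, the statement as written gives the hitting probability as an equality $\left(\tfrac{r}{|x-y|}\right)^{d-2}$, which is exactly what the limit yields; one should also note this is $\le 1$ precisely because $|x-y|>r$.)
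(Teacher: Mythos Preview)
Your proof is correct and follows exactly the approach the paper indicates: the paper does not write out a proof but simply states that the lemma is ``a standard application of the Optional Stopping Theorem and the fact that $|B(t)|^{2-d}$ is a local martingale,'' which is precisely what you do. Your additional care about the limit $R\to\infty$ via transience is appropriate and fills in the details the paper omits.
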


\begin{lem}\label{l:BM.ball.exit}
Let $B(t)$ be Brownian motion in $\R^d$.  Then there exists $C_d>0$ such for $0<r\leq s$ and any starting point $y\in \R^d$,
\[
\P\Big( \inf_{t\geq s^2} \|y+B(t)\| \leq r \Big) \leq C_d (r/s)^{d-2}.
\]
\end{lem}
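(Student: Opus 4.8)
The plan is to reduce the statement to the previous lemma, Lemma~\ref{l:BM.ball.hit}, by splitting on the position of the Brownian motion at time $s^2$. First I would observe that by Brownian scaling it suffices to treat the case $s=1$: replacing $B(t)$ by $s^{-1}B(s^2 t)$ and $r$ by $r/s$ turns the event $\{\inf_{t\ge s^2}\|y+B(t)\|\le r\}$ into $\{\inf_{t\ge 1}\|y'+\tilde B(t)\|\le r/s\}$ with $y'=y/s$, so we must show $\P[\inf_{t\ge 1}\|y'+\tilde B(t)\|\le \rho]\le C_d\rho^{d-2}$ uniformly in $y'$, where $\rho=r/s\le 1$.

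Next I would condition on $Z:=y'+\tilde B(1)$, which is a Gaussian vector with mean $y'$ and identity covariance. Given $Z=z$, the future $\{y'+\tilde B(t)\}_{t\ge 1}$ is a Brownian motion started from $z$, and by Lemma~\ref{l:BM.ball.hit} the probability it ever enters the ball $D$ of radius $\rho$ about the origin is $\min\{1,(\rho/\|z\|)^{d-2}\}$ (the minimum accounting for the case $\|z\|\le\rho$, where the probability is trivially at most $1$). Hence
\[
\P\Big[\inf_{t\ge 1}\|y'+\tilde B(t)\|\le \rho\Big] \le \E\Big[\min\big\{1,(\rho/\|Z\|)^{d-2}\big\}\Big].
\]
Now I would bound this expectation by a quantity independent of $y'$: the worst case is $y'=0$, since shifting the mean of a Gaussian away from the origin only decreases $\P[\|Z\|\le t]$ for every $t$ (this is a standard fact — the density of $\|Z\|$ is pointwise maximized over the mean at mean zero, or one can simply use that $\|Z\|\ge \|N\|$ stochastically is false in general but the needed tail bound $\P[\|Z\|\le t]\le \P[\|N\|\le t]$ for centered $N$ does hold by a Anderson-type inequality / direct computation). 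Taking $y'=0$, $Z=N$ is a standard Gaussian, and
\[
\E\big[\min\{1,(\rho/\|N\|)^{d-2}\}\big] \le \P[\|N\|\le \rho] + \rho^{d-2}\,\E\big[\|N\|^{-(d-2)}\mathds 1_{\|N\|>\rho}\big].
\]
The first term is at most $c_d\rho^d\le c_d\rho^{d-2}$ by the Gaussian density being bounded near the origin, and for the second term, since $d\ge 5$ we have $d-2\ge 3$, but more importantly $\E[\|N\|^{-(d-2)}]<\infty$ precisely when $d-2<d$, i.e.\ always — the singularity $\|N\|^{-(d-2)}$ is integrable against the $d$-dimensional Gaussian since $d-(d-2)=2>0$. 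Thus $\E[\|N\|^{-(d-2)}\mathds 1_{\|N\|>\rho}]\le \E[\|N\|^{-(d-2)}]=:C_d'<\infty$, giving the bound $C_d\rho^{d-2}$ as claimed.

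The only mild subtlety — and the step I would be most careful about — is the reduction to $y'=0$, i.e.\ justifying that the centered Gaussian is the worst case for the functional $\E[\min\{1,(\rho/\|Z\|)^{d-2}\}]$. If one prefers to avoid Anderson's inequality, an alternative is to split into $\|y'\|\le 2$ and $\|y'\|>2$: in the first regime one bounds crudely using that $Z$ has bounded density on any fixed ball; in the second regime one uses Lemma~\ref{l:BM.ball.hit} directly from the starting point $y'$ together with a Gaussian deviation estimate showing $\|Z\|\ge \|y'\|/2$ except on an event of probability $\le e^{-c\|y'\|^2}\le (r/s)^{d-2}\cdot(\text{const})$ when $\|y'\|$ is large — though this last comparison needs care and is cleaner via the Anderson route. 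Either way the calculation is routine once the conditioning-on-$B(s^2)$ structure is in place.
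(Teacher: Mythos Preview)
Your proof is correct and follows essentially the same approach as the paper: condition on the position at time $s^2$, apply Lemma~\ref{l:BM.ball.hit}, reduce to the centered case $y=0$, and compute the resulting Gaussian integral. The paper does the scaling after conditioning rather than before, and simply asserts ``expectation being maximized at $y=0$'' for the step you discuss at length via Anderson's inequality, but the structure is identical.
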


\begin{proof}
By Lemma~\ref{l:BM.ball.hit} we have that
\begin{align*}
\P\Big( \inf_{t\geq s^2} \|B(t)\| \leq r \Big) &= \E\bigg[\big(\frac{r}{\|y+B(s^2)\|}\big)^{d-2} \wedge 1 \bigg]\leq \E\bigg[\big(\frac{r}{\|B(s^2)\|}\big)^{d-2} \wedge 1 \bigg]\\
&= \E\bigg[\big(\frac{r/s}{\|B(1)\|}\big)^{d-2} \wedge 1 \bigg]= C\int_{0}^\infty x^{d-1} (r/xs\wedge 1)^{d-2} e^{-x^2/2} dx\\
&\leq C \int_0^{r/s} x^{d-1} e^{-x^2/2} dx + C (r/s)^{d-2}\int_{r/s}^\infty x  e^{-x^2/2} dx \leq C_d (r/s)^{d-2},
\end{align*}
where the first inequality follows from expectation being maximized at $y=0$, the second inequality by Brownian scaling and the third equality follows by the radial symmetry of the Gaussian density.
\end{proof}

\begin{lem}\label{l:heavy.BM.ball}
Let $D$ be a ball of radius $r$ and let $N$ be the total number of integer times Brownian motion is inside $D$.  Then for some $c>0$,
\[
\P( N > x r^2) \leq 2\exp(-cx).
\]
\end{lem}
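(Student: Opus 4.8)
The plan is to dominate the occupation time of the ball $D$ by a geometric-type sum and then apply a standard exponential tail bound. First I would reduce to the case where $D$ is centred at the origin; by translation invariance of Brownian increments this changes nothing, though one must be slightly careful that the starting point of $B$ is arbitrary, so the bound should hold uniformly in the starting point (which the statement implicitly requires, since $N$ depends on the starting point). By Brownian scaling $B(t)\mapsto r^{-1}B(r^2 t)$ it suffices to treat $r=1$: the number of integer times in $[0,\infty)$ that $B$ visits a ball of radius $r$ is comparable (up to a factor depending only on $d$, coming from the mismatch between the integer lattice of times and the rescaled lattice $r^2\Z$) to the number of times in $r^2\Z$ that the rescaled Brownian motion visits the unit ball. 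So it is enough to show that if $N_0$ counts the times $s\in\Z_{\ge 0}$ with $\|B(s)\|\le 2$ (say, to absorb the rescaling slack), then $\P[N_0>x]\le 2e^{-cx}$, uniformly in the starting point $y$.

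Next I would set up the renewal/geometric structure. Let $T_0=\inf\{k\in\Z_{\ge0}:\|B(k)\|\le 2\}$ and inductively $T_{i+1}=\inf\{k\in\Z_{>T_i}:\|B(k)\|\le 2\}$, with the convention $T_i=\infty$ once the process stops returning. The key quantitative input is a lower bound on the escape probability: there is $p=p(d)>0$ such that for \emph{any} starting point $z$ with $\|z\|\le 2$,
\[
\P\big[\,\|B(k)\|>2 \text{ for all integers } k\ge 1\,\mid B(0)=z\,\big]\ge p.
\]
This follows from Lemma~\ref{l:BM.ball.hit}: from a point at distance, say, $10$ from the origin, the probability of ever hitting the ball of radius $3$ is $(3/10)^{d-2}<1$; and from any $z$ with $\|z\|\le 2$, with probability bounded below the Brownian motion reaches distance $10$ by time $1$ while staying outside radius $2$ at the intermediate integer time $\ldots$ actually the cleanest route is: with probability $\ge c_1$ we have $\|B(1)\|\ge 10$, and then conditionally on that, by Lemma~\ref{l:BM.ball.hit} and the strong Markov property the probability of never afterwards entering the radius-$2$ ball is $\ge 1-(2/10)^{d-2}\wedge$ (a bound on hitting before the relevant integer times), which is bounded below by some $c_2>0$. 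Combining, the escape probability from any $z\in \bar B(0,2)$ is at least $p:=c_1c_2>0$. Hence, by the strong Markov property applied at each $T_i$, the sequence of ``further returns'' is dominated by a geometric random variable: $\P[N_0\ge m+1\mid N_0\ge 1]\le (1-p)^{m}$, and more precisely $\P[N_0\ge m]\le (1-p)^{m-1}$ for all $m\ge 1$ regardless of the starting point (when the start is outside $\bar B(0,2)$ we simply have $N_0$ stochastically smaller). Therefore $\P[N_0>x]\le (1-p)^{\lfloor x\rfloor}\le 2e^{-cx}$ with $c=-\log(1-p)$, possibly after adjusting the constant $2$ to cover small $x$.

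Finally I would undo the rescaling: the original $N$ for a radius-$r$ ball and integer times is at most $C_d$ copies of $N_0$-type quantities (one can cover the integer times by $O(1)$ shifted copies of the rescaled integer grid, or simply note $N\le$ (number of $s\in r^2\Z_{\ge0}$ with dist to $D$ at most $r$) $\times\,(r^2+1)$ is too lossy, so instead I would run the renewal argument directly at the scale $r^2$: let the return times be the successive integers $k$ with $\|B(k)\|\le 2r$, use Brownian scaling inside a single ``excursion window'' of length $r^2$ to get an escape probability $p(d)>0$ \emph{uniform in $r\ge1$}, and conclude $\P[N>m]\le(1-p)^{m}$ exactly as before, then substitute $m=xr^2$). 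I expect the main obstacle to be the bookkeeping in this last step — making the escape-probability lower bound genuinely uniform in the radius $r$, since the event ``never returns to $D$ at an \emph{integer} time'' is not scale invariant the way ``never returns at all'' is; the fix is to demand escape to distance $\gg r$ within one unit of time (trivial Gaussian estimate) and then invoke Lemma~\ref{l:BM.ball.hit} for the never-return-afterwards event, which is scale invariant. Everything else is routine.
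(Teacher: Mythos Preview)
Your overall strategy---dominate $N$ by a geometric variable via a uniform escape probability---is exactly the paper's. The gap is in your proposed ``fix'' at the end: the event ``escape to distance $\gg r$ within one unit of time'' is \emph{not} a trivial Gaussian estimate when $r$ is large. From a point inside $D$, $\P(\|B(1)-B(0)\|\ge 10r)\le e^{-cr^2}$, so this probability is \emph{not} bounded below uniformly in $r$. Consequently your claimed bound $\P[N>m]\le(1-p)^m$ (which after substituting $m=xr^2$ would give the far-too-strong $\P[N>xr^2]\le e^{-cxr^2}$) is false in general.

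The correct version, which is what the paper does, is to wait time $Cr^2$ rather than time $1$: by Lemma~\ref{l:BM.ball.exit} with $s^2=Cr^2$, from \emph{any} starting point one has $\P\big(\inf_{t\ge Cr^2}\|y+B(t)\|\le r\big)\le C_d C^{-(d-2)/2}$, which is at most $1/2$ for $C$ large enough, uniformly in $r$. So the geometric ``trial'' is not a single integer time but a block of $\lceil Cr^2\rceil$ integer times; each block contributes at most $Cr^2$ to $N$, and after each block there is probability $\ge 1/2$ of never returning. Hence $\P[N>xr^2]\le\P[\mathrm{Geom}(1/2)>x/C]\le 2e^{-cx}$, which is exactly the stated bound and explains why the $r^2$ appears on the left side rather than a bare $x$.
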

\begin{proof}
For any starting point, by Lemma~\ref{l:BM.ball.exit} there is a constant probability, not depending on $r$, that the Brownian motion never returns to $D$ after time $Cr^2$. The result then follows by repeated trials.
\end{proof}

\begin{claim}\label{claim:brownian} 
Let $d\ge 5$ and let $B$ be a standard $d$ dimensional Brownian motion. Let $\zeta >0$ and let $t\ge 1$. Define the set 
\begin{equation}
    T:=\big\{ s\le t : \exists s'\le s-t^{\zeta } \text{ with } \|B(s)-B(s')\| \le 1 \big\}.
\end{equation}
We have that $\mathbb P \big( |T|\ge t^{1-\zeta^3} \big) \le C\exp  (-t^{c})$ where $c$ depends on $\zeta $.
\end{claim}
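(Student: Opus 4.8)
We may assume throughout that $0<\zeta<1$, since for $\zeta\ge 1$ no $s\le t$ admits an admissible $s'$ and $|T|=0$; we also assume $t$ is large, the claim being vacuous for bounded $t$ after enlarging $C$. Cover $\R^d$ by the balls $Q_z:=\{x:\|x-z\|\le 2\sqrt d\}$, $z\in\Z^d$; every point lies in at most $C_d$ of them and any two points at distance $\le 1$ lie in a common $Q_z$. For a ball $Q$ write $\sigma_Q$ and $\tau_Q$ for the first and last times in $[0,t]$ at which $B$ visits $Q$, write $V_Q:=|\{s\le t:B(s)\in Q\}|$ for its occupation time, let $\mathcal V$ be the (random) set of visited balls, and set
\[
\mathcal Q^\ast:=\{Q\in\mathcal V:\tau_Q-\sigma_Q\ge t^\zeta\}.
\]
If $s\in T$ with witness $s'\le s-t^\zeta$, then $B(s)$ and $B(s')$ lie in a common ball $Q$, so $\sigma_Q\le s'$ and $\tau_Q\ge s$, hence $Q\in\mathcal Q^\ast$. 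Therefore $T\subseteq\bigcup_{Q\in\mathcal Q^\ast}\{s:B(s)\in Q\}$ and $|T|\le\sum_{Q\in\mathcal Q^\ast}V_Q$. The plan is to bound this sum through three estimates, on whose intersection (the ``good event'', of probability $\ge 1-C\exp(-t^{c})$) the claim will follow; fix first a small $\delta=\delta(\zeta)>0$ to be chosen at the end.

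The three estimates are as follows. \emph{(i) Few balls:} $\P(|\mathcal V|>C_d t)\le\exp(-t^{c})$. Indeed $|\mathcal V|\le\sum_{k=0}^{\lceil t\rceil-1}D_k$, where $D_k$ is the number of balls visited during $[k,k+1]$; the $D_k$ are i.i.d.\ (they depend only on the increments $B(k+\cdot)-B(k)$) with $\E D_k\le C_d$ and stretched‑exponential tails coming from the Gaussian tail of the oscillation of $B$ over a unit interval, so a truncation together with Bernstein's inequality gives the bound. \emph{(ii) Light balls:} by Lemma~\ref{l:heavy.BM.ball} (applied to a slightly enlarged ball, to pass from integer times to continuous occupation) $\P(V_Q>t^{\delta})\le C\exp(-ct^{\delta})$ uniformly in $Q$; a union bound over the at most $(Ct)^d$ balls that can be visited on the event $\{\sup_{s\le t}\|B(s)\|\le t\}$ (whose complement has probability $\le e^{-ct}$) then gives $\P(\exists Q\in\mathcal V:V_Q>t^{\delta})\le\exp(-t^{c})$. \emph{(iii) Rare revisits:} enumerate $\mathcal V=\{Q_1,Q_2,\dots\}$ in increasing order of $\sigma_{Q_j}$. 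Since $\sigma_{Q_j}$ is a stopping time and $B(\sigma_{Q_j})\in Q_j$, the strong Markov property and Lemma~\ref{l:BM.ball.exit} (with $s=t^{\zeta/2}$ and $r=4\sqrt d$, legitimate for $t$ large) give, with $B'$ a fresh Brownian motion,
\[
\P\big(Q_j\in\mathcal Q^\ast\mid\mathcal F_{\sigma_{Q_j}}\big)\le\P\Big(\inf_{u\ge t^\zeta}\|B'(u)\|\le 4\sqrt d\Big)\le C_d\,t^{-\zeta(d-2)/2}=:p,
\]
and here $d\ge 5$ enters decisively through $\zeta(d-2)/2\ge\tfrac32\zeta$.

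Combining the pieces: on the good event $|T|\le t^{\delta}|\mathcal Q^\ast|$, and $|\mathcal Q^\ast|=\sum_{j\le|\mathcal V|}\mathds 1_{\{Q_j\in\mathcal Q^\ast\}}$ is, up to the exceptional event $\{|\mathcal V|>C_d t\}$ of estimate (i), stochastically dominated by $\mathrm{Bin}(C_d t,p)$ (a sum of $C_d t$ indicators each conditionally at most $p$). Choose $\delta>0$ small enough that $1-\zeta^3-\delta>0$ and $\delta<\tfrac32\zeta-\zeta^3$; this is possible because $\tfrac32\zeta-\zeta^3=\zeta(\tfrac32-\zeta^2)>0$ for $\zeta<1$. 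Set $m:=t^{1-\zeta^3-\delta}$. Since $C_d tp\le C t^{1-\zeta(d-2)/2}\le C t^{1-3\zeta/2}$, we have $C_d tp/m\le Ct^{-c'}$ with $c'=\tfrac32\zeta-\zeta^3-\delta>0$, and the elementary bound $\P(\mathrm{Bin}(N,p)\ge m)\le(eNp/m)^m$ yields
\[
\P\big(\mathrm{Bin}(C_d t,p)\ge m\big)\le\big(Ct^{-c'}\big)^m\le\exp\!\big(-t^{1-\zeta^3-\delta}\big)
\]
for $t$ large. Hence on the good event $|T|\le t^{\delta}m=t^{1-\zeta^3}$, while each exceptional event has probability $\le C\exp(-t^{c})$, which is the claim.

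The one genuine difficulty is that $T$ depends on the \emph{entire} past of $B$, so one cannot split $|T|$ into unit‑interval contributions and run a Chernoff bound: given $\mathcal F_i$, the probability that $B$ stays near its past during $[i,i+1]$ is essentially the indicator of ``$B(i)$ is near its past'', not a small number, and the contributions are strongly dependent. The device above circumvents this by \emph{charging} each $s\in T$ to the covering ball containing $B(s)$ and estimating the decisive event ``$Q$ is revisited $t^\zeta$ later'' \emph{at the first visit of $Q$}, where Lemma~\ref{l:BM.ball.exit} makes it conditionally of probability only $t^{-\zeta(d-2)/2}$ — this is exactly where transience in $d\ge 5$, i.e.\ $(d-2)/2>1$, is used — after which occupation times are absorbed by Lemma~\ref{l:heavy.BM.ball} and the number of balls one must charge to is only $O(t)$. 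The remaining steps (the i.i.d.\ bound on $|\mathcal V|$, the continuous‑versus‑discrete‑time comparison in estimate (ii), and the exponent arithmetic) are routine.
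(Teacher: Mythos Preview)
Your decomposition is natural and the estimates (i) and (ii) are fine, but the step from (iii) to ``$|\mathcal Q^\ast|$ is stochastically dominated by $\mathrm{Bin}(C_d t,p)$'' is not justified, and in fact fails. The bound $\P(Q_j\in\mathcal Q^\ast\mid\mathcal F_{\sigma_{Q_j}})\le p$ conditions on the filtration at the \emph{first} visit of $Q_j$, but the event $\{Q_j\in\mathcal Q^\ast\}$ itself depends on the entire future of the path (whether $Q_j$ is revisited after a gap of $t^\zeta$), so the indicators $\mathds 1_{\{Q_j\in\mathcal Q^\ast\}}$ are not adapted to the filtration $(\mathcal F_{\sigma_{Q_j}})_j$. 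Without adaptedness, the conditional bound does not yield binomial domination. Concretely: if the Brownian motion visits many neighbouring balls in some region and then, after time $t^\zeta$, returns to that region, all those balls enter $\mathcal Q^\ast$ simultaneously; the events are strongly positively correlated, so the tail of $|\mathcal Q^\ast|$ is much heavier than binomial. What your argument does establish is $\E|\mathcal Q^\ast|\le C_dtp$, which via Markov gives only $\P(|\mathcal Q^\ast|\ge m)\le Ct^{-c'}$ --- polynomial, not the required stretched exponential $\exp(-t^c)$.

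The paper's proof avoids this trap by working in time blocks rather than space balls and, crucially, by first controlling the \emph{density} of the past trajectory: it introduces a ``heavy block'' stopping time $\tau_{\rm hea}$ (no box of side $r^{1+\zeta}$ contains more than $r^{2.1}$ visited points) and shows $\P(\tau_{\rm hea}\le t)\le C\exp(-t^c)$. Once the past is known to be sparse near the current position, the event $\mathcal A_j$ that the walk approaches its past during the $j$th time block $[s_j,s_{j+1}]$ has $\P(\mathcal A_j\mid\mathcal F_{s_j})\le r^{-1/2}$, and now these events \emph{are} adapted (each $\mathcal A_j$ is $\mathcal F_{s_{j+1}}$-measurable), so the count of bad blocks genuinely concentrates. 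The density control is precisely the missing ingredient that turns a first-moment estimate into an exponential tail bound; your ball-by-ball charging scheme cannot sidestep it.
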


\begin{proof}
Without loss of generality, we may assume that $d=5$. Indeed, the set $T$ grows when removing the last $d-5$ coordinates. Let $r=t^{\zeta /2}$. First, we claim that for all $x\in \mathbb R^5$ we have
\begin{equation}\label{eq:15}
    \mathbb P \big( \big|\big\{ i \in \mathbb N \cap [0,t] : \|B(i)-x\| \le 2r^{1+\zeta } \big\}\big| \ge r^{2+3\zeta  } \big) \le \exp (-Ct^{\zeta}).
\end{equation}
Indeed, every time the Brownian motion is inside a ball of radius $2dr^{1+\zeta }$ around $x$ by Lemma~\ref{l:BM.ball.exit} there probability $\theta>0$ to escape the ball in time less than $r^{2+2\zeta}$ and never return. The estimate then follows from repeated trials.

Next, we say that the block $A_x:=x+[-r^{1+\zeta  },r^{1+\zeta }]^5 \subseteq \mathbb R^5$ is  heavy at time $s$ if 
\begin{equation}
    \big| \big\{ y\in A_x \cap \Z^d: \exists s'\le s \text{ with }\|B(s')-y\| \le r^{1/20 } \big\} \big| \ge  r^{2.1}
\end{equation}
and define the stopping time
\begin{equation}
    \tau _{\text{hea}}:= \inf \big\{ s>0 : \text{ for some }x\in \mathbb Z ^5 \text{ the block } A_x \text{ is heavy at time }s  \big\}.
\end{equation}
We would like to show that it is unlikely that $\tau _{\text{hea}} \le t$. To this end, note that 
\begin{equation}\label{eq:16}
    \mathbb P \big( \exists s,s'\le t \text{ with } |s-s'|\le 1 \text{ such that } \|B(s)-B(s')\| \ge r^{1/20} \big) 
    \le C\exp (-c t^\zeta).
\end{equation}
Moreover, if both the events in \eqref{eq:15} and \eqref{eq:16} do not hold then 
\begin{equation}
    \big| \big\{ y\in A_x \cap \Z^d : \exists s\le t \text{ with }\|B(s)-y\| \le r^{\zeta } \big\} \big| \le r^{2+3\zeta } (2r^{1/20 })^5  <   r^{2.1}
\end{equation}
and therefore $\mathbb P (A_x \text{ is heavy at time }t) \le C\exp (-t^c)$. We obtain that 
\begin{equation}
    \mathbb P (\tau _{\text{hea}} \le t ) \le \mathbb P \big( \sup _{s\le t} \|B(s)\| \ge t \big) +  \sum _{\|x\|\le 2t} \mathbb P \big( A_x \text{ is heavy at time }t \big)  \le C\exp (-t^c).
\end{equation}

Next, for any $j\le \lfloor t^{1-\zeta } \rfloor$ let $s_j:=jt^{\zeta }=jr^2$ and define the event
\begin{equation}
    \mathcal A _j : =\big\{  \exists s \in [s_j+r^{2-2\zeta },s_{j+1}] \text{ and }s'\le s_j \text{ with } \|B(s)-B(s')\|\le 1  \big\}.
\end{equation}
We claim that on the event $\{ \tau _{\text{hea}} \ge  s_j \}$ we have that $ \mathbb P \big( \mathcal A _j   \ |  \  \mathcal F _{s_j} \big) \le r^{-1/2}$.
Indeed, for any fixed $s\in [s_j+r^{2-2\zeta },s_{j+1}]$ the density of $B(s)-B(s_j)$ is bounded by $Cr^{-5(1-\zeta )}$ and therefore 
\begin{equation}\label{eq:17}
    \mathbb P \big( \exists s'\le s_j \text{ with } \|B(s)-B(s')\|\le r^{\zeta }   \ |  \  \mathcal F _{s_j} \big) \le Cr^{2.1} r^{-5(1-\zeta ) } \le r^{-2.8},
\end{equation}
where in here we also used that the block $A_x$ is not heavy where $x\in \mathbb Z ^d$ is the closest integer point to $W(s_j)$ and that $W(s)\in A_x$ with very high probability. We now union bound \eqref{eq:17} over integers $s\in [s_j+r^{2-2\zeta },s_{j+1}]$ and use \eqref{eq:16} to obtain  $ \mathbb P ( \mathcal A _j \mid  \mathcal F _{s_j} ) \le r^{-1/2}$.

It follows that 
\begin{equation}
    \mathbb P \big( \big| \big\{ j\le \lfloor t^{1-\zeta } \rfloor : \mathcal A _j \text{ holds} \big\} \big|  \ge t^{1-1.1\zeta}  \big) \le C\exp (-t^c).
\end{equation}
On the complement of the last event we have that
\begin{equation}
    |T|\le r^{2-2\zeta } \big| \big\{ j\le \lfloor t^{1-\zeta } \rfloor : \mathcal A _j \text{ doesn't hold} \big\} \big| + r^2\big| \big\{ j\le \lfloor t^{1-\zeta } \rfloor : \mathcal A _j \text{ holds} \big\} \big| \le t^{1-\zeta ^3}.
\end{equation}
This finishes the proof of the claim.
\end{proof}

\bibliography{Interchange}
\bibliographystyle{abbrv}

\end{document}